\documentclass[preprint]{imsart}
\RequirePackage[OT1]{fontenc}
\usepackage{amsthm,amsmath}
\RequirePackage[numbers]{natbib}
\RequirePackage[colorlinks,citecolor=blue,urlcolor=blue]{hyperref}
\usepackage{amsfonts} \usepackage{amsmath} \usepackage{amssymb}
\usepackage{amsthm}
\usepackage{mathabx}
\usepackage{tabularx}
\usepackage{caption}
\usepackage{rotating}
\usepackage{graphicx} \usepackage{enumerate} \usepackage{multicol}
\usepackage{mathrsfs} \usepackage{xypic}
\usepackage{verbatim,enumerate}

\usepackage{geometry,icomma,color,multirow}
\geometry{top=2.5cm}
\geometry{bottom=2.5cm}
\geometry{left=2.5cm}

\DeclareMathAlphabet{\mathpzc}{OT1}{pzc}{m}{it}

\newcounter{dummy} \numberwithin{dummy}{section}
\newtheorem{theorem}[dummy]{Theorem}
\newtheorem{corollary}[dummy]{Corollary}
\newtheorem{lemma}[dummy]{Lemma}
\newtheorem{definition}[dummy]{Definition}
\newtheorem{proposition}[dummy]{Proposition}
\theoremstyle{remark}
\newtheorem{remark}[dummy]{Remark}
\newtheorem{example}[dummy]{Example}

\DeclareFontFamily{U}{mathx}{\hyphenchar\font45}
\DeclareFontShape{U}{mathx}{m}{n}{
      <5> <6> <7> <8> <9> <10>
      <10.95> <12> <14.4> <17.28> <20.74> <24.88>
      mathx10
      }{}
\DeclareSymbolFont{mathx}{U}{mathx}{m}{n}
\DeclareFontSubstitution{U}{mathx}{m}{n}
\DeclareMathAccent{\widecheck}{0}{mathx}{"71}
\DeclareMathAccent{\wideparen}{0}{mathx}{"75}


\DeclareMathOperator{\E}{\mathbb E}
\DeclareMathOperator{\p}{\mathbb P}







\newcommand\newbullet{{\kern.8pt\displaystyle\cdot\kern.8pt}}

\numberwithin{equation}{section}

\allowdisplaybreaks

\begin{document}

\title{Weak convergence on Wiener space:\\ targeting the first two chaoses}

\begin{aug}
 \author{\fnms{Christian} \snm{Krein}\ead[label=e1]{christian.krein@education.lu}}
 \address{Mathematics Research Unit\\ University of Luxembourg \\ Luxembourg\\ \printead{e1} }
\end{aug}

\runauthor{C. Krein}
\runtitle{Weak convergence on Wiener space: targeting the first two chaoses}

\begin{abstract}
\noindent 
We consider sequences of random variables living in a finite sum of Wiener chaoses. We find necessary and sufficient conditions for convergence in law to a target  variable   living in the sum of the first two Wiener chaoses. Our conditions hold notably for sequences of multiple Wiener integrals.
Malliavin calculus and in particular the $\Gamma$-operators are used. Our results extend previous findings  
by Azmoodeh, Peccati and Poly (2014) and are applied to central and non-central convergence situations. Our methods are applied as well  to investigate stable convergence. We finally  exclude certain classes of random variables as target variables for sequences living in a fixed Wiener chaos.

\end{abstract}

\begin{keyword}[class=MSC]
\kwd{60F05; 60G15; 60H07}
\end{keyword}
\begin{keyword}
\kwd{Brownian motion, Malliavin calculus, multiple Wiener integrals, weak convergence, limit theorems, cumulants, stable convergence}
\end{keyword}

\maketitle
\tableofcontents

\section{Introduction}
\subsection{Overview} \label{240516a}
The aim of this paper is to provide new criteria for non-central convergence in law  for sequences of polynomial functionals of a Brownian motion $W$. In particular, we consider the  convergence in law of a sequence of random variables $\left\lbrace F_n\right\rbrace_n$ to a target variable $X$, where:
\begin{itemize}
\item the random variables $F_n$   have a representation of the form $F_n=\sum_{p=1}^m I_p(f_{n,p})$ for a fixed $m\ge 2$, where $I_p(\cdot) $ is the Wiener integral of order $p$ with respect to the Brownian motion $W$;
\item the target  variable $X$ lives in the sum of the first two Wiener chaoses  associated with $W$ and  can be represented as:
\[X=I_1(f_1)+I_2(f_2)=a N + \sum_{i=1}^{k_1} b_i(R_i^2-1)+\sum_{i=1}^{k_2} [c_i(P_i^2-1)+d_iP_i],\]
 where all coefficients $b_i$, $c_j$ and $d_j$ are non-zero and $N , R_i,P_j$ are independent standard normal variables for $1\le i \le k_1$ and $1\le j \le k_2$.
We shall see that this representation covers in particular random variables of the form:
\[X=aU_0 + \sum_{i=1}^n \lambda_i (U_i^2-1),\] 
where all coefficients $\lambda_i$ are non-zero, $U_1,\ldots,U_n$ are independent standard normal variables and $U_0$ is a standard normal variable which may be correlated to $U_1,\ldots, U_n$.
\end{itemize}
Our main result (Theorem \ref{230416a}) gives  a necessary and sufficient criterion for the convergence in law to $X$: 

\begin{theorem} \label{230416a}
Consider $0\le k_1, k_2 < \infty$ and 
\begin{align}
X=I_1(f_1)+I_2(f_2)=a N + \sum_{i=1}^{k_1} b_i(R_i^2-1)+\sum_{i=1}^{k_2} [c_i(P_i^2-1)+d_iP_i],\label{071216a}
\end{align}
 where  $N, R_1,\ldots, R_{k_1}, P_1,\ldots,P_{k_2} \stackrel{\textrm{i.i.d.}}{\sim}\mathcal{N}(0,1)$. Suppose that at least one of the parameters $a, k_1, k_2$ is non-zero.
Consider a sequence $\left\lbrace F_n\right\rbrace_n$ of non-zero random variables such that $F_n=\sum_{i=1}^p I_i(f_{n,i})$ for $p\ge 2$ fixed and $\left\lbrace f_{n,i}\right\rbrace_n\subset H^{\odot i}$ for $1\le i \le p$. Define: \[P(x)=x^{1+1_{[a\ne 0]}} \prod_{j=1}^{k_1} (x-b_j)\prod_{j=1}^{k_2}(x-c_j)^2.\]
As $n\to \infty$, the following conditions (a) and (b) are equivalent:
\begin{enumerate}[(a)]
\item \begin{enumerate}[(1)]
\item $\displaystyle{ \kappa_r(F_n)\to \kappa_r(X),\quad\textrm{ for } r=1,\ldots,\textrm{deg}(P)}$,
\item $\displaystyle{ 
\E\left[ \left| \E\left[  \sum_{r=1}^{\textrm{deg}(P)} \frac{P^{(r)}(0)}{r!2^{r-1}} \left( \Gamma_{r-1}(F_n) - \E[\Gamma_{r-1}(F_n)] \right) \Big| F_n  \right] \right|  \right] \to 0 }$,
\end{enumerate}
\item $F_n \stackrel{\textrm{Law}}{\to} X$, as $n\to \infty$.
\end{enumerate}
\end{theorem}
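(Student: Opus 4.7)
\emph{Direction (b) $\Rightarrow$ (a).} This is the routine direction. Every $F_n$ lies in the fixed finite sum of chaoses $\bigoplus_{i=1}^{p}\mathcal{H}_i$, so hypercontractivity furnishes uniform bounds on all $L^q$-norms; weak convergence $F_n \Rightarrow X$ therefore upgrades automatically to convergence of every moment and hence of every cumulant, proving (a)(1). For (a)(2) the key structural observation is that
\[Y^{\ast}:=\sum_{r=1}^{\deg(P)}\frac{P^{(r)}(0)}{r!\,2^{r-1}}\,\Gamma_{r-1}(X)\]
is in fact \emph{deterministic}: using the spectral representation \eqref{071216a}, each $\Gamma_{r-1}(X)$ is an explicit polynomial in the independent Gaussians $N,R_i,P_j$, and the roots of $P$ — single at each $b_j$, double at each $c_j$, and with multiplicity $1+1_{[a\ne 0]}$ at $0$ — are exactly those needed to cancel every random term of $Y^{\ast}$. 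Since the $\Gamma_{r-1}(F_n)$ also sit in a finite sum of chaoses, hypercontractivity propagates $F_n\Rightarrow X$ to convergence of every joint moment of $(F_n,\Gamma_0(F_n),\ldots,\Gamma_{\deg(P)-1}(F_n))$; in particular the variance of the random combination in (a)(2) converges to $\mathrm{Var}(Y^{\ast})=0$, so the combination tends to its mean in $L^2$, and Jensen's inequality closes the conditioning step.

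\emph{Direction (a) $\Rightarrow$ (b).} This is the substantive direction, and I would adapt the $\Gamma$-calculus strategy of Azmoodeh, Peccati and Poly (2014). Since $X$ belongs to a finite sum of chaoses it is moment-determinate, so it is enough to prove $\E[\phi(F_n)]\to\E[\phi(X)]$ for every polynomial $\phi$. The engine is the Malliavin integration-by-parts identity $\E[F\phi(F)]=\E[\phi'(F)\Gamma_1(F)]$ iterated through the $\Gamma$-hierarchy. Using this recursion together with the polynomial $P$, one derives a Stein-type identity characterising the law of $X$; evaluated at $F_n$, this identity produces two sources of error: a deterministic one involving the discrepancies $\kappa_r(F_n)-\kappa_r(X)$ for $r\le\deg(P)$, which is killed by (a)(1), and a random one given by testing $\phi(F_n)$ against the $\Gamma$-combination
\[\sum_{r=1}^{\deg(P)}\frac{P^{(r)}(0)}{r!\,2^{r-1}}\bigl(\Gamma_{r-1}(F_n)-\E[\Gamma_{r-1}(F_n)]\bigr),\]
which, by the tower property, reduces to testing $\phi(F_n)$ against the conditional expectation of this combination given $F_n$ and is therefore killed by (a)(2). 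This forces $\E[\phi(F_n)] \to \E[\phi(X)]$ for every polynomial $\phi$, and the conclusion follows from moment determinacy.

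\emph{Main obstacle.} The delicate point is to verify rigorously the spectral identity $Y^{\ast}=\E[Y^{\ast}]$ and, simultaneously, to check that $P$ is \emph{exactly} the polynomial appearing in the Stein identity governing $X$. The double root at each $c_j$ and the extra factor of $x$ when $a\ne 0$ encode the correlations between the quadratic piece $c_j(P_j^2-1)$ and the linear piece $d_j P_j$ built on the \emph{same} underlying Gaussian $P_j$ (respectively between the Gaussian $N$ and the first-chaos terms $\sum_j d_j P_j$) — couplings that are absent in the purely second-chaos framework of Azmoodeh--Peccati--Poly. Carrying out this multiplicity bookkeeping is where the genuinely new algebraic input sits; once it is in place, the remainder of the proof is an orchestration of now-standard manipulations with the $\Gamma$-hierarchy and moment control.
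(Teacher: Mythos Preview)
Your sketch of (b) $\Rightarrow$ (a) is essentially the paper's argument: the key algebraic input is Lemma~\ref{200416a}, namely that $\sum_{r=1}^{\deg(P)}\frac{P^{(r)}(0)}{r!2^{r-1}}\bigl(\Gamma_{r-1}(X)-\E[\Gamma_{r-1}(X)]\bigr)=0$ a.s., and the rest is the machinery of \cite{azmoodeh} (Theorem~\ref{030416aab} just cites their proof with this lemma swapped in). One small caveat: the passage from $F_n\Rightarrow X$ to convergence of joint moments of $(F_n,\Gamma_0(F_n),\ldots)$ is not ``just hypercontractivity''; it uses the non-trivial fact (from \cite{ECP2023}, invoked inside the argument of \cite{azmoodeh}) that weak convergence inside a fixed finite sum of chaoses upgrades to joint convergence in law of $(F_n,\Gamma_r(F_n))$ towards $(X,\Gamma_r(X))$.

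For (a) $\Rightarrow$ (b), however, your route differs from the paper's and has a real gap. The paper does \emph{not} test against polynomials; it tests against $e^{ixF_n}$ and shows that the characteristic function of any subsequential limit solves the linear ODE of Lemma~\ref{030416d}, whose unique solution is $\varphi_X$ (Picard--Lindel\"of). The choice of bounded test functions is the whole point of the improvement over \cite{azmoodeh}: since $|e^{ixF_n}|=1$, one has directly
\[
\Bigl|\E\Bigl[e^{ixF_n}\sum_r\tfrac{P^{(r)}(0)}{r!2^{r-1}}M_{r-1}(F_n)\Bigr]\Bigr|\le \E\Bigl[\,\Bigl|\E\Bigl[\sum_r\tfrac{P^{(r)}(0)}{r!2^{r-1}}M_{r-1}(F_n)\,\Big|\,F_n\Bigr]\Bigr|\,\Bigr]\to 0,
\]
so the weak $L^1$-condition (a)(2) suffices (cf.\ Remark~\ref{230416b}(3)). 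With polynomial test functions $\phi(F_n)$ this bound fails; you would need at least an interpolation argument (uniform $L^q$-bounds on the conditional expectation plus $L^1\to 0$ imply $L^p\to 0$) to recover control of $\E[\phi(F_n)\cdot\E[\text{combination}\mid F_n]]$, and you have not mentioned this.

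More seriously, your ``Stein-type identity characterising the law of $X$'' is never written down. The concrete object that plays this role in the paper is the ODE of Lemma~\ref{030416d} for $\varphi_X$; showing that any subsequential limit $Y$ has $\varphi_Y$ satisfying the \emph{same} ODE is the entire technical content of Theorem~\ref{030416aa} (Steps 1--5 plus the coefficient comparison in the Appendix, i.e.\ the verification of Eq.~\eqref{040416dd}). A moment-recursion version of this argument is conceivable in principle, but it would amount to Taylor-expanding the same ODE and re-doing the same polynomial identity \eqref{040416dd}; nothing in your sketch indicates how that identity --- which is where the double roots at the $c_j$ and the extra factor $x^{1_{[a\ne0]}}$ actually do their work --- would emerge from iterating $\E[F\phi(F)]=\E[\phi'(F)\Gamma_1(F)]$.
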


 In this paper, we consider functionals of a Brownian motion. By a standard isometry argument (see e.g. \citep[Section 2.2]{nualart2005}), the results immediately extend to the framework of an isonormal Gaussian process on a general real separable Hilbert space $H$.

The notations used in this theorem are introduced in Section \ref{prelim}. In particular, the representation of $X$ in Eq.~\eqref{071216a} is detailed in Eq.~\eqref{210416a} and \eqref{070416aaa}, $\kappa_r(Y)$ is the $r$-th cumulant of a random variable $Y$ and the sequence $\left\lbrace \Gamma_i(F_n)\right\rbrace_i$ is defined recursively using Malliavin operators.
In Section  \ref{mainresults} the representation  of the target random variable    in Eq.~\eqref{071216a} is derived.
\\Our main result unifies, generalizes and extends previous findings. More precisely, Theorem \ref{230416a}:
 \begin{itemize}
 \item further extends to a non-central setting  the seminal paper \cite{nualart2005} and in particular \cite[Theorem 5.3.1]{Peccati} by dealing with central limit theorems for sequences living in a finite sum of Wiener chaoses,
\item extends \cite[Theorem 3.4]{ECP2023} by considering a sequence $\left\lbrace F_n \right\rbrace_n$ of random variables which are no longer restricted to the second Wiener chaos but live in a finite sum of Wiener chaoses,
 \item extends \cite[Theorem 1.2]{nourdin2009b}, \cite[Theorem 3.2]{azmoodeh} and \cite[Proposition 1.7]{2016arXiv161202279D} by considering  target variables involving linear combinations of independent $\chi^2$ distributed random variables and adding a possibly correlated normal variable,
\item improves \cite[Theorem 3.2]{azmoodeh} by replacing $L^2$-convergence with $L^1$-convergence and finding thus a necessary and sufficient criterion for convergence in law for a  large class of target variables, in particular for linear combination of independent central $\chi^2$ distributed target variables.
 \end{itemize}
 In addition to these applications which are discussed in Section \ref{applications}, Theorem \ref{230416a} is used to investigate stable convergence in Section \ref{stable}.

\subsection{History and motivation}\label{history}
The study of convergence in law for sequences of multiple Wiener integrals, by variational techniques, has been the object of an intense study in recent years. 
 The starting point of this line of research is \cite{nualart2005}. In this reference and later in \cite{Nualart2008614}, the authors gave necessary and sufficient criteria
for the convergence in law of a sequence of multiple Wiener integrals $I_p(f_{n,p})$  to a standard normal variable $N$: 
\textit{If the functions $f_{n,p}$ are symmetric in the $p\ge 2$ variables with $\E[I_p(f_{n,p})^2]\to 1$, as $n\to \infty$, then   the following conditions are equivalent:
\begin{enumerate}[(i)]
\item $\E[I_p(f_{n,p})^4]\to 3$, as $n\to \infty$,
\item $ \Vert f_{n,p}  \otimes_l f_{n,p} \Vert_{H^{\otimes (2p-2l)}}\to 0  $,
 as $n\to \infty$, for every $l=1,\ldots, p-1$,
 \item $\Vert DI_p(f_{n,p})\Vert_{H}^2 \stackrel{L^2(\Omega)}{\to} p$, as $n\to \infty$,
 \item $I_p(f_{n,p})\stackrel{\textrm{Law}}{\to}N$, as $n\to \infty$.
 \end{enumerate}}
Notice that $D$ is the standard Malliavin derivative operator and $f_{n,p}\otimes_l f_{n,p}$ is the contraction of order $l$, see Section \ref{malliavin}. The equivalence of  (i), (ii) and (iv) has been found in \cite{nualart2005}, the equivalence of either one of these conditions  with (iii) has been proved later in  \cite{Nualart2008614}.
Considering the proof of \cite[Theorem 4]{Nualart2008614}, it is easy to see that condition (iii) above can be replaced by:
\textit{\begin{enumerate}[(i)]
\item[(iii')] $\E\left[ \Vert DI_p(f_{n,p})\Vert_{H}^2 \,|\, I_p(f_{n,p})\right] \stackrel{L^1(\Omega)}{\to} p$, as $n\to \infty$.
\end{enumerate}}
This is remarkable since conditions of this form play a crucial role in \citep{azmoodeh} and in the main result of the present paper. 
 Since this characterisation  has been published,  limit theorems have been extended   beyond standard normal target variables. In \cite{nourdin2009b}, the authors establish, for $p\ge 2$ even,   necessary and sufficient conditions for a sequence $\left\lbrace I_p(f_{n,p})\right\rbrace_n$  of multiple Wiener integrals to converge in law to a Gamma random variable.  The   conditions found by Nourdin and Peccati use  contractions, convergence of the first moments of $I_p(f_{n,p})$ and Malliavin derivatives.
In particular, the results of \cite{nourdin2009b} cover the convergence in law of a sequence of multiple Wiener integrals to a random variable $X_1\stackrel{\textrm{Law}}{=} \sum_{i=1}^k(N_i^2-1)$ with $N_i$ independent standard normal variables and $1\le k<\infty$. $X_1$ has  a centered $\chi^2$ law with $k$ degrees of freedom. The authors also prove that the convergence is stable in this case. More results about stable convergence can be found in \cite{MR2425357} and in the more recent work \cite{2013arXiv1305.3899N}.

For the case $p=2$, linear combinations of independent centered $\chi^2$ distributed random variables are  important since it is known that every element $X_2$ of the second Wiener chaos has a representation of the form $X_2\stackrel{\textrm{Law}}{=}\sum_{i=1}^n \alpha_i (N_i^2-1)$, where $1\le n \le \infty$  and $\left\lbrace N_i\right\rbrace_i$ is a sequence of independent standard normal variables,  see \cite[Theorem 6.1]{janson1997gaussian}. It is proved in \cite{ECP2023}, that every sequence $\left\lbrace I_2(f_{n,p})\right\rbrace_n$ which converges in law has a limit of the form $\alpha_0 N_0+ \sum_{i=1}^n \alpha_i (N_i^2-1)$, where $N_i$ are independent standard normal variables. 
 In \cite[Theorem 3.4]{ECP2023} the authors use cumulants and a polynomial $Q$ to characterise this convergence in law if $n<\infty$.
 
The idea of using polynomials to find necessary and sufficient conditions for the convergence in law has proved to be useful. 
In \citep{azmoodeh}, the authors consider the  more general problem of finding necessary and sufficient conditions for a sequence $\left\lbrace F_n\right\rbrace_n$ to converge to a random variable with a representation of the form
\begin{align}
X_3\stackrel{\textrm{Law}}{=} \sum_{i=1}^k \alpha_i (N_i^2-1),\quad N_1,\ldots,N_k\stackrel{\textrm{i.i.d.}}{\sim}\mathcal{N}(0,1), \label{231116a}
\end{align}
where $F_n$ are random variables living in a (fixed) finite sum of Wiener chaoses, see Section \ref{wiener}. For $k<\infty$, their main finding, \cite[Theorem 3.2]{azmoodeh} provides a necessary and a sufficient condition for $F_n\stackrel{\textrm{Law}}{\to} X_3$, as $n\to \infty$, in terms of Malliavin operators $\Gamma_i$, defined in Section \ref{cumulants}.

{
Linear combinations of independent centered $\chi^2$  distributed random variables as in Eq.~\eqref{231116a} are of great interest because of their role within the second Wiener chaos. 
This class of random variables is important in stochastic geometry as well. In \cite{2015arXiv150800353M}, the authors consider the two-dimensional torus and prove the weak convergence of the normalized nodal length of the so-called \lq arithmetic random waves\rq, to a target variable $\mathcal{M}_\eta$ defined by:
\[
\mathcal{M}_\eta := \frac{-1-\eta}{2\sqrt{1+\eta^2}} (X_1^2-1) + \frac{-1+\eta}{2\sqrt{1+\eta^2}}  (X_2^2-1),\quad \eta\in [0,1],
\]
where $X_1, X_2$ are independent standard normal variables. An important element of the proof is the fact that the Wiener chaos expansion of the normalised nodal length is dominated by its fourth order chaos component.
}

Another line of research, which is closely connected to the previous results, investigates the convergence of sequences living in a finite sum of Wiener chaoses by using distances between probability measures. In this context, the distance between two laws $F$ and $G$ is defined as:
\begin{align*}
d_{\mathscr{H}}(F,G)=\sup_{h\in \mathscr{H}}| \E[h(F)] - \E[h(G)] |,
\end{align*}
where $\mathscr{H}$ is a class of functions. Different classes $\mathscr{H}$ lead to distances such as the Wasserstein, total variation and Kolmogorov distance, see \cite[Appendix C]{Peccati} for details. Upper bounds for the total variation and smooth distances are proved in \cite{MR2520122} and \cite{MR3003367}. For most of these results, the distance of a distribution to  a centered normal distribution or a centered $\chi^2$ distribution with $\nu$ degrees of freedom is considered. Recently, in \cite{2016arXiv161202279D} a new estimate is proved for the Wasserstein distance of a distribution to a centered $\chi^2$ distribution with $\nu$ degrees of freedom. In particular, the authors find a new necessary and sufficient criterion for sequences living in a fixed Wiener chaos to converge in law to $X_1$ defined above. 

{
In \cite{2016arXiv160103301A} the authors consider target variables of the  form given in Eq.~\eqref{231116a}:
\begin{align*}
X_3=\sum_{i=1}^k \alpha_i (N_i^2-1), 
\end{align*}
where the coefficients are not necessarily pairwise distinct and $N_1,\ldots, N_k$ are independent standard normal variables. The authors discuss Stein's method for this class of target variables and apply a new and original Fourier-based  approach to derive a Stein-type characterisation. The polynomials used in \cite{ECP2023, azmoodeh} and $\Gamma$-operators, see \cite{azmoodeh, Peccati}, are combined with the integration by parts formula of Malliavin calculus to derive a Stein operator which allows to characterise target variables as in Eq.~\eqref{231116a}. The authors consider a linear combination of $\Gamma$-operators which shall be generalized in the present paper and  the 2-Wasserstein distance. 
In general, the 2-Wasserstein distance between the laws of random vectors $U$ and $V$
is defined as follows:
\begin{align*}
d_{W_2}(U, V):= \left( \inf\E\left[ \Vert X-Y\Vert_d^2 \right]  \right)^{\frac{1}{2}},
\end{align*} 
where the infimum is taken over all joint distributions of $X$ and $Y$ with respective marginals $U$ and $V$,  and $\Vert \cdot \Vert_d$ stands for the Euclidean norm on $\mathbb{R}^d$, 
see \cite[Definition 1.1]{2017arXiv170401376A}. It is  shown that:
\begin{align*}
d_{W_2}(F_n, X_3)\le C \left( \sqrt{\Delta(F_n)}+\sum_{r=2}^{k+2}|\kappa_r(F_n)-\kappa_r(X_3)|\right),
\end{align*} 
where $C$ is independent of $n$, the quantity $\Delta(F_n)$ can be expressed in terms of cumulants $\kappa_r$ and polynomials, the sequence $\left\lbrace F_n\right\rbrace_n$ must satisfy several conditions which hold in particular for sequences living in the second Wiener chaos. It is proved in particular that:
 \begin{align}
d_{W_2}(F_n, X_3)\le C   \sqrt{\Delta(F_n)},\label{131018b}
\end{align} 
if $\dim_{\mathbb{Q}} \textrm{span }\left( \alpha_1^2, \ldots, \alpha_k^2\right)=k$, see Eq.~\eqref{231116a}. This shows that $\Delta(F_n)\to 0 $, as $n\to \infty$, is sufficient for convergence in the 2-Wasserstein metric which implies convergence in law. In \cite{2017arXiv170401376A}, it is shown that the convergence of the cumulants can not be omitted in the general case. In addition to the  upper bound found in \cite{2016arXiv160103301A}, a lower bound for the 2-Wasserstein metric is derived in \cite{2017arXiv170401376A}, namely:
\begin{align*}
d_{W_2}(F_n, X_3)\ge C' \sqrt{\Delta(F_n)},
\end{align*}
where $C'>0$ is independent of $n$ and the sequence $\left\lbrace F_n\right\rbrace_n$ lives in the second Wiener chaos. In other words, for sequences living in the second Wiener chaos, weak convergence to $X_3$ is equivalent to $\Delta(F_n)\to 0$, as $n\to \infty$,  if $\dim_{\mathbb{Q}} \textrm{span }\left( \alpha_1^2, \ldots, \alpha_k^2\right)=k$.
The so-called Stein-Tikhomirov method is considered in \cite{2016arXiv160506819A}. This method can be seen as a combination of Stein's method with other methods to measure the rate of convergence of a sequence of random variables. The authors consider in particular the Stein-type characterisation found in \cite[Theorem 2.1]{2016arXiv160103301A} and apply their version of the Stein-Tikhomirov method. The same linear combination of $\Gamma$-operators as in \cite{2016arXiv160103301A} is used and the so-called transfer-principle allows to find upper bounds on smooth Wasserstein distances using upper bounds on the difference of the characteristic functions of the approximating sequence and the target variable. For sequences living in the sum of the first $p$ Wiener chaoses and constants $C, \Theta > 0$ depending only on $p$, the following bound is proved:
\begin{align*}
d_{W_2}(F_n, X_3)\le C \, \Delta_n \, |\log (\Delta_n)|^\Theta,
\end{align*}
where $\Delta_n$ is expressed in terms of cumulants and $\Gamma$-operators. In particular, if $k\ge 3$ in Eq.~\eqref{231116a}, we have for the Kolmogorov distance that $d_{\textrm{Kol}}(F_n, X_3)\le B \, \sqrt{\Delta_n}$. Finally the authors find bounds for $\Delta_n$ if $\left\lbrace F_n\right\rbrace_n$ lives inside a fixed Wiener chaos and $k=1$ in Eq.~\eqref{231116a}.
Even though the aforementioned papers present important new results for target variables living in the second Wiener chaos, none of them considers target variables living in the sum of the first two Wiener chaoses with possibly correlated first and second order components.
}

The results  of \cite[Theorem 3.2]{azmoodeh} are the starting point of the present work.
As anticipated, we shall consider a sequence $\left\lbrace F_n \right\rbrace_n$ of random variables living in a finite (fixed) sum of Wiener chaoses and provide necessary and sufficient conditions for $F_n \stackrel{\textrm{Law}}{\to} X_4$, as $n\to \infty$, where $k<\infty$ and $X$ has the following, more general form:
\begin{align}
X_4= \sum_{i=1}^k  \left[ { \alpha_i (N_i^2-1)+\beta_i N_i}\right],\quad N_1,\ldots,N_k\stackrel{i.i.d.}{\sim}\mathcal{N}(0,1). \label{250416a}
\end{align}
The representation in Eq.~\eqref{250416a}
is   equivalent to one in Eq.~\eqref{071216a}, where we have dropped all vanishing coefficients and regrouped  independent normal variables. 
Both representations \eqref{071216a} and \eqref{250416a} are useful for the discussion to follow.
Random variables as in Eq.~\eqref{250416a} are important since every random variable living in the sum of the first two Wiener chaoses has a representation of this form, with $k\le \infty$, see \cite[Theorem 6.2]{janson1997gaussian}. Our conditions  make, as in \cite{azmoodeh}, use of the operators $\Gamma_i$. Clearly such a result can be seen as extension of \cite[Theorem 3.2]{azmoodeh}.
For sequences of random variables living in a finite sum of Wiener chaoses, we shall apply our methods and results to derive necessary and sufficient criteria  to prove stable convergence to target variables with representations as in Eq.~\eqref{250416a}

\subsection{Results and plan}
The paper is organized as follows:
\begin{enumerate}[-]
\item In Section \ref{prelim}, we introduce the necessary notations and give a brief introduction to Malliavin calculus. The basic elements of this theory shall be needed in the forthcoming proofs.
\item In Section \ref{mainresults}, we prove our characterisation in Theorems \ref{030416aa} and   \ref{030416aab}. The main Theorem \ref{230416a} is then a direct consequence of these theorems.
\item In Section \ref{applications}, we apply Theorem \ref{230416a} to several situations, such as the convergence in law to a normal variable or a centered $\chi^2$ distributed random variable with $k_1$ degrees of freedom. We shall also recover the results of \cite{azmoodeh}. In Theorem \ref{230416aa}, we give   sufficient conditions, based only on cumulants and contractions. 
{ We conclude this section by giving a criterion which excludes certain classes of target variables for sequences living in a chaos of odd order.}
\item In Section \ref{stable}, we give criteria which can be used to determine whether a sequence converges stably. 
\end{enumerate}

\section{Preliminaries} \label{prelim}

\subsection{Multiple Wiener integrals}\label{wiener}
The reader is referred to \citep{Peccati}, \cite{MR2200233} or \cite{MR2460554} for a   detailed introduction to  multiple Wiener integrals.
Consider the real Lebesgue space $H= L^2([0,T], \lambda_T) $, where $\lambda_T $ is the Lebesgue measure on $[0, T]$.
The real separable Hilbert space $H $ is endowed with the standard scalar product
$
\langle h, g\rangle_H:= \int_{0}^{T}h  g d\lambda
$
for all $h, g\in H$. We write $H^{\otimes p}$ for $L^2([0, T]^p ,  \lambda_T^p)$, where $\lambda_T^p:=\lambda^p | [0, T]^p$, and define $H^{\odot p}$ as the subspace of $H^{\otimes p}$ containing exactly the functions which are symmetric on a set of Lebesgue measure $T^p$.
Consider a complete probability space $\left( \Omega, \p, \mathcal{F}\right)$ and a standard Brownian motion $(W_t)_{t\in [0, T]}$ with respect to $\p$ and the filtration $(\mathcal{F}_t)_{t\in [0, T]}$. 
Define for every $h\in H$:
\[
W(h)=I_1(h)=\int_0^T h(t)dW_t,
\]
then  $W(h)\in L^2 (\Omega):= L^2 \left( \Omega, \p\right)$, in other words $W(h)$ is square-integrable.
 We have for $h, g\in H$: 
\[
\E[W(h)\,W(g)]= \E[I_1(h)\, I_1(g)]= \langle h, g \rangle_H.
\]
More generally the $q$-th Wiener chaos is defined as closed linear subspace of $L^2(\Omega)$ which is generated by the random variables of the form $H_q(W(h))$ where $h\in H$ with $\Vert  h \Vert_H=1 $ and $H_q$ is the $q$-th Hermite polynomial. The elements of the $q$-th Wiener chaos can be represented as multiple Wiener integrals. For every  $f_p\in H^{\odot p}$:
\begin{align*}
 I_p(f_p)&:=\int_{[0, T]^p}f_p(t_1,  \ldots,  t_p)dW_{t_1}  \ldots   dW_{t_p} \\
 &:=p! \int_0^T \left(\int_0^{t_p} \ldots  \int_0^{t_3}\left(\int_0^{t_2} f_p(t_1, \ldots, t_p)dW_{t_1}\right) dW_{t_2} \ldots  dW_{t_{p-1}} \right) dW_{t_p}. 
\end{align*}
It is well known that every $F\in L^2(\Omega)$ has a representation of the form $F=
\sum_{p=0}^\infty I_p(f_p)$,
where $I_0(f_0)=f_0=\E[F]$ and the right-hand side  converges in $L^2(\Omega)$. We have moreover for $p, q\ge 1$:
\begin{align}
\E[I_p(f_p)\,I_q(g_q)]= 1_{[p=q]} \,p!\, \langle f_p , g_q \rangle_{H^{\otimes p}}=1_{[p=q]}\int_{[0, T]^p}f_p\, g_q\, d\lambda_T^p. \label{260416x}
\end{align}
For two function $f_p\in H^{\odot p}$ and $g_q\in H^{\odot q}$, the contraction of $r$ indices is defined for $1\le r\le p\wedge q$ by:
\begin{align*}
(f_p\otimes_r g_q )(t_1, \ldots, t_{p+q-2r})&= \int_{[0, T]^r}f_p(t_1, \ldots, t_{p-r}, s)g_q(t_{p-r+1}, \ldots, t_{p+q-2r}, s) \,d\lambda^r (s).
\end{align*}
We have $f_p\otimes_r g_q \in H^{p+q-2r}$. The symmetrization of $f_p\otimes_r g_q$ is $f_p\tilde{\otimes}_r g_q$.
We shall also need the multiplication formula for multiple Wiener integrals. For $f_p\in H^{\odot p} $ and $g_q\in H^{\odot q}$, we have:
\[
I_p(f_p) I_q(g_q)= \sum_{r=0}^{p\wedge q} r! \begin{pmatrix}
p\\r
\end{pmatrix}\, \begin{pmatrix}
q\\r
\end{pmatrix} I_{p+q-2r}(f_p\tilde{\otimes}_{r} g_q).
\]

\subsection{Malliavin calculus}\label{malliavin}
The reader is referred to \citep{Peccati}, \cite{MR2200233} or \citep{azmoodeh} for a   detailed introduction to  Malliavin calculus.
Let $k\ge 1 $ and $F$ be  a random variable with $F=f\left( W(h_1), \ldots
, W(h_k)\right)$ where 
 $f$ is an infinitely differentiable rapidly decreasing function
on $\mathbb{R}^k$ and
$h_1, \ldots, h_k\in H$.
Then $F$ is called a \textit{smooth random variable} and $\mathcal{S}$ is the set containing exactly the smooth random variables.
The Malliavin derivative of $F$ is defined by:
\begin{align}
DF:=&\sum_{i=1}^k {h}_i(t)\, \partial_i f (W(h_1), \ldots
, W(h_k)) \nonumber\\
  &=\sum_{i=1}^k {h}_i(t)\, \partial_i f
\left(\int_0^T {h}_i(s)dW_s, \ldots ,  \int_0^T 
{h}_k(s)dW_s\right). \label{191214}
\end{align}
We have that ${D}_t$ is closable from $L^2(\Omega, \p)$ to
$L^2(\Omega\times[0, T], \p\otimes\lambda_T)$, that is (see \cite
{MR2460554} or \cite[Proposition 1.2.1]{MR2200233}): \textit{If a
sequence $\left\lbrace H_n \right\rbrace_{n\in\mathbb{N}} \subset L^2\left(
\Omega, \p\right)$ converges to $0$, that is $\E\left[ H_n^2
\right]\to 0$ as $n\to\infty$,
and
${D}_t H_n$ converges in $L^2(\Omega\times[0, T], \p\otimes\lambda
_T)$ as $n\to\infty$, then $\lim_{n\to\infty}{D}_t H_n = 0 $.} We
write Dom ${D}$ for the closed domain of ${D}$.
Moreover the Malliavin derivative has a closable adjoint $\delta$
(under $\p$). The operator $\delta$ is called the \textit
{divergence operator} or, in the white noise case, the Skorohod
integral. The domain of $\delta$ is denoted by Dom $\delta$, it is
the set of square-integrable random variables $v\in L^2(\Omega\times
[0,T], \p\otimes\lambda_T)$ with:
\[
\E\left[ \langle{D}F, v\rangle_{H} \right]
\le c_v\, \sqrt{\E[F^2]},
\]
for a constant $c_v$ (depending on $v$) and all $F\in \mathbb{D}^{1,2}$ where
$\mathbb{D}^{1,2}$ is the closure of the class of smooth random variables with
respect to the norm:
\[
\Vert F \Vert_{{1,2}}:= \left( \E\left[ F^2 \right] + \E
\left[ \Vert{D} F\Vert_{H}^2 \right] \right)^{1/2}.
\]
With the scalar product
$
\langle F, G\rangle_{1, 2}=\E[FG]+ \E[\langle{D}F, {D}G\rangle
_{H}],
$
$\mathbb{D}^{1, 2}$ is a Hilbert space. If $v\in$ Dom $\delta$, then $\delta(v)$ is the element of
$L^2(\Omega, \p)$ characterised by
\begin{align}
\E[F\delta(v)]=\E[\langle v, {D}F\rangle_{H}]. \label{int_parts}
\end{align}
This relation is often called the \textit{integration by parts
formula}. We have the more general rule (see \cite[Proposition
2.5.4]{Peccati}) for $F\in \mathbb{D}^{1, 2}$, $v\in$ Dom
$\delta$ such that $Fv\in$ Dom $\delta$:
\begin{align}
\delta(Fv)=F\delta(v)- \langle{D}F, v\rangle_{H}
. \label{int_parts_0}
\end{align}
For multiple Wiener integrals and $f\in H^{\odot k}$, we have (see for instance \cite[p.35]{MR2200233}):
\begin{align}
&~{D}_{x_1}{D}_{x_2}\ldots{D}_{x_l} \int_0^T \ldots\int_0^T f 
(y_1, \ldots, y_k)dW_{y_1}\ldots dW_{y_k}\label{24082014a} \\
&=
\frac{k!}{(k-l)!}
\, \int_0^T \ldots\int_0^T f (y_1, \ldots, y_{k-l}, x_1, \ldots 
, x_l)dW_{y_1}\ldots dW_{y_{k-l}} .\nonumber
\end{align}
Formula \eqref{int_parts} can be generalized for the
multiple divergence (see \cite[p.33]{Peccati}): If $v\in$ Dom $\delta
^l$ and $F\in \mathbb{D}^{l, 2}$:
\begin{align}
\E\left[ F \delta^l (v) \right]=\E[\langle{D}^l F , v\rangle
_{H^{\otimes l}}], \label{031115c}
\end{align}
see \cite{Peccati} or \cite{MR2200233} for details. 
For $m\ge 1$, $p\ge 1 $ and the $m$-th Malliavin derivative $D^m F$, we can define  $\mathbb{D}^{m, p}$ as the closure of the class of smooth random variables with respect to the norm $\Vert \cdot \Vert_{m, p}$ defined by:
\[
\Vert F \Vert_{m, p} = \left( \E[|F|^p]  +\sum_{i=1}^m \E\left[\Vert D^i F \Vert^p_{H^{\otimes i}} \right] \right)^{1/p}
\]
We define $\mathbb{D}^\infty:= \cap_{m=1}^\infty \cap_{p=1}^\infty \mathbb{D}^{m,p}$.

\subsection{Cumulants and $\Gamma$-operators}\label{cumulants}
The reader is referred to \citep{azmoodeh} or \cite{Peccati} for a   detailed introduction to  cumulants, Malliavin operators and $\Gamma$-operators in particular.
The $r$-th cumulant of a random variable $F$ exists if the $r$-th moment of $F$ exists and is defined as $\kappa_r(F):= (-i)^r \frac{d^r}{dt^r}\log \E[\exp(itF)]_{|t=0}$.    
The operator $L$, defined as $L=-\sum_{q=0}^\infty q J_q$, is the infinitesimal generator of the Ornstein-Uhlenbeck semi-group where $J_q$ is the orthogonal projection operator on the $q$-th Wiener chaos. The domain of $L$ is $\mathbb{D}^{2, 2}$. $L$ admits a pseudo-inverse $L^{-1}$ and for any $F\in L^2(\Omega)$, we have $L^{-1}F=-\sum_{q=1}^\infty \frac{1}{q} J_q(F)$.
For $F\in \mathbb{D}^\infty$, the sequence of random variables $\left\lbrace\Gamma_i(F)\right\rbrace_i\subset \mathbb{D}^\infty$ is recursively defined as follows:
\[
\Gamma_i(F)=\langle DF, -DL^{-1}\Gamma_{i-1}(F)\rangle_H,\quad \textrm{for } i\ge 1,
\]
and $\Gamma_0(F)=F$. For $F\in \mathbb{D}^\infty$ and $r\ge 0$, we define:
$
M_r(F)=\Gamma_r(F)-\E[\Gamma_r(F)]$.

\subsection{Stable convergence}\label{stableconvergence}
The concept of stable convergence is used in Section \ref{stable}. The reader can find  an extensive discussion of this topic in \cite{MR1943877} or \cite{MR3362567}, the basic facts are resumed in \cite{2013arXiv1305.3899N}.
Consider a sequence $\left\lbrace F_n\right\rbrace_n$ of real random variables on the complete probability space $\left(\Omega, \mathcal{F}, \p \right)$, see Section \ref{wiener}. Let $F$ be a real random variable defined on some extended probability space $\left( \Omega', \mathcal{F}', \p'\right)$. We say that $F_n$ converges stably to $F$, written $F_n \stackrel{\textrm{st.}}{\to}F$, as $n\to \infty$, if:
\begin{align*}
\lim_n \E\left[ Z \exp\left( i \lambda F_n \right) \right] = \E'\left[ Z\exp\left( i \lambda F \right) \right],
\end{align*}
for every $\lambda\in \mathbb{R}$ and every bounded $\mathcal{F}$-measurable random variable $Z$.
Obviously, stable convergence implies convergence in law, whereas the converse  does not hold in general.
We notice that the $\p$-completion of the $\sigma$-field generated by the set $\left\lbrace I_1(f) : f\in H \textrm{ with } \Vert f \Vert_H=1 \right\rbrace$ is $\mathcal{F}$. We have thus the following useful characterisation of stable convergence:
\begin{center}
$ F_n\stackrel{\textrm{st.}}{\to} F$ if and only if  $\left( F_n, I_1(f) \right)^\top \stackrel{\textrm{Law}}{\to} \left( F, I_1(f) \right)^\top$ for every $f\in H$ with $\Vert f \Vert_H=1$.
\end{center}

\section{Main results}\label{mainresults}
We start with an example in order to motivate the reader.

\begin{example}
Consider sequences $\left\lbrace h_{n,2}\right\rbrace_n,\left\lbrace f_{n,2}^{(i)}\right\rbrace_n, \left\lbrace g_{n,2}^{(j)}\right\rbrace_n \subset H^{\odot 2}$ for $1\le i\le k_1$ and $1\le j\le k_2$ for $k_1>0$ and $k_2>0$.  {Suppose that the sequences $\left\lbrace I_2(f_{n,2}^{(i)})\right\rbrace_n$,  $\left\lbrace I_2(g_{n,2}^{(j)})\right\rbrace_n$ and $\left\lbrace I_2(h_{n,2})\right\rbrace_n$ converge in law to a standard normal variable  and that $
 \langle k_{n,2} , k'_{n,2}\rangle_{H\otimes H}\to 0$, 
 for any distinct sequences  $\left\lbrace k_{2,n} \right\rbrace_n  $ and $  \left\lbrace k'_{2,n} \right\rbrace_n$ chosen among $\left\lbrace h_{n,2}\right\rbrace_n,\left\lbrace f_{n,2}^{(i)}\right\rbrace_n, \left\lbrace g_{n,2}^{(j)}\right\rbrace_n $ for $1\le i\le k_1$ and $1\le j\le k_2$.
}
%
%
%
We have then, see \cite[Theorem 1]{MR2126978}, as $n\to \infty$: 
\begin{align*}
(I_2(h_{n,2}), I_2(f_{n,2}^{(1)}), \ldots , I_2(f_{n,2}^{(k_1)}) , I_2(g_{n,2}^{(1)}), \ldots , I_2(g_{n,2}^{(k_2)}))^\top \stackrel{\textrm{Law}}{\to} Z^\top,
\end{align*}  
where $Z^\top:=(N, R_1, \ldots, R_{k_1}, P_1,\ldots, P_{k_2})^\top$ has a $k_1+k_2+1$-dimensional standard normal distribution. The continuous mapping theorem yields that, as $n\to \infty$:
\begin{align}
&I_2(h_{n,2})+ \sum_{i=1}^{k_1} b_i \left( I_2(f_{n,2}^{(i)})^2-1 \right) + \sum_{j=1}^{k_2} \left[ c_j \left( I_2(g_{n,2}^{(j)})^2-1 \right) + d_j I_2(g_{n,2}^{(j)})\right]\label{241116a}\\
&\stackrel{\textrm{Law}}{\to} aN+ \sum_{i=1}^{k_1} b_i(R_i^2-1) + \sum_{j=1}^{k_2} \left[ c_j (P_j^2-1)+ d_jP_j \right], \label{180816aa}
\end{align}
where $N, R_1, \ldots,  R_{k_1}, P_1, \ldots, P_{k_2}\stackrel{\textrm{i.i.d.}}{\sim}\mathcal{N}(0,1)$.
It is easy to see that the expression in Eq.~\eqref{241116a} has a representation of the form $I_4(\varphi_{n, 4})+I_2(\varphi_{n,2})+\varphi_{n, 0}$ for $\varphi_{n, l}\in H^{\odot l}$, $l=0, 2, 4$, and $\lim_n \varphi_{n, 0}=0$. We find with Slutsky's theorem that  $I_4(\varphi_{n, 4})+I_2(\varphi_{n, 2})\stackrel{\textrm{Law}}{\to}aN+ \sum_{i=1}^{k_1} b_i(R_i^2-1) + \sum_{j=1}^{k_2} \left[ c_j (P_j^2-1)+ d_jP_j \right]$, as $n\to \infty$.
\end{example}

\begin{remark}\label{250416b}
In the present paper we give  necessary and sufficient conditions for weak convergence towards target variables as in Eq.~\eqref{180816aa}.
We illustrate now why this class of target variables is important.

Consider a  random variable $X$ living in the sum of the first two Wiener chaoses: 
\begin{align}
X:=I_1(f_1)+ I_2(f_2),\label{210416a}
\end{align}
 with $f_1\in H$ and $f_2\in H^{\odot 2}$. 
It is known, see  \cite[Theorem 6.1]{janson1997gaussian} or \cite[Proposition 2.7.13]{Peccati}, that $I_2(f_2)=\sum_{k=1}^\infty \alpha_k (I_1(h_k)^2-1)$ for an orthonormal system $\left\lbrace h_i | i\in\mathbb{N}\right\rbrace \subset H$. Suppose from now on that $\alpha_k\ne 0$ for $1\le k \le N$ and $\alpha_k=0$ for every $k\ge N$. Consider the projection of $f_1$ on $\textrm{span}(h_1, \ldots, h_N)$, then we find  $f_1=\beta_1 h_1+\ldots + \beta_N h_N+ \beta_0 h_0$, where $\Vert h_0\Vert_H=1$ and $h_0\perp \textrm{span }(h_1, \ldots, h_N)$. Hence:
\begin{align}
X&= \beta_0 I_1(h_0) +  \sum_{i=1}^N \left[ \beta_i I_1(h_1) + \alpha_i(I_1(h_i)^2-1)\right]. \label{141018a}
\end{align}
Since $\langle h_i , h_j\rangle_H=\delta_{i,j}$, we have that $\left\lbrace I_1(h_0),\ldots, I_1(h_N)\right\rbrace$ is a  set of independent standard normal variables and some of the coefficients $\beta_0, \ldots, \beta_N$ may be equal to 0. The representation of $X$ found in Eq.~\eqref{141018a} is thus equivalent to the representation in Eq.~\eqref{180816aa}:
\begin{align}
X&= a N + \sum_{i=1}^{k_1} b_i(R_i^2-1)+\sum_{i=1}^{k_2} [c_i(P_i^2-1)+d_iP_i],
\label{070416aaa}
\end{align}
where $b_i\ne 0,\, c_j d_j\ne 0$  for  $1\le i \le k_1~,~1\le j \le k_2$ and
 $N, R_i, P_j$ are independent standard normal variables living in the first Wiener chaos.
 A similar argument applied together with the multiplication formula for Wiener integrals shows that $X=I_1(f_1)+I_2(f_2)$ for: 
\begin{align}
f_1=  ah_0 +\sum_{i=1}^{k_2}d_i h'_i  ~,~ f_2= \sum_{i=1}^{k_1} b_i h_i \tilde{\otimes} h_i + \sum_{i=1}^{k_2} c_i h'_i \tilde{\otimes} h'_i,\label{231018}
\end{align}
for a set of orthonormal functions $h_0, h_1, \ldots, h_{k_1},  h_1', \ldots, h_{k_2}'$.

\color{black}

If $k_1=0$, we set $b_j=0$ for every $j$ and the empty sum in the representations above is removed. Notice that an empty product equals 1.  We proceed similarly if $k_2=0$. If on the other hand $k_1\ne 0$, we suppose that $b_j\ne 0$ for every $j=1,\ldots , k_1$. We proceed similarly if $k_2\ne 0$.

\end{remark}

The following Lemma shows that random variables with a representation as in Eq.~\eqref{070416aaa} extend the class of random variables with a representation as in Eq.~\eqref{231116a} by adding a (possibly correlated) normal variable.

\begin{lemma} \label{230516c}
Consider the following families of random variables:
\begin{enumerate}[(A)]
\item $X\stackrel{\textrm{Law}}{=} aN  + \sum_{i=1}^{k_1}b_i (R_i^2-1)+ \sum_{i=1}^{k_2}\left[ c_i \left( P_i^2-1 \right)+d_i P_i \right]$,
where $N, R_1, \ldots, R_{k_1}$, $P_1, \ldots, P_{k_2}\stackrel{\textrm{i.i.d.}}{\sim} \mathcal{N}(0,1)$ and $b_i\ne 0$ for $1\le i \le k_1$ if $k_1>0$ and $c_i d_i\ne 0$ for $1\le i\le k_2$ if $k_2>0$.
\item $Y\stackrel{\textrm{Law}}{=}aU_0 + \sum_{i=1}^n \lambda_i (U_i^2-1)$ where $\lambda_i\ne 0$ for $1\le i \le n$ if $n>0$ and  $(U_0, U_1, \ldots  , U_n)^\top$
 is a centered normal vector such that $U_1,\ldots,  U_n\stackrel{\textrm{i.i.d.}}{\sim}\mathcal{N}(0,1)$.
\end{enumerate}
Then class (A) coincides with class (B). In other words every random variable in (A) has a representation as in (B) and vice versa.
\end{lemma}


\begin{proof}

\begin{enumerate}[(1)]
\item Consider $X\stackrel{\textrm{Law}}{=}aN +\sum_{i=1}^{k_1} b_i (R_i^2-1) +\sum_{i=1}^{k_2}[c_i(P_i^2-1)+d_iP_i]$ as in (A), then, if $a^2+\sum_{i=1}^{k_2}d_i^2\ne 0$:
  \begin{align*}
  X&\stackrel{\textrm{Law}}{=}\left(aN + \sum_{i=1}^{k_2}d_i P_i\right) +\sum_{i=1}^{k_1} b_i (R_i^2-1) +\sum_{i=1}^{k_2} c_i(P_i^2-1) \\
  &\stackrel{\textrm{Law}}{=}\sqrt{a^2+\sum_{i=1}^{k_2}d_i^2}\,\frac{aN + \sum_{i=1}^{k_2}d_i P_i}{\sqrt{a^2+\sum_{i=1}^{k_2}d_i^2}}+\sum_{i=1}^{k_1} b_i (R_i^2-1) +\sum_{i=1}^{k_2} c_i(P_i^2-1).
  \end{align*}
Define:\[
U_i = \begin{cases}  \frac{aN + \sum_{i=1}^{k_2}d_i P_i}{\sqrt{a^2+\sum_{i=1}^{k_2}d_i^2}}& \textnormal{\textit{for }} i=0,\\ 
R_i & \textnormal{\textit{for }} 1\le i \le k_1,\\ 
 P_{i-k_1}&\textnormal{\textit{for} } k_1+1 \le i \le k_1+k_2, 
\end{cases}
\] 
and drop the corresponding terms if $k_1=0$ or $k_2=0$. After renaming the coefficients, we find the following representation with $n:=k_1+k_2$:
\[
X\stackrel{\textrm{Law}}{=}\sqrt{a^2+\sum_{i=1}^{k_2}d_i^2} U_0 + \sum_{i=1}^n \lambda_i(U_i^2-1),
\]
and $(U_0, U_1,\ldots, U_n)^\top $ is clearly centered and normal since every linear combination $\sum_{i=1}^n{\alpha_i U_i}$ is normal. This last property follows directly from $N, R_1, \ldots , R_{k_1}$, $P_1, \ldots, P_{k_2}\stackrel{\textrm{i.i.d.}}{\sim}\mathcal{N}(0,1)$. The definition of $U_1, \ldots , U_n$ yields that $U_1, \ldots, U_n \stackrel{\textrm{i.i.d.}}{\sim} \mathcal{N}(0, 1)$. If $a^2+\sum_{i=1}^{k_2}d_i^2=0$, then $a=d_1=\ldots=d_{k_2}=k_2=0$. The equivalence of both representations is trivial in this case.

\item Consider $Y\stackrel{\textrm{Law}}{=}aU_0 + \sum_{i=1}^n \lambda_i( U_i^2-1)$ as in (B). Since the case $a=0$ is clear, we suppose that $a\ne 0 $ and $U_0\sim\mathcal{N}(0,1)$. Let $U:=(U_0, U_1, \ldots, U_n)^\top\sim \mathcal{N}(0_{(n+1)\times 1},  \Sigma)$ for a positive semi-definite matrix $\Sigma$. Since $U_1, \ldots , U_n\stackrel{\textrm{i.i.d.}}{\sim}\mathcal{N}(0,1)$, we have:
\[\Sigma=
 \left( 
\begin{array}{c@{}|c@{}}
 \begin{array}{c}
      1 \\
  \end{array} & \begin{array}{ccc}
             \sigma_1  & \cdots & \sigma_n \\
  \end{array}    \\ 
  \hline
    \begin{array}{c}
                         \sigma_1\\ 
                       \vdots   \\
                        \sigma_n \\
                      \end{array}    & \mathbf{I_{n\times n}}
\end{array}\right).\]
We suppose first that $\det \Sigma=1-\sum_{i=1}^n\sigma_i^2>0 $ then $\Sigma=B \, B^\top $ for $B$ defined by:
\[
B:=  \left( 
\begin{array}{c@{}|c@{}}
 \begin{array}{c}
      \sqrt{\det \Sigma} \\

  \end{array} & \begin{array}{ccc}
             \sigma_1  & \cdots & \sigma_n \\
  \end{array}    \\ 
  \hline
    \begin{array}{c}
             \mathbf{0_{n \times 1}}
                      \end{array}    & \mathbf{I_{n\times n}}
\end{array}\right)\in \mathbb{R}^{(n+1)\times(n+1)}.
\]
Consider a $n+1$-dimensional standard normal vector
 $V^\top= (V_0, V_1, \ldots, V_{n})^\top$, then $U\stackrel{\textrm{Law}}{=}BV$. Hence:
 \begin{align}
 Y &\stackrel{\textrm{Law}}{=} a \left( \sqrt{\det \Sigma} V_0 + \sigma_1 V_1 + \ldots + \sigma_n V_n \right) + \sum_{i=1}^n \lambda_i (V_i^2-1) \nonumber \\
&\stackrel{\textrm{Law}}{=}a \sqrt{\det \Sigma} V_0 + \sum_{i=1}^n  \left[ \lambda_i(V_i^2-1) + a\sigma_i V_i \right].  \label{230516a}
 \end{align}
Noticing that some of the covariances $\sigma_i$ may be zero, Eq.~\eqref{230516a} yields the representation (A) after renaming the independent standard normal random variables and the coefficients. 

Consider now the case $\det \Sigma=0$, a standard normal vector $(V_1, \ldots, V_n)^\top$ and define $B\in \mathbb{R}^{(n+1)\times n}$:
\[
B:=  \left( 
\begin{array}{c@{}}
 \begin{array}{ccc}
             \sigma_1  & \cdots & \sigma_n \\
  \end{array}    \\ 
  \hline
 \mathbf{I_{n\times n}}
\end{array}\right)\in \mathbb{R}^{(n+1)\times n}.
\]
Then $B\, B^\top = \Sigma$ and $U\stackrel{\textrm{Law}}{=}BV$, hence:
\begin{align*}
Y &\stackrel{\textrm{Law}}{=}a\,\left( \sigma_1 V_1 + \ldots + \sigma_n V_n \right) + \sum_{i=1}^n \lambda_i (V_i^2-1)
\stackrel{\textrm{Law}}{=} \sum_{i=1}^n \left[ \lambda_i (V_i^2-1)+ a\sigma_i V_i\right].
\end{align*}
The statement follows now as above.
\end{enumerate}\end{proof}

\begin{remark}
The random variable $X$ in (A) may lead to a degenerate normal vector $(U_0, U_1, \ldots,  U_n)^\top $ in (B). In particular $a=k_1=0$, $k_2=1$ leads to $X\stackrel{\textrm{Law}}{=}d_1P_1+ c_1(P_1^2-1)$ and the corresponding normal vector $(U_0, U_1)^\top$ is degenerate. It can be easily deduced from the proof of Lemma \ref{230516c}  that $(U_0, U_1, \ldots, U_n)^\top$ is non-degenerate if $a\ne 0 $ or $k_2=0$. In order to simplify our calculations, we shall consider in this paper target variables of class (A).
\end{remark}

\color{black}

We shall need the characteristic function and the cumulants of $X$, defined in  Eq.~\eqref{210416a} and \eqref{070416aaa}.

\begin{lemma}\label{030416d}
Consider $k_1, k_2\ge 0$ and 
\[X=I_1(f_1)+I_2(f_2)=a N + \sum_{i=1}^{k_1} b_i(R_i^2-1)+\sum_{i=1}^{k_2} [c_i(P_i^2-1)+d_iP_i],\] as defined in 
Eq.~\eqref{210416a} and \eqref{070416aaa}  where    $N, R_1, \ldots, R_{k_1}, P_1, \ldots, P_{k_2} \stackrel{\textrm{i.i.d.}}{\sim}\mathcal{N}(0, 1)$.
We have  with $\Delta_l:=4c_l^2+d_l^2$ for the characteristic function $\varphi_X$ of $X$: 
\begin{align*}
\varphi_X(x)  &= \exp\left( -\frac{a^2x^2}{2}-ix \sum_{j=1}^{k_!}b_j +\sum_{j=1}^{k_2}\frac{x^2\Delta_j+2ic_j x}{4ixc_j-2} \right)\\
&\rule{5mm}{0mm} \times \prod _{j=1}^{k_1} (1-2ixb_j)^{-1/2} \, \prod _{j=1}^{k_2} (1-2ixc_j)^{-1/2},
\end{align*}
where, as usual, an empty product equals 1 and an empty sum equals 0.
Moreover $\varphi_X$ is the unique solution of the initial value problem $y(0)=1$ and:
\begin{align}
&y'(x)\prod_{j=1}^{k_1}(1-2ixb_j)\, \prod_{j=1}^{k_2} (1-2ixc_j)^2 \nonumber \\
&= y(x)\left( -xa^2 - i\sum_{j=0}^{k_1}b_j \right)\prod_{j=1}^{k_1}(1-2ixb_j)\, \prod_{j=1}^{k_2} (1-2ixc_j)^2\nonumber \\
&\rule{5mm}{0mm}+ y(x) \prod_{j=1}^{k_1} (1-2ixb_j)  \sum_{l=1}^{k_2} \left(\prod_{j\ne l} (1-2ixc_j)^2 \right) \left( 2xc_l^2-x\Delta_l(1-2ixc_l) -x^2 i c_l\Delta_l  \right) \nonumber \\
&\rule{5mm}{0mm} + y(x)\prod_{j=1}^{k_2} (1-2ixc_j)^2\sum_{l=1}^{k_1} ib_l \prod_{j\ne l} (1-2ixb_j).\label{010416b}
\end{align}  
\end{lemma}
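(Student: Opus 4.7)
The plan is to split the argument into three pieces: derive the explicit formula for $\varphi_X$ by independence and a direct Gaussian computation, verify that this formula solves Eq.~\eqref{010416b}, and invoke standard ODE uniqueness for the initial value problem.

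First, by independence of $N$, the $R_j$'s, and the $P_j$'s, the characteristic function factorises:
\[
\varphi_X(x)=\E[e^{ixaN}]\prod_{j=1}^{k_1}\E[e^{ixb_j(R_j^2-1)}]\prod_{j=1}^{k_2}\E[e^{ix(c_j(P_j^2-1)+d_jP_j)}].
\]
The first factor contributes $e^{-a^2x^2/2}$ and each chi-square factor contributes $e^{-ixb_j}(1-2ixb_j)^{-1/2}$. For the mixed factors I would complete the square in the one-dimensional Gaussian integral $\E[\exp(ixc_jP_j^2+ixd_jP_j)]$, obtaining $(1-2ixc_j)^{-1/2}\exp(-x^2d_j^2/(2(1-2ixc_j)))$; combining with $e^{-ixc_j}$ and using the algebraic identity
\[
-ixc_j-\frac{x^2d_j^2}{2(1-2ixc_j)}=\frac{x^2\Delta_j+2ixc_j}{4ixc_j-2},
\]
which follows from $2ixc_j(1-2ixc_j)=2ixc_j+4x^2c_j^2$ together with the definition $\Delta_j=4c_j^2+d_j^2$, yields the claimed closed form for $\varphi_X$.

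Next, I would verify the ODE by computing the logarithmic derivative $\varphi_X'(x)/\varphi_X(x)$. Each term of $\log\varphi_X$ differentiates elementarily: the polynomial/constant piece contributes $-a^2x-i\sum_j b_j$; each logarithm $-\tfrac12\log(1-2ixb_j)$ contributes $ib_j/(1-2ixb_j)$; and each $c_j,d_j$ block contributes a rational function with denominator $(1-2ixc_j)^2$ obtained from the fraction displayed above. Multiplying through by $\varphi_X(x)\prod(1-2ixb_j)\prod(1-2ixc_j)^2$ clears all denominators and, after regrouping, reproduces Eq.~\eqref{010416b} exactly. For uniqueness I would then observe that Eq.~\eqref{010416b} is a first-order linear ODE of the form $p(x)y'(x)=q(x)y(x)$, where $p(x):=\prod_{j=1}^{k_1}(1-2ixb_j)\prod_{j=1}^{k_2}(1-2ixc_j)^2$ is a polynomial with $p(0)=1$ and $q$ is a polynomial in $x$: standard theory yields a unique analytic solution with $y(0)=1$ near the origin, which extends uniquely to all of $\mathbb{R}$ since $p$ has only finitely many zeros.

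The main obstacle is the bookkeeping in the ODE verification: the derivative of the $c_j,d_j$ fraction must be expanded, and the three sums on the right-hand side of Eq.~\eqref{010416b} must be matched term by term. In particular the numerator $-x\Delta_l(1-2ixc_l)-x^2ic_l\Delta_l+2xc_l^2$ sitting in front of $\prod_{j\ne l}(1-2ixc_j)^2$ must be recognised as exactly what survives after clearing $(1-2ixc_l)^2$ in the $l$-th summand of the log-derivative. This is routine algebra but error-prone, and is the only delicate step in the proof.
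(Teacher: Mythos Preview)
Your approach is essentially identical to the paper's: factorise by independence, compute each block (the paper uses the non-central $\chi^2$ characteristic function via the representation $c_j(P_j^2-1)+d_jP_j=c_j(P_j+d_j/(2c_j))^2-(d_j^2+4c_j^2)/(4c_j)$, while you complete the square directly, but these are the same computation), then take the logarithmic derivative and clear denominators to obtain Eq.~\eqref{010416b}.

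One small point on uniqueness: your phrasing ``extends uniquely to all of $\mathbb{R}$ since $p$ has only finitely many zeros'' is misleading, because extension of a linear ODE solution through a zero of the leading coefficient is \emph{not} automatic in general. The correct observation (which the paper makes) is that $p(x)=\prod_j(1-2ixb_j)\prod_j(1-2ixc_j)^2$ has \emph{no} real zeros at all, since $|1-2ix\alpha|^2=1+4x^2\alpha^2>0$ for real $x,\alpha$; hence $q/p$ is continuous (indeed bounded) on every compact interval and Picard--Lindel\"of gives global uniqueness directly.
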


\begin{proof}
Using the characteristic function of the non-central $\chi^2$ distribution and the representation:
\[
c_j(P_j^2-1)+d_jP_j=c_j\left(P_j+\frac{d_j}{2c_j}\right)^2  - \frac{d_j^2+4c_j^2}{4c_j},
\]
we find for the characteristic functions:
\begin{align*}
\varphi_{b_j(R_j^2-1)}(x)&= (1-2ixb_j)^{-1/2} \, \exp\left( 
-ix b_j
\right),
\\
\varphi_{
c_j(P_j^2-1)+d_jP_j}(x)&= (1-2ixc_j)^{-1/2} \, \exp\left( 
\frac{x^2 \Delta_j + 2ic_j x}{4ixc_j-2}
\right).
\end{align*}
Hence, with the independence of the standard normal random variables:
\begin{align*}
\varphi_X(x)&=\varphi_{aN}(x)\, \prod_{j=1}^{k_1} \varphi_{b_j( R_j^2-1)}(x)\,
\prod_{j=1}^{k_2} \varphi_{c_j(P_j^2-1)+d_jP_j} (x)\\
&= \exp\left( -  \frac{a^2 x^2}{2} \right) \, 
\prod _{j=1}^{k_1} (1-2ixb_j)^{-1/2} \exp\left( -ix\sum_{j=1}^{k_1}b_j \right)
\,\prod _{j=1}^{k_2} (1-2ixc_j)^{-1/2} \\
&\rule{5mm}{0mm} \times \exp\left( \sum_{j=1}^{k_2} \frac{x^2 \Delta_j + 2ic_jx}{4ixc_j-2} \right)\\
&= \exp\left( -\frac{a^2x^2}{2}-ix \sum_{j=1}^{k_1} b_j +\sum_{j=1}^{k_2}\frac{x^2\Delta_j+2ic_j x}{4ixc_j-2} \right)\\
&\rule{5mm}{0mm} \times \prod _{j=1}^{k_1} (1-2ixb_j)^{-1/2} \, \prod _{j=1}^{k_2} (1-2ixc_j)^{-1/2}.
\end{align*}
Notice that
$
\frac{d}{dx} \frac{1}{\sqrt{1-2ix\alpha}} = \frac{1}{\sqrt{1-2ix\alpha}}\, \frac{i\alpha}{1-2ix\alpha},
$
for every real constant $\alpha$, thus $\varphi'_X(x)$ equals: 
\begin{align*}
&\varphi_X(x ) \left( -xa^2-i\sum_{j=1}^{k_1}b_j +\sum_{j=1}^{k_2}\frac{2x\Delta_j+2ic_j}{4ixc_j-2} -\sum_{j=1}^{k_2} \frac{(x^2\Delta_j+2ic_jx)\, 4ic_j}{(4ixc_j-2)^2} \right. \\
&\rule{5mm}{0mm} \left.+\sum_{j=1}^{k_1} \frac{ib_j}{1-2ib_j} + \sum_{j=1}^{k_2}\frac{ic_j}{1-2ixc_j}\right) \\
&= \varphi_X(x)\left( -xa^2 - i\sum_{j=1}^{k_1}b_j  - x\sum_{j=1}^{k_2} \frac{\Delta_j}{1-2ixc_j} - x^2 \sum_{j=1}^{k_2} \frac{ic_j\Delta_j}{(1-2ixc_j)^2}\right. \\
&\rule{5mm}{0mm} \left. + 2x \sum_{j=1}^{k_2} \frac{c_j^2}{(1-2ixc_j)^2} +\sum_{j=1}^{k_1}\frac{ib_j}{1-2ixb_j} \right).
\end{align*}
Multiplying by $\prod_{j=1}^{k_1}(1-2ixb_j)\, \prod_{j=1}^{k_2} (1-2ixc_j)^2\ne 0$ yields:
\begin{align*}
&\varphi_X'(x)\prod_{j=1}^{k_1}(1-2ixb_j)\, \prod_{j=1}^{k_2} (1-2ixc_j)^2 \\
&= \varphi_X(x)\left( -xa^2 - i\sum_{j=0}^{k_1}b_j \right)\prod_{j=1}^{k_1}(1-2ixb_j)\, \prod_{j=1}^{k_2} (1-2ixc_j)^2\\
&\rule{5mm}{0mm}+ \varphi_X(x) \prod_{j=1}^{k_1} (1-2ixb_j)  \sum_{l=1}^{k_2} \left(\prod_{j\ne l} (1-2ixc_j)^2 \right)\left(2 x c_l^2 -x\Delta_l(1-2ixc_l) -x^2 i c_l\Delta_l  \right) \\
&\rule{5mm}{0mm} + \varphi_X(x)\prod_{j=1}^{k_2} (1-2ix c_j)^2 \sum_{l=1}^{k_1} ib_l \prod_{j\ne l} (1-2ixb_j).
\end{align*} 
Thus $\varphi_X$ is  a solution of the initial value problem.   { The uniqueness of the solution follows  with the Cauchy-Lipschitz theorem since 
\begin{align*}
&x\mapsto -xa^2 - i  \sum_{j=1}^{k_1}b_j  - x\sum_{j=1}^{k_2} \frac{\Delta_j}{1-2ixc_j} - x^2 \sum_{j=1}^{k_2} \frac{ic_j\Delta_j}{(1-2ixc_j)^2} \\
&\rule{5mm}{0mm}+ 2x \sum_{j=1}^{k_2} \frac{c_j^2}{(1-2ixc_j)^2}  + \sum_{j=1}^{k_1}\frac{ib_j}{1-2ixb_j}
\end{align*}
is continuous and bounded on every (real) interval.}
\end{proof}

\begin{remark}
Notice that it may be possible to simplify the differential equation if not all coefficients are pairwise different. In Theorem \ref{230416a}, \ref{030416aa} and \ref{030416aab}, this simplification may yield a polynomial of smaller degree. For a special case, this problem is discussed in Theorem \ref{240416c} and Remark \ref{141018f}. For the rest of this section we shall allow that not all coefficients are pairwise different. In the case of pairwise different coefficients, the differential equation cannot be simplified and  the same observation holds for the polynomials in the previously cited theorems.
\end{remark}

\begin{lemma}\label{140616g}
Consider $k_1, k_2\ge 0$ and 
\[X=I_1(f_1)+I_2(f_2)=a N + \sum_{i=1}^{k_1} b_i(R_i^2-1)+\sum_{i=1}^{k_2} [c_i(P_i^2-1)+d_iP_i],\] as defined in Eq.~\eqref{210416a} and \eqref{070416aaa} where   $N, R_1, \ldots, R_{k_1}, P_1,\ldots, P_{k_2} \stackrel{\textrm{i.i.d.}}{\sim}\mathcal{N}(0, 1)$. 
Then, for $r\ge 2$:
\begin{align*}
\kappa_r(X)= 
a^2 1_{[r=2]} +
\sum_{j=1}^{k_1} 2^{r-1}(r-1)! b_j^r
+
\sum_{j=1}^{k_2} [2^{r-1}(r-1)! c_j^r + 2^{r-3}r! c_j^{r-2}d_j^2].
\end{align*}
\end{lemma}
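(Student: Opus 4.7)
\medskip

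\noindent\textbf{Proof plan.} The plan is to exploit the independence of the summands appearing in the representation \eqref{070416aaa}: since cumulants are additive on independent random variables,
\begin{align*}
\kappa_r(X) = \kappa_r(aN) + \sum_{i=1}^{k_1}\kappa_r\bigl(b_i(R_i^2-1)\bigr) + \sum_{j=1}^{k_2}\kappa_r\bigl(c_j(P_j^2-1)+d_jP_j\bigr),
\end{align*}
and it suffices to evaluate each of the three contributions separately for $r\ge 2$. For the Gaussian term, all cumulants of order $\ge 3$ of a centered normal vanish and $\kappa_2(aN)=a^2$, producing the term $a^2\mathbf{1}_{[r=2]}$. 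For the centered chi-square term, $R_i^2\sim \chi^2_1$ has cumulants $2^{r-1}(r-1)!$; since $r\ge 2$, the subtraction of $1$ is invisible, and homogeneity of degree $r$ (i.e.\ $\kappa_r(b_i Y)=b_i^r\kappa_r(Y)$) yields $2^{r-1}(r-1)!\, b_i^r$.

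The only slightly delicate summand is the mixed one. I would complete the square as in the proof of Lemma \ref{030416d}, writing
\begin{align*}
c_j(P_j^2-1)+d_jP_j = c_j\Bigl(P_j+\tfrac{d_j}{2c_j}\Bigr)^2 - \tfrac{d_j^2+4c_j^2}{4c_j}.
\end{align*}
The random variable $\bigl(P_j+\tfrac{d_j}{2c_j}\bigr)^2$ has a non-central $\chi^2_1(\lambda_j)$ distribution with non-centrality $\lambda_j := d_j^2/(4c_j^2)$, and its $r$-th cumulant is the classical value $2^{r-1}(r-1)!(1+r\lambda_j)$. Scaling by $c_j^r$ (and dropping the irrelevant constant shift, since $r\ge 2$) gives
\begin{align*}
\kappa_r\bigl(c_j(P_j^2-1)+d_jP_j\bigr) = 2^{r-1}(r-1)!\, c_j^r + 2^{r-1}(r-1)!\, c_j^r \cdot \tfrac{r d_j^2}{4 c_j^2} = 2^{r-1}(r-1)!\, c_j^r + 2^{r-3}r!\, c_j^{r-2}d_j^2.
\end{align*}
Summing the three contributions yields the stated formula.

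The only non-completely-routine point is to justify the non-central $\chi^2_1(\lambda)$ cumulant formula; this is standard, but for self-containment I would either cite it or derive it by expanding
\begin{align*}
\log\bigl[(1-2it)^{-1/2}\exp\bigl(\tfrac{i\lambda t}{1-2it}\bigr)\bigr] = \tfrac{1}{2}\sum_{n\ge 1}\tfrac{(2it)^n}{n} + \sum_{n\ge 1}\lambda\, 2^{n-1} (it)^n,
\end{align*}
and reading off the coefficient of $(it)^r/r!$, which gives exactly $2^{r-1}(r-1)!(1+r\lambda)$. Alternatively, the same identity can be extracted directly from the explicit form of $\varphi_X$ in Lemma \ref{030416d} by a logarithmic differentiation, but the independent-summand route above is the shortest.
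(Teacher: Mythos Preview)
Your proof is correct and follows essentially the same route as the paper: both use additivity of cumulants over the independent summands, the standard $\chi^2_1$ cumulants for the $b_i(R_i^2-1)$ terms, and the completing-the-square reduction to a non-central $\chi^2_1$ for the mixed $c_j(P_j^2-1)+d_jP_j$ terms, invoking $\kappa_r(S^2)=2^{r-1}(r-1)!(1+r\mu^2)$ for $S\sim\mathcal{N}(\mu,1)$. Your extra remark on deriving the non-central $\chi^2$ cumulants from the log-characteristic function is a nice addition but not needed beyond what the paper assumes.
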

\begin{proof} 
We have $
\kappa_2(N)=a^2$, $\kappa_r(N)=0$ for $r>2$ and $\kappa_r(R_j^2-1)= 2^{r-1}(r-1)!$. We notice: \[
c_j(P_j^2-1)+d_jP_j= c_j\left( P_j+ \frac{d_j}{2c_j} \right)^2 -\frac{d_j^2}{4c_j}-c_j,
\]
hence for $r\ge 2$:
\[
\kappa_r(c_j(P_j^2-1)+d_jP_j)=c_j^r \kappa_r\left[ \left(P_j+\frac{d_j}{2c_j}\right)^2\right].
\]
Using the formula for the cumulants of the non-central $\chi^2$ distribution, we have for $S\sim \mathcal{N}(\mu, 1)$:
\[ \kappa_r(S^2)=2^{r-1} (r-1)! (1+r\mu^2),
\]
thus:
\[
\kappa_r[c_j(P_j^2-1)+d_jP_j]= 2^{r-1} (r-1)! c_j^r \left[ 1+ \left(\frac{d_j}{2c_j}\right)^2 r \right]= 2^{r-1}(r-1)! c_j^r + 2^{r-3}r! c_j^{r-2}d_j^2. 
\]
The result follows now with the independence of  the random variables.
\end{proof}

\noindent We can now prove the first part of our main  result:  a sufficient criterion for the convergence in law to $X$.

\begin{theorem}\label{030416aa}

Consider $k_1, k_2\ge 0$ and 
\[X=I_1(f_1)+I_2(f_2)=a N + \sum_{i=1}^{k_1} b_i(R_i^2-1)+\sum_{i=1}^{k_2} [c_i(P_i^2-1)+d_iP_i],\]
as defined in Eq.~\eqref{210416a} and \eqref{070416aaa} 
 where  $N, R_1, \ldots, R_{k_1}, P_1,\ldots, P_{k_2} \stackrel{\textrm{i.i.d.}}{\sim}\mathcal{N}(0, 1)$. Suppose that at least one of the parameters $a, k_1, k_2$ is non-zero.
Consider a sequence $\left\lbrace F_n\right\rbrace_n$ of non-zero random variables with $F_n=\sum_{i=1}^p I_i(f_{n, i})$ for $p\ge 2$ fixed and $\left\lbrace f_{n, i}\right\rbrace_n\subset H^{\odot i}$ for $1\le i \le p$. Define: \[P(x)=x^{1+1_{[a\ne 0]}} \prod_{j=1}^{k_1} (x-b_j)\prod_{j=1}^{k_2}(x-c_j)^2.\]
If the following conditions hold, as $n\to \infty$:
\begin{enumerate}[(1)]
\item $\displaystyle{ \kappa_r(F_n)\to \kappa_r(X),\quad\textrm{ for } r=1,\ldots,\textrm{deg}(P)}$,
\item $\displaystyle{ 
\E\left[ \left| \E\left[  \sum_{r=1}^{\textrm{deg}(P)} \frac{P^{(r)}(0)}{r!2^{r-1}} \left( \Gamma_{r-1}(F_n) - \E[\Gamma_{r-1}(F_n)] \right) \Big| F_n  \right] \right|  \right] \to 0 }$,
\end{enumerate}
then $F_n \stackrel{\textrm{Law}}{\to} X$, as $n\to \infty$.
\end{theorem}

\begin{proof} 
Notice that an empty product equals 1 and an empty sum equals 0. We prove the result for $a\ne 0$, the other case can be treated similarly.
We shall use and extend an idea of Nourdin and Peccati, see \cite[Paragraph 3.5]{nourdin2009b}. Since $\sup_n\kappa_2(F_n)<\infty$, we have with Chebychev's inequality that the  sequence $\left\lbrace F_n\right\rbrace_n$ is tight. We shall use the following corollary of Prokhorov's Theorem:
\textit{Consider a tight sequence $\left\lbrace F_n\right\rbrace_n$  of random variables. If every subsequence $\left\lbrace F_{n_k}\right\rbrace_k$ which converges in law has the same limit $Y$, then the initial sequence $\left\lbrace F_n\right\rbrace_n$ converges in law to $Y$.} We consider thus a subsequence  $\left\lbrace F_{n_k}\right\rbrace_k$ which converges in law to some random variable $Y$. 
We notice that
$\lim_k\kappa_2(F_{n_k})=\lim_n \kappa_2(F_n)=\kappa_2(X)$, hence $\sup_k \E[F_{n_k}^2]<\infty$.  Since $\left\lbrace F_n\right\rbrace_n$ lives in a fixed finite sum of Wiener chaoses, the hypercontractivity property  implies $\sup_k \E[|F_{n_k}|^r]<\infty$ for every $r\ge 2$. With  $F_{n_k}\stackrel{\textrm{Law}}{\to}Y$, as $k\to \infty$, we have $\E[Y^2]=\lim_k\E[F_{n_k}^2]=\lim_n\E[F_n^2]=\kappa_2(X)\ne 0$. Hence $Y$ is a non-zero random variable and we have $\lim_k \kappa_r(F_{n_k})=\kappa_r(Y)$ for every $r$. On the other hand we have $\lim_n\kappa_r (F_n)=\kappa_r(X) $  for $r=1,\ldots,\textrm{deg}(P)$, thus:
\[\kappa_r(X)=\kappa_r(Y),\quad \textrm{for }  r=1, \ldots,    \textrm{deg}(P).\] 
To simplify the notations and to avoid complicated indices we shall write from now on $\left\lbrace F_n\right\rbrace_n$ for the subsequence.  By the previous corollary, the proof is complete if we can prove that $\varphi_{F_n}$ converges to  $\varphi_X$. We shall prove this by showing that $\varphi_Y=\lim_n \varphi_{F_n}$ solves the initial value problem of Lemma \ref{030416d}. This implies that $\varphi_{Y}= \varphi_X$ and hence $Y\stackrel{\textrm{Law}}{=}X$. The proof is divided in 5 steps:
\begin{itemize}
\item Step 1: We show that $\varphi'_Y(0)=0$ and find an alternative representation for $\E[\exp(ixF_n)\Gamma_r(F_n)]$.
\item Step 2: We calculate: 
\begin{align}
\E\left[ \sum_{r=1}^{\textrm{deg}(P)} \frac{P^{(r)}(0)}{r!2^{r-1}}\exp(ixF_n)\Gamma_{r-1}(F_n) \right],\label{241116c}
\end{align}
and: 
\begin{align}
 \sum_{r=1}^{\textrm{deg}(P)} \frac{P^{(r)}(0)}{r!2^{r-1}}\E[\exp(ixF_n)]\E[\Gamma_{r-1}(F_n)].\label{241116d}
\end{align}
\item Step 3: We calculate $\varphi_Y'(x)\prod_{j=1}^{k_1} (1-2ixb_j)\prod_{j=1}^{k_2} (1-2ixc_j)^2$.
\item Step 4: We find an expression for: 
\begin{align*}
&\varphi_Y(x)\left( -xa^2 - i \sum_{j=0}^{k_1}b_j  \right)\prod_{j=1}^{k_1}(1-2ixb_j)\, \prod_{j=1}^{k_2} (1-2ixc_j)^2\\
&\rule{5mm}{0mm}+ \varphi_Y(x) \prod_{j=1}^{k_1} (1-2ixb_j)  \sum_{l=1}^{k_2} \prod_{j\ne l} (1-2ixc_j)^2 \left(2 x c_l^2 -x\Delta_l(1-2ixc_l) -x^2 i c_l\Delta_l   \right) \\
&\rule{5mm}{0mm} + \varphi_Y(x)\prod_{j=1}^{k_2}(1-2ixc_j)^2 \sum_{l=1}^{k_1} ib_l \prod_{j\ne l} (1-2ixb_j).
\end{align*} 
\item Step 5: The proof is completed by showing that the expressions found in the last two steps are equal. Then $\varphi_Y$ is the unique solution to the initial value problem in Lemma \ref{030416d}, $Y$ and $X$ are thus equal in distribution and $F_n\stackrel{\textrm{Law}}{\to}X$, as $n\to \infty$.
\end{itemize}
For the ease of notation, we define
$
k:=2k_2+k_1~,~ G_1(x):=\prod_{j=1}^{k_1}(1-2ixb_j)$ and $G_2(x):=\prod_{j=1}^{k_2}(1-2ixc_j)^2.
$

\textit{Step 1.} 
The random variable $Y$ is non-zero  and  has moments of every order.
We notice that for $x=0$, the differential equation of Lemma \ref{030416d} holds for $\varphi_Y$ if we can prove $\varphi_Y'(0)=0$. We notice that $\E[|F_n|]<\infty$, hence $\varphi_{F_n}'(0)=i\E[F_n]=0$. On the other hand $\lim_n \varphi_{F_n}'(x)=\varphi_Y'(x)$ for every $x\in \mathbb{R}$ since $\sup_n \E[F_n^2]<\infty$ and $iF_n\exp(ixF_n)\stackrel{\textrm{Law}}{\to} iY\exp(ixY)$ by the continuous mapping theorem. We have thus $\varphi_Y'(0)=\lim_n \varphi_{F_n}'(0)=0$. We suppose now that $x\ne 0$ and  calculate $\E[\exp(ixF_n) \Gamma_r F_n]$ for $r\in \mathbb{N}$.
For 
$r\in \left\lbrace0, 1\right\rbrace$:\begin{align*}
&\E[\exp(ixF_n) \Gamma_0 (F_n)]= -i\E[\exp(ixF_n)iF_n] = -i\varphi_{F_n}'(x),\\
&\E[\exp(ixF_n)\Gamma_1(F_n)]= \E[\exp(ixF_n)\langle DF_n,-DL^{-1}F_n\rangle_H]\\
&= \frac{1}{ix}\E[\langle D\exp(ixF_n),-DL^{-1}F_n\rangle_H]= \frac{1}{ix} \E[\exp(ixF_n)(-\delta DL^{-1} F_n)] \\
&= \frac{1}{ix}\E[\exp(ixF_n)F_n]= -\frac{i}{ix} \varphi_{F_n}'(x),
\end{align*}
and for $r>1$:
\begin{align*}
&\E[\exp(ixF_n)\Gamma_r{(F_n)}]= \E[\exp(ixF_n)\langle DF_n,-DL^{-1}\Gamma_{r-1}(F_n)\rangle_H]\\
&= \frac{1}{ix}\E[\langle D\exp(ixF_n), -DL^{-1}\Gamma_{r-1}(F_n)\rangle_H]= \frac{1}{ix}\E[ \exp(ixF_n) [-\delta D L^{-1} \Gamma_{r-1} (F_n)] ]\\
&=\frac{1}{ix}\E[\exp(ixF_n) [\Gamma_{r-1}(F_n) - \E[\Gamma_{r-1}(F_n)]] ]\\
&= \frac{1}{ix} \E[\exp(ixF_n) \Gamma_{r-1}(F_n)]- \frac{1}{ix}\E[\exp(ixF_n)] \E[\Gamma_{r-1}(F_n)]. 
\end{align*}
Iteration yields:
\begin{align}
\E[\exp(ixF_n)\Gamma_r(F_n)]&= -{i}{(ix)^{-r}} \varphi_{F_n}'(x)-\varphi_{F_n}(x)\sum_{j=1}^{r-1} (ix)^{-j}\E[\Gamma_{r-j}(F_n)] .\label{030416f}
\end{align}
\textit{Step 2.} We calculate now the sum in \eqref{241116c}. We have:
\begin{align*}
 &-i\varphi_{F_n}'(x) \sum_{r=1}^{k+2} \frac{P^{(r)}(0)}{r!2^{r-1}} (ix)^{-r+1}= -i\varphi_{F_n}'(x) (2ix) \sum_{r=1}^{k+2} \frac{P^{(r)}(0)}{r!} (2ix)^{-r}\\
&= -i\varphi_{F_n}'(x)(2ix) P\left( \frac{1}{2ix} \right)\\
&= -i\varphi_{F_n}'(x) (2ix) \left( \frac{1}{2ix} \right)^{2} \prod_{j=1}^{k_1} \left( \frac{1}{2ix}-b_j \right) \prod_{j=1}^{k_2} \left( \frac{1}{2ix}-c_j \right)^2\\ 
&= -i\varphi_{F_n}'(x)(2ix)^{-1-k}G_1(x)G_2(x).
\end{align*}
With Eq.~\eqref{030416f} we have thus:
\begin{align}
&\E\left[\sum_{r=1}^{k+2}\frac{P^{(r)}(0)}{r!2^{r-1}} \exp(ixF_n)\Gamma_{r-1}(F_n)\right] = -i\varphi_{F_n}'(x) \sum_{r=1}^{k+2} \frac{P^{(r)}(0)}{r!2^{r-1}} (ix)^{-r+1}+ \varphi_{F_n}(x) \nonumber \\
&\rule{5mm}{0mm}\times\E\left[\sum_{r=3}^{k+2}\frac{P^{(r)}(0)}{r!2^{r-1}}  
\left( \frac{- \E[\Gamma_{r-2}(F_n)]}{(ix)^{1}} - \frac{ \E[\Gamma_{r-3}(F_n)]}{(ix)^{2}} -\ldots - \frac{\E[\Gamma_1(F_n)]}{(ix)^{(r-2)}} \right)
\right]\nonumber \\
&= - i\varphi_{F_n}'(x) \, (2ix)^{-k-1} G_1(x)G_2(x)\nonumber  \\
&\rule{5mm}{0mm}-\frac{ \varphi_{F_n}(x)}{(ix)^{1}}\left[ \frac{P^{(3)}(0)}{3!2^2} \E[\Gamma_1(F_n)]+\frac{P^{(4)}(0)}{4!2^3} \E[\Gamma_2(F_n)] +\ldots \right.\nonumber\\
&\rule{5mm}{0mm} \left.+ \frac{P^{(k+2)}(0)}{(k+2)!2^{k+1}} \E[\Gamma_{k}(F_n)]\right]\nonumber \\
&\rule{5mm}{0mm}- \frac{\varphi_{F_n}(x)}{(ix)^{2}}\left[ \frac{P^{(4)}(0)}{4!2^3} \E[\Gamma_1(F_n)]+\frac{P^{(5)}(0)}{5!2^4} \E[\Gamma_2(F_n)] +\ldots\right.\nonumber \\
&\rule{5mm}{0mm} \left. + \frac{P^{(k+2)}(0)}{(k+2)!2^{k+1}} \E[\Gamma_{k-1}(F_n)]\right]-\ldots - \frac{ \varphi_{F_n}(x)}{(ix)^{k}} \, \frac{P^{(k+2)}(0)}{(k+2)!2^{k+1}} \E[\Gamma_1(F_n)].  \label{040416a}
\end{align}
We calculate the sum in \eqref{241116d} using 
 $\E[\Gamma_r(F_n)]=\kappa_{r+1}(F_n)/r!$ and $\E[\Gamma_0(F_n)]=\E[F_n]=0$:
\begin{align}
\sum_{r=1}^{k+2} \frac{P^{(r)}(0)}{r!2^{r-1}}\E[\exp(ixF_n)]\E[\Gamma_{r-1}(F_n)]&= 
 \sum_{r=2}^{k+2} \frac{P^{(r)}(0)}{r!2^{r-1}}\frac{\kappa_r(F_n)}{(r-1)!}\varphi_{F_n}(x)
 .\label{030416c}
\end{align}
\textit{Step 3.} 
We have thus for the limit of the  expression on the right-hand side of Eq.~\eqref{030416c}, as $n\to \infty$:
\begin{align}
&\varphi_Y(x) \sum_{r=2}^{k+2} \frac{P^{(r)}(0)}{r!2^{r-1}}
\frac{\kappa_r(X)}{(r-1)!}=  \varphi_Y(x)\frac{P''(0)}{2! 2}\,a^2\nonumber + \varphi_Y(x)\\
 &\rule{5mm}{0mm} \times \sum_{r=2}^{k+2} \frac{P^{(r)}(0)}{r!2^{r-1}} \left[ \sum_{j=1}^{k_1} \frac{2^{r-1}(r-1)! b_j^r}{(r-1)!}+
 \sum_{j=1}^{k_2}\left( \frac{2^{r-1}(r-1)! c_j^{r}+ 2^{r-3}r! c_j^{r-2}d_j^2}{(r-1)!} \right)\right] \nonumber\\
 &= \varphi_Y(x)\left[\sum_{j=1}^{k_1}\sum_{r=1}^{k+2} \frac{P^{(r)}(0)}{r!} b_j^r +   \sum_{j=1}^{k_2}  \sum_{r=1}^{k+2} \frac{P^{(r)}(0)}{r!}c_j^r + \sum_{j=1}^{k_2}
 \frac{d_j^2}{4c_j} \sum_{r=1}^{k+2} \frac{P^{(r)}(0)}{r!}r \, c_j^{r-1} \right.\nonumber \\
 &\rule{5mm}{0mm} \left.+ \frac{(-1)^{k_1}a^2}{2}   \prod_{j=1}^{k_2}c_j^2 \prod_{j=1}^{k_1} b_j\right]\nonumber \\
&= \varphi_Y(x) \left[ \sum_{j=1}^{k_1}P(b_j) +  \sum_{j=1}^{k_2} P(c_j) +   \sum_{j=1}^{k_2} c_j^{-1}d_j^2 P'(c_j)/4 + \frac{1}{2}  (-1)^{k_1} a^2 \prod_{j=1}^{k_2}c_j^2 \prod_{j=1}^{k_1} b_j \right]\nonumber\\
&= \frac{1}{2}\varphi_Y(x) (-1)^{k_1} a^2 \prod_{j=1}^{k_2}c_j^2 \prod_{j=1}^{k_1} b_j .\label{040416b}
  \end{align} 
We have with $M_{r-1}:= \Gamma_{r-1}(F_n)- \E[\Gamma_{r-1}(F_n)]$, as $n\to \infty$:
\begin{align*}
\left|\E\left[ \exp(ixF_n)\sum_{r=1}^{k+2}\frac{P^{(r)}(0)}{r!2^{r-1}} M_{r-1}(F_n) \right]\right| 
&\le\E\left[ \left|\E\left[ \sum_{r=1}^{k+2}\frac{P^{(r)}(0)}{r!2^{r-1}} M_{r-1}(F_n) \Big| F_n \right]\right| \right]\to 0,
\end{align*}
hence:
\begin{align*}
&\lim_n \E\left[ \exp(ixF_n)\sum_{r=1}^{k+2}\frac{P^{(r)}(0)}{r!2^{r-1}} \Gamma_{r-1}(F_n) \right]\\
 &= \lim_n \E\left[ \exp(ixF_n)\sum_{r=1}^{k+2}\frac{P^{(r)}(0)}{r!2^{r-1}} M_{r-1}(F_n) \right] +  \lim_n \varphi_{F_n}(x) \sum_{r=1}^{k+2}\frac{P^{(r)}(0)}{r!2^{r-1}} \E[\Gamma_{r-1}(F_n) ] \\
 &= 0+  \lim_n \varphi_{F_n}(x) \sum_{r=1}^{k+2}\frac{P^{(r)}(0)}{r!2^{r-1}} \E[\Gamma_{r-1}(F_n) ] ,
\end{align*}
using Eq.~\eqref{040416a} and Eq.~\eqref{040416b}, we have with $\lim_n \E[\Gamma_r(F_n)]=\kappa_{r+1}(X)/r!$ for $r=1,\ldots,\textrm{deg}(P)-1$:
\begin{align}
&-i\varphi_{Y}'(x) (2ix)^{-k-1} G_1(x) G_2(x)
-\sum_{m=1}^{k}  \varphi_Y(x) (ix)^{-m} \sum_{r=m}^{k} \frac{P^{(r+2)}(0)}{(r+2)!2^{r+1}} \frac{\kappa_{r-m+2}(X)}{(r-m+1)!}\nonumber\\
&= \frac{1}{2} \varphi_Y(x) (-1)^{k_1} a^2 \prod_{j=1}^{k_2}c_j^2 \prod_{j=1}^{k_1} b_j .\label{040416c}
\end{align}
We have used that $\lim_n \varphi_{F_n}'(x)=\lim_n \varphi_Y'(x)$ pointwise. This can be seen using the continuous mapping theorem and $\sup_n \E[F_n^2]<\infty$. Multiplying Eq.~\eqref{040416c} by $i (2ix)^{k+1}$ yields:
\begin{align}
\varphi_Y'(x) G_1(x)G_2(x) &= \sum_{m=1}^{k}  (2ix)^{k+1} i\varphi_Y(x) (ix)^{-m} \sum_{r=m}^{k} \frac{P^{(r+2)}(0)}{(r+2)!2^{r+1}} \frac{\kappa_{r-m+2}(X)}{(r-m+1)!}\nonumber\\
&\rule{5mm}{0mm}+ i\varphi_Y(x) (2ix)^{k+1} \frac{1}{2} (-1)^{k_1} a^2 \prod_{j=1}^{k_2}c_j^2 \prod_{j=1}^{k_1} b_j .\label{170816a}
\end{align}
\textit{Step 4.}
We have: 
\begin{align}
&\lim_n \varphi_{F_n}(x)\Bigl[\Bigl( -xa^2 - i\sum_{j=1}^{k_1}b_j \Bigr)\prod_{j=1}^{k_1}(1-2ixb_j)\, \prod_{j=1}^{k_2} (1-2ixc_j)^2\Bigr. \nonumber \\
&\rule{5mm}{0mm}+ \Bigl.\prod_{j=1}^{k_2} (1-2ixc_j)^2\sum_{l=1}^{k_1} ib_l \prod_{j\ne l} (1-2ixb_j)\Bigr.\nonumber\\
&\rule{5mm}{0mm}+ \Bigl.   \prod_{j=1}^{k_1} (1-2ixb_j)  \sum_{l=1}^{k_2}  \left(\prod_{j\ne l} (1-2ixc_j)^2 \right)\left[2 x c_l^2 -x\Delta_l(1-2ixc_l) -x^2 i c_l\Delta_l \right] \Bigr] \nonumber\\
&=i\varphi_Y(x) \left(ixa^2-\sum_{j=1}^{k_1}b_j\right) G_1(x)G_2(x)+ i\varphi_Y(x) G_2(x) \sum_{l=1}^{k_1} b_l \prod_{j\ne l} (1-2ixb_j)\nonumber\\
&\rule{5mm}{0mm}+ i\varphi_Y(x) G_1(x) \sum_{l=1}^{k_2} \left(\prod_{j\ne l} (1-2ixc_j)^2 \right) \left[ ix \Delta_l (1-2ixc_l) + (ix)^2 c_l\Delta_l  - 2 ix c_l^2 \right] \nonumber\\
&= i\varphi_Y(x) ix a^2 G_1(x)G_2(x) +i\varphi_Y(x)G_2(x) \sum_{l=1}^{k_1} [-b_l(1-2ixb_l)+b_l] \prod_{j\ne l } (1-2ixb_j)\nonumber\\
&\rule{5mm}{0mm}+ i\varphi_Y(x) G_1(x) \sum_{l=1}^{k_2} \left(\prod_{j\ne l} (1-2ixc_j)^2 \right) \left[ ix \Delta_l (1-2ixc_l) + (ix)^2 c_l\Delta_l  - 2 ix c_l^2 \right] \nonumber\\
&= i\varphi_Y(x) ix a^2 G_1(x)G_2(x) + i\varphi_Y(x) G_2(x) 2ix \sum_{l=1}^k b_l^2 \prod_{j\ne l}(1-2ixb_j)+ i\varphi_Y(x) G_1(x)\nonumber\\
&\rule{5mm}{0mm}\times  \sum_{l=1}^{k_2} \left(\prod_{j\ne l} (1-2ixc_j)^2 \right) \left[ ix \Delta_l (1-2ixc_l) + (ix)^2 c_l\Delta_l  - 2ix c_l^2 \right]. \label{020416a}
\end{align}
\textit{Step 5.}
We have that $Y\stackrel{\textrm{Law}}{=}X$ if and only if the right-hand side of Eq.~\eqref{020416a} equals
$\varphi_Y'(x) G_1(x)G_2(x) $ or, using the previous results and Eq.~\eqref{170816a} in particular, if the following equality holds:
 \begin{align}
&\sum_{l=1}^{k} (2ix)^{k+1} i\varphi_Y(x) (ix)^{-l} \sum_{r=l}^{k} \frac{P^{(r+2)}(0)}{(r+2)!2^{r+1}}  \frac{\kappa_{r-l+2}(X)}{(r-l+1)!} \nonumber\\
&\rule{5mm}{0mm}+i\varphi_Y(x)(2ix)^{k+1}\frac{ (-1)^{k_1}a^2}{2}\prod_{j=1}^{k_2} c_j^2\, \prod_{j=1}^{k_1} b_j \nonumber \\
&= i\varphi_Y(x)ix a^2 G_1(x)G_2(x) + i\varphi_Y(x) 2ix G_2(x) \sum_{l=1}^{k_1} b_l^2 \prod_{j\ne l}(1-2ixb_j)\nonumber\\
&\rule{5mm}{0mm}+ i\varphi_Y(x) G_1(x) \sum_{l=1}^{k_2} \left(\prod_{j\ne l} (1-2ixc_j)^2 \right) \left[ ix \Delta_l (1-2ixc_l) + (ix)^2 c_l\Delta_l  - 2ix c_l^2 \right]. \label{040416dd}
 \end{align}
If $\varphi_Y(x)\ne 0$, we can divide by $i\varphi_Y(x)$ and compare the coefficients of $x, x^2,  x^{3}, \ldots$ on the left- and right-hand side of Eq.~\eqref{040416dd}. For this final part of the proof, see Appendix.  
 Considering the previous remarks, this concludes the proof.
\end{proof}

\begin{remark} \label{230416b}
\begin{enumerate}[(1)]
\item  {Notice that the proof of Eq.~\eqref{040416dd} for the general case is rather lengthy and technical. This is basically due to the differential equation derived in Lemma  \ref{030416d} from which follows  
Eq.~\eqref{040416dd}. For special cases, such as the case considered in \citep{azmoodeh}, the differential equation simplifies considerably and therefore a relatively simple recurrence for the moments of the target variable can be proved. In \cite{azmoodeh}, the authors have used this recurrence to prove their main result. In the general case however, this recurrence is hard to handle, therefore we have chosen to use differential equations rather than recurrence relations. \\
We illustrate now how the proof of  
 the crucial equation can be simplified for the class of target variables considered in \cite{azmoodeh}. Notice that for $a=c_i=d_i=0$, $k=k_1$ and $X\stackrel{Law}{=} \sum_{i=1}^k b_i (N_i^2-1)$ with $N_1, \ldots, N_k\stackrel{\textrm{i.i.d.}}{\sim}\mathcal{N}(0, 1)$,  
we have to check the following equation:
\begin{align}
&\sum_{l=1}^{k-1} \frac{(2ix)^k}{ (ix)^{l}} \sum_{r=l}^{k-1} \frac{P^{(r+2)}(0)}{(r+2)!2^{r+1}} \frac{\kappa_{r-l+2}(X)}{(r-l+1)!}-(-1)^k (2ix)^k
\sum_{i=1}^k b_i \prod_{j=1}^k b_j\nonumber\\
&=2ix \sum_{m=1}^k b_m^2 \prod_{j\ne m} (1-2ixb_j), \label{281216a}
\end{align} 
where $P(x):=x\prod_{i=1}^k (x-b_i)$. We use the following relation which can be proved by induction over $l$:
\begin{align}
\sum_{r=l}^{k-1} \frac{P^{(r+2)}(0)}{(r+2)! } \sum_{j=1}^k b_j^{r+2-l} = (-1)^{k-l-1} \sum_{j=1}^k b_j^2 T_{k-1-l}^{(j)},
\end{align}
where $1\le l \le k-1$ and:
\[T_m^{(j)}:= \sum_{i_1<\ldots <i_m \atop i_1, \ldots, i_m\ne j} b_{i_1}\times \ldots \times b_{i_m},\quad 1 \le m \le k-2 
\]
and $T_0^{(j)}:=1$, see Definition \ref{040316d} for details. We have thus for the left-hand side of Eq.~\eqref{281216a}:
\begin{align*}
&\sum_{l=1}^{k-1}(2ix)^k (ix)^{-l} \sum_{r=l}^{k-1} \frac{P^{(r+2)}(0)}{(r+2)!2^{r+1}} \frac{\kappa_{r-l+2}(X)}{(r-l+1)!}-(-1)^k (2ix)^k
\sum_{i=1}^k b_i \prod_{j=1}^k b_j \\
&= \sum_{l=1}^{k-1} (2ix)^k (ix)^{-l} \sum_{r=l}^{k-1} \frac{P^{(r+2)}(0)}{(r+2)!2^{r+1}}\sum_{j=1}^k \frac{2^{r-l+1}(r-l+1)!b_j^{r-l+2}}{(r-l+1)!} \\
&\rule{5mm}{0mm}-  (-1)^k (2ix)^k
\sum_{i=1}^k b_i \prod_{j=1}^k b_j \\
&= \sum_{l=1}^{k-1} (2ix)^{k-l} (-1)^{k-l-1} \sum_{j=1}^k b_j^2 T_{k-1-l}^{(j)} - (-1)^k (2ix)^k
\sum_{i=1}^k b_i \prod_{j=1}^k b_j,
\end{align*}
and for the right-hand side of Eq.~\eqref{281216a}:
\begin{align*}
&2ix \sum_{m=1}^k b_m^2 \sum_{j=1}^{k-1} \sum_{i_1 < \ldots < i_j \atop i_1,\ldots, i_j\ne m} (-2ixb_{i_1}) \times \ldots \times (-2ixb_{i_j}) + 2ix \sum_{m=1}^k b_m^2 \\
&= \sum_{j=0}^{k-1} (2ix)^{1+j} (-1)^j \sum_{m=1}^k b_m^2 T_j^{(m)} =\sum_{l=0}^{k-1} (2ix)^{k-l} (-1)^{k-l-1} \sum_{m=1}^k b_m^2 T_{k-l-1}^{(m)} \\ 
&=\sum_{l=1}^{k-1} (2ix)^{k-l} (-1)^{k-l-1} \sum_{m=1}^k b_m^2 T_{k-l-1}^{(m)} + (2ix)^k (-1)^{k-1} \sum_{m=1}^k b_m \, b_mT_{k-1}^{(m)}\\ 
&=\sum_{l=1}^{k-1} (2ix)^{k-l} (-1)^{k-l-1} \sum_{m=1}^k b_m^2 T_{k-l-1}^{(m)} + (2ix)^k (-1)^{k-1} \sum_{m=1}^k b_m  \prod_{j=1}^k b_j.
\end{align*}
This proves that Eq.~\eqref{281216a} holds. \\
In the general case, 
 the calculation of $\sum_{r=l}^{\textrm{deg}(P)-2} \frac{P^{(r+2)}(0)}{(r+2)!2^{r+1}} \frac{\kappa_{r-l+2}(X)}{(r-l+1)!}$ is lengthy. The following relations are needed and can be proved by induction over $l$, for the ease of notation define $k':=2k_2+k_1+1_{[a\ne 0]}$:
\begin{align}
&\sum_{r=l}^{k'-1}\frac{P^{(r+2)}(0)}{(r+2)!} b_j^{r+2-l} = (-1)^{k'-l-1} \sum_{j=1}^{k_1}b_j^2 \sum_{i_1+i_2+i_3\atop =k'-1-l} T_{i_1}^{(j)}S_{i_2} S_{i_3}, \nonumber \\
&\sum_{r=l}^{k'-1}\frac{P^{(r+2)}(0)}{(r+2)!}\sum_{j=1}^{k_2}c_j^{r+2-l} = (-1)^{k'-l-1} \nonumber\\
&\rule{5mm}{0mm}\times\sum_{j=1}^{k_2}\left( c_j^2 \sum_{i_1+i_2+i_3\atop = k'-1-l}T_{i_1}S_{i_2}^{(j)}S_{i_3}^{(j)} +c_j^3 \sum_{i_1+i_2+i_3\atop =  k'-2-l}T_{i_1}S_{i_2}^{(j)}S_{i_3}^{(j)}\right), \label{281216b}\end{align}
and:
\begin{align}
&\sum_{r=l}^{k'-1} \frac{P^{(r+2)}(0)}{(r+2)!} \sum_{j=1}^{k_2}(r-l+2)c_j^{r-l}d_j^2 \nonumber\\
&= (-1)^{k'-l-1} \sum_{j=1}^{k_2}\left( 2d_j^2 \sum_{i_1+i_2+i_3 \atop = k'-l-1} T_{i_1} S_{i_2}^{(j)}S_{i_3}^{(j)} + cjd_j^2 \sum_{i_1+i_2+i_3 \atop = k'-l-2} T_{i_1} S_{i_2}^{(j)}S_{i_3}^{(j)}\right)\label{281216c},
\end{align}
where  $S_i$ and $S_i^{(j)}$ are defined similarly to $T_i^{(j)}$ with the coefficients $b_1, \ldots, b_{k_1}$ replaced by $c_1, \ldots, c_{k_2}$, see Definition \ref{040316d} for details.  \\
In particular,  equations \eqref{281216b} and \eqref{281216c} imply that the proof of Eq.~\eqref{040416dd} is technical.  In Proposition \ref{291216d} we present a proof of the latter equation which does not require Eq. \eqref{281216b} and \eqref{281216c}. The coefficients of the polynomials on both sides of Eq.~\eqref{040416dd} are  compared directly. This proof still remains complicated and lengthy. As mentioned above, this is due to the form of the differential equation for the characteristic function of the target variable in the general case. }
\item Theorem \ref{030416aa}   extends \cite[Theorem 3.2,(ii) $\to$ (i)]{azmoodeh} since it holds for a more general set of target random variables $X$ and we have $L^1$-convergence instead of $L^2$-convergence.
\item We notice that for the proof it is essential that, as $n\to \infty$: 
\begin{align*}
& \E\left[  \exp(ixF_n)  \sum_{r=1}^{\textrm{deg}(P)} \frac{P^{(r)}(0)}{r!2^{r-1}}  M_{r-1}(F_n)      \right]\\
&=  \E\left[  \exp(ixF_n) \E\left[ \sum_{r=1}^{\textrm{deg}(P)} \frac{P^{(r)}(0)}{r!2^{r-1}} M_{r-1}(F_n)   \Big|F_n  \right]  \right]  \to 0 .
\end{align*}
Since $|\exp(ixF_n)|=1$, the triangle inequality shows that it is sufficient to have:
\[
\E\left[ \left| \E\left[ \sum_{r=1}^{\textrm{deg}(P)} \frac{P^{(r)}(0)}{r!2^{r-1}} \left( \Gamma_{r-1}(F_n) - \E[\Gamma_{r-1}(F_n)] \right)   \Big|F_n  \right] \right| \right]  \to 0,
\]
as $n\to \infty$. This $L^1$-convergence  is weaker than  $L^2$-convergence which results typically from the   Cauchy-Schwarz inequality, used to control  unbounded factors.

\item Different random variables $X$ may lead to the same polynomial $P$. Let $N$ be  a standard normal variable,  $X_1\stackrel{\textrm{Law}}{=}N^2-1+d_1N$ and $X_2\stackrel{\textrm{Law}}{=}N^2-1+d_2 N$ with $d_1\ne d_2$  non-zero, then Theorem \ref{030416aa}
yields the polynomial $P(x)=x(x-1)^2$. A similar observation holds for 
\cite[Theorem 3.4]{ECP2023} where the constant $\mu_0$ only appears in the limit of the second cumulant.
In our case,    the condition (1) of  Theorem \ref{030416aa}
discerns both cases. For special cases it may be useful to consider a polynomial which is different from the \lq standard polynomial\rq ~defined in Theorem \ref{030416aa}. This observation is founded in the initial value problem established in  Lemma \ref{030416d}. It can be shown that the differential equation in Lemma \ref{030416d} can be simplified if   the $b_i$ or the $(c_j, d_j)$ are not pairwise different. For instance, if $X_1\stackrel{\textrm{Law}}{=}(N_1^2-1)+ (N_2^2-1)$ and $N_1, N_2\stackrel{\textrm{i.i.d}}{\sim}\mathcal{N}(0, 1)$, Lemma \ref{030416d} yields a differential equation which can be simplified to yield \cite[Eq. (1.9)]{nourdin2009b}.

\end{enumerate}  
\end{remark}

We proceed now to the proof of the converse of Theorem \ref{030416aa}. We shall need the following Lemma which generalizes \cite[Eq.~(3.6)]{azmoodeh}.

\begin{lemma}\label{200416a}
Let $P(x):=x^{1+1_{[a\ne 0]}} \prod_{i=1}^{k_1} (x-b_i) \prod_{i=1}^{k_2}(x-c_i)^2$ and $X=I_1(f_1) + I_2(f_2)$, see Eq.~\eqref{210416a}, then:
\begin{align*}
\p\left( \sum_{r=1}^{\textrm{deg}(P)} \frac{P^{(r)}(0)}{r!2^{r-1}} \left(  
\Gamma_{r-1}(X) - \E[\Gamma_{r-1}(X)]
\right)=0\right)=1.
\end{align*}
\end{lemma}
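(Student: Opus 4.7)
The plan is to compute $\Gamma_r(X)$ (and hence $M_r(X) = \Gamma_r(X)-\E[\Gamma_r(X)]$) explicitly for $X$ of the form in Eq.~\eqref{210416a}--\eqref{070416aaa}, and then to recognise the sum as a linear combination of values $P(b_i)$, $P(c_j)$, $P'(c_j)$ and $P'(0)$, each of which vanishes by construction of $P$.

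The first step is to establish additivity of the $\Gamma_r$ operators across the independent summands. Since the summands $aN$, $b_i(R_i^2-1)$, $c_j(P_j^2-1)+d_jP_j$ are functionals of elements $h_0, h_1,\ldots,h_{k_1}, h'_1,\ldots, h'_{k_2}$ of an orthonormal system of $H$, their Malliavin derivatives live in pairwise orthogonal one-dimensional subspaces of $H$, and the same holds for $DL^{-1}$ applied to them. An induction on $r$ (using that $\Gamma_{r-1}$ of each summand remains a functional of the same underlying $W(h_\bullet)$) then shows $\Gamma_r(X) = \Gamma_r(aN) + \sum_i \Gamma_r(b_i(R_i^2-1)) + \sum_j \Gamma_r(c_j(P_j^2-1)+d_jP_j)$.

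Next I compute each piece. For $aN$ one easily checks $\Gamma_1(aN)=a^2$ (constant) and $\Gamma_r(aN)=0$ for $r\geq 2$, so $M_{r-1}(aN)=aN$ for $r=1$ and $0$ otherwise. For $\alpha(N^2-1)$ a short induction using $-DL^{-1}I_2(h\otimes h)=I_1(h)h$ yields $\Gamma_r(\alpha(N^2-1))=2^r\alpha^{r+1}N^2$, whence $M_{r-1}(b_i(R_i^2-1))=2^{r-1}b_i^r(R_i^2-1)$. For $Y_j:=c_j(P_j^2-1)+d_jP_j$, writing $\Gamma_r(Y_j)=A_r P_j^2+B_rP_j+C_r$ and using $-DL^{-1}[A_rP_j^2+B_rP_j+C_r]=(A_rP_j+B_r)h'_j$ gives the recursion $A_{r+1}=2cA_r$, $B_{r+1}=2cB_r+dA_r$, $C_{r+1}=dB_r$. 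Solving: $A_r=2^rc_j^{r+1}$, $B_r=2^{r-1}(r+2)c_j^rd_j$, so $M_{r-1}(Y_j)=2^{r-1}c_j^r(P_j^2-1)+2^{r-2}(r+1)c_j^{r-1}d_jP_j$ (the formula also holding at $r=1$).

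The final step is a Taylor-series bookkeeping. Plugging these expressions in:
\begin{align*}
\sum_{r=1}^{\deg P}\frac{P^{(r)}(0)}{r!\,2^{r-1}}M_{r-1}(X)
&= aN\,P'(0)+\sum_i(R_i^2-1)\sum_{r=1}^{\deg P}\frac{P^{(r)}(0)}{r!}b_i^r\\
&\quad+\sum_j(P_j^2-1)\sum_{r=1}^{\deg P}\frac{P^{(r)}(0)}{r!}c_j^r
+\sum_j\frac{d_jP_j}{2}\sum_{r=1}^{\deg P}\frac{P^{(r)}(0)(r+1)}{r!}c_j^{r-1}.
\end{align*}
Since $P(0)=0$, the first two inner sums equal $P(b_i)$ and $P(c_j)$, which vanish because $b_i$ and $c_j$ are roots. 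The last inner sum equals $P'(c_j)+P(c_j)/c_j=P'(c_j)$, which vanishes because $c_j$ is a double root. Finally $P'(0)=0$ whenever $a\neq 0$, since then $P$ carries a factor $x^2$; and when $a=0$, the leading $aN$ term is trivially absent. The main obstacle is organising the algebra of the $\Gamma_r(Y_j)$ recursion cleanly, but once $A_r,B_r,C_r$ are in closed form the Taylor identification is immediate.
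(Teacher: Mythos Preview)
Your proof is correct, and it takes a genuinely different route from the paper's. The paper never invokes additivity of $\Gamma_r$ across the independent summands; instead it stays at the level of the kernels $f_1,f_2$ and proves by induction a closed formula (its Eq.~\eqref{170416c}) for $\Gamma_r(X)-\E[\Gamma_r(X)]$ as an explicit combination of $I_2$- and $I_1$-terms whose kernels are iterated $\tilde\otimes_1$-contractions of $f_1$ and $f_2$. It then evaluates those iterated contractions using the orthonormal expansion of $f_1,f_2$, projects separately onto the first and second chaos via $J_1,J_2$, and identifies each projection with the same Taylor sums $P(b_i)$, $P(c_j)$, $P'(c_j)$ and $P'(0)$ that you obtain. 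Your approach trades the contraction calculus for a block-diagonal observation (Malliavin derivatives of the summands live in orthogonal one-dimensional subspaces of $H$, so $\Gamma_r$ decouples) followed by a scalar three-term recursion for each block; this is shorter and arguably more transparent for this particular target, while the paper's kernel computation has the advantage of producing the intermediate identity \eqref{170416c}, which displays the chaotic structure of $\Gamma_r(X)$ and is of independent interest.
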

\begin{proof} In this proof all the equalities between random variables hold with probability 1.  {We consider iterated contractions recursively defined as follows for $f_2\in H^{\odot 2}$:
\[
f_2 \tilde{\otimes}_{1}^{(1)}f_2 := f_2 ~;~ f_2 \tilde{\otimes}_{1}^{(p)}f_2 := \left( f_2 \tilde{\otimes}_{1}^{(p-1)}f_2\right) \tilde{\otimes}_{1} f_2,
\]
for $p\ge 2$.} 
We use the representation for $X$ given in Eq.~\eqref{210416a}  {and Eq.~\eqref{231018}}.
\begin{enumerate}[(1)]
\item We prove by induction over $r\ge 1$:
\begin{align}
\Gamma_r(X)-\E[\Gamma_r(X)]&= 2^r I_2(f_2 \tilde{\otimes}_1^{(r+1)} f_2) +2^{r-1}\,3I_1((\ldots ( f_2 \tilde{\otimes}_1 f_1 ) \tilde{\otimes}_1 f_2 ) \ldots )\tilde{\otimes}_1 f_{2})\nonumber \\
&\rule{5mm}{0mm}
+ 2^{r-1}   \sum_{j=3}^{r+1} \sum_{g_j=f_1 \atop g_i=f_2,\textrm{ for }i\ne  j}I_1((\ldots ( g_1 \tilde{\otimes}_1 g_2 ) \tilde{\otimes}_1 g_3 ) \ldots )\tilde{\otimes}_1 g_{r+1}). \label{170416c}
\end{align}
Notice that all iterated contractions on the right hand side of Eq.~\eqref{170416c} run over $r+1$ functions.\\
For $r=1$, we have with the stochastic Fubini theorem and the multiplication formula for multiple Wiener integrals:
\begin{align*}
\Gamma_1(X)&= \langle DX , -DL^{-1} X\rangle_H = \langle 2 I_1(f_2(t,\cdot)) + f_1(t), I_1(f_2(t,\cdot))+f_1(t) \rangle_H\\
&= 2\int_0^T I_2(f_2(t,\cdot)\tilde{\otimes} f_2(t,\cdot)) + f_2(t,\cdot )\tilde{\otimes}_1 f_2(t,\cdot) dt + 3 \int_0^T I_1(f_2(t,\cdot)\tilde{\otimes} f_1(t)) dt  \\
& \rule{5mm}{0mm}+ \int_{0}^T f_1(t)^2dt \\
&=2I_2(f_2 \tilde{\otimes}_1 f_2) + 3 I_1(f_2 \tilde{\otimes}_1 f_1) + 2\Vert  f_2 \Vert_{H\otimes H}^2 + \Vert f_1 \Vert_H^2.
\end{align*}
The claim follows since the expectation of every multiple Wiener integral is 0. Suppose now that the claim holds for some $r\ge 1$, then:
\begin{align*}
&\Gamma_{r+1}(X)= \langle DX,-DL^{-1}\Gamma_r(X)\rangle_H\\
&= \langle 2 I_1(f_2(t,\cdot)) + f_1(t) , 2^r I_1((f_2 \tilde{\otimes}_1^{(r+1)} f_2)(t,\cdot))\\
&\rule{5mm}{0mm} + 2^{r-1} 3 ((\ldots ( f_2 \tilde{\otimes}_1 f_1 ) \tilde{\otimes}_1 f_2 ) \ldots \tilde{\otimes}_1 f_{2})(t) )\\
&\rule{5mm}{0mm} + 2^{r-1}   \sum_{j=3}^{r+1} \sum_{g_j=f_1 \atop g_i=f_2,\textrm{ for }i\ne  j}((\ldots ( g_1 \tilde{\otimes}_1 g_2 ) \tilde{\otimes}_1 g_3 ) \ldots \tilde{\otimes}_1 g_{r+1} )(t)\rangle_H \\
&=2^{r+1} I_2(f_2\tilde{\otimes}_1^{(r+2)}f_2) + 2^{r+1} \langle f_2\tilde{\otimes}_1^{(r+1)} f_2, f_2\rangle_H   \\
&\rule{5mm}{0mm}+ 2^r 3 I_1((\ldots ( f_2 \tilde{\otimes}_1 f_1 ) \tilde{\otimes}_1 f_2 ) \ldots )\tilde{\otimes}_1 f_{2} )\tilde{\otimes}_1 f_2)\\
&\rule{5mm}{0mm} + 2^{r}   \sum_{j=3}^{r+1} \sum_{g_j=f_1 \atop g_i=f_2,\textrm{ for }i\ne  j}I_1((\ldots ( g_1 \tilde{\otimes}_1 g_2 ) \tilde{\otimes}_1 g_3 ) \ldots ) \tilde{\otimes}_1 g_{r+2}) \\
&\rule{05mm}{0mm}+2^r I_1( (f_2 \tilde{\otimes}_1^{(r+1)} f_2) \tilde{\otimes}_1 f_1 )+2^{r-1}
\\
&\rule{5mm}{0mm}\times \biggl(3 \langle (\ldots ( f_2 \tilde{\otimes}_1 f_1 ) \tilde{\otimes}_1 f_2 ) \ldots ) \tilde{\otimes}_1 f_{2}   , f_1 \rangle_H  \biggr.\\
&\rule{5mm}{0mm}+ \biggl.\sum_{j=3}^{r+1} \sum_{g_j=f_1 \atop g_i=f_2,\textrm{ for }i\ne  j}\langle(\ldots ( g_1 \tilde{\otimes}_1 g_2 ) \tilde{\otimes}_1 g_3 ) \ldots ) \tilde{\otimes}_1 g_{r+1} , f_1\rangle_H\biggr).
\end{align*}
Hence Eq.~\eqref{170416c} follows for $r+1$.

\item For $r\ge 2$, we have the following equalities:
\begin{align}
f_2 \tilde{\otimes}_1^{(r)} f_2 = \sum_{i=1}^{k_1} b_i^{r} h_i\tilde{\otimes}h_i + \sum_{r}^{k_2} c_i^{r} h'_i\tilde{\otimes} h'_i, \label{170416a}\\
(\ldots ( g_1 \tilde{\otimes}_1 g_2) \tilde{\otimes}_1 g_3)\ldots ) \tilde{\otimes}_1 g_r = \sum_{i=1}^{k_2}  c_i^{r-1}d_i h'_i, \label{170416b}
\end{align}
where the last equality holds if exactly one of the functions $g_1, \ldots, g_r$ equals $f_1$, all remaining functions $g_i$ being equal to $f_2$.
We notice that $(h_i \tilde{\otimes} h_i) \tilde{\otimes}_1 (h'_j \tilde{\otimes } h'_j)= 0$ and  $(h'_i \tilde{\otimes} h'_i) \tilde{\otimes}_1 (h'_j \tilde{\otimes } h'_j)= 1_{[i=j]} h'_i \tilde{\otimes} h'_i$ and $(h_i \tilde{\otimes} h_i) \tilde{\otimes}_1 (h_j \tilde{\otimes } h_j)= 1_{[i=j]} h_i \tilde{\otimes} h_i$. Hence $f_2 \tilde{\otimes}_1 f_2 =  \sum_{i=1}^{k_1} b_i^{2} h_i\tilde{\otimes}h_i + \sum_{i=1}^{k_2} c_i^2 h'_i\tilde{\otimes} h'_i$ and,   generally:
\begin{align*}
f_2 \tilde{\otimes}_1^{(r)} f_2 &= \sum_{i=1}^{k_1} b_i^{r} h_i\tilde{\otimes}h_i + \sum_{i=1}^{k_2} c_i^{r} h'_i\tilde{\otimes} h'_i.
\end{align*}

To prove Eq.~\eqref{170416b}, we suppose first that $g_2=f_1$ and $g_1= g_3 = \ldots = g_r=f_2$. Then:
\begin{align*}
g_1 \tilde{\otimes }_1 g_2 = f_2 \tilde{\otimes }_1 f_1 &= \left(
\sum_{i=1}^{k_1} b_i h_i \tilde{\otimes} h_i + \sum_{i=1}^{k_2} c_i h'_i \tilde{\otimes} h'_i
 \right) \tilde{\otimes}_1  \left( 
ah_0 +\sum_{i=1}^{k_2}d_i h'_i 
 \right)=\sum_{i=1}^{k_2} c_i d_i h'_i.
\end{align*}  
Since $g_1=g_3= \ldots=g_r=f_2$, it is now easy to see that:
\begin{align*}
(\ldots ( g_1 \tilde{\otimes}_1 g_2) \tilde{\otimes}_1 g_3)\ldots ) \tilde{\otimes}_1 g_r = \sum_{i=1}^{k_2}  c_i^{r-1}d_i h'_i.
\end{align*}
If, on the other hand $g_1=\ldots=g_l=f_2$ for $l>1$, we can use Eq.~\eqref{170416a} to see that 
\begin{align*}
(\ldots ( g_1 \tilde{\otimes}_1 g_2) \tilde{\otimes}_1 g_3)\ldots ) \tilde{\otimes}_1 g_l  
=f_2 \tilde{\otimes}_1^{(l)} f_2 =
\sum_{i=1}^{k_1} b_i^{l} h_i\tilde{\otimes}h_i + \sum_{i=1}^{k_2} c_i^{l} h'_i\tilde{\otimes} h'_i,
\end{align*}
and we can proceed as above to see that Eq.~\eqref{170416b} holds.

\item Considering Eq.~\eqref{170416c}, it is easy to see that 
\[
\sum_{r=1}^{\textrm{deg}(P)} \frac{P^{(r)}(0)}{r!2^{r-1}} \left(  
\Gamma_{r-1}(X) - \E[\Gamma_{r-1}(X)]
\right)
\]
lives in the sum of the first two Wiener chaoses. We consider the projection of the random variable above on the first respectively on the second Wiener chaos.
We have with Eq.~\eqref{170416c}:
\begin{align*}
&J_1\left(\sum_{r=1}^{\textrm{deg}(P)} \frac{P^{(r)}(0)}{r!2^{r-1}} \left(  
\Gamma_{r-1}(X) - \E[\Gamma_{r-1}(X)]
\right)
\right) = \frac{P'(0)}{1!2^0} J_1(X) \\
&\rule{5mm}{0mm}+\sum_{r=2}^{\textrm{deg}(P)} \frac{P^{(r)}(0)}{r! 2^{r-1}} 2^{r-2}\, \left( 
3I_1((\ldots ( f_2 \tilde{\otimes}_1 f_1 ) \tilde{\otimes}_1 f_2 ) \ldots ) \tilde{\otimes}_1 f_{2})\right. \\
&\rule{5mm}{0mm} + \left.    \sum_{j=3}^{r} \sum_{g_j=f_1 \atop g_i=f_2,\textrm{ for }i\ne  j}I_1((\ldots ( g_1 \tilde{\otimes}_1 g_2 ) \tilde{\otimes}_1 g_3 ) \ldots ) \tilde{\otimes}_1 g_{r})
\right) \\
&= \frac{P'(0)}{1!2^0}\, I_1(f_1) + \sum_{i=1}^{k_2} \sum_{r=2}^{\textrm{deg}(P)}\frac{P^{(r)}(0)}{r!2^{r-1}} I_1\left(  2^{r-2} \left( 3h'_i + (r-2)h'_i \right)d_ic_i^{r-1}\right)\\
&= \frac{P'(0)}{1!2^0}\, I_1(f_1) + \frac{1}{2}\sum_{i=1}^{k_2} \sum_{r=2}^{\textrm{deg}(P)}\frac{P^{(r)}(0)}{r!} I_1\left(  h'_i\right)(r+1)d_ic_i^{r-1}.
\end{align*}
If $a=0$, we have:
\begin{align}
\frac{P'(0)}{1!2^0} \, I_1(f_1) = \frac{1}{2} \sum_{i=1}^{k_2}\frac{P'(0)}{1!2^0} I_1(h'_i)\, 2d_i. \label{170416d}
\end{align}
If $a\ne 0$, we have $P'(0)=0$, hence Eq.~\eqref{170416d} holds in both cases and:
\begin{align*}
&J_1\left(\sum_{r=1}^{\textrm{deg}(P)} \frac{P^{(r)}(0)}{r!2^{r-1}} \left(  
\Gamma_{r-1}(X) - \E[\Gamma_{r-1}(X)]
\right)
\right)\\
&=\frac{1}{2}\sum_{i=1}^{k_2} \sum_{r=1}^{\textrm{deg}(P)}\frac{P^{(r)}(0)}{r!} I_1\left(  h'_i\right)(r+1)d_ic_i^{r-1}\\
&= \frac{1}{2}\sum_{i=1}^{k_2}\left[ 
d_i \sum_{r=1}^{\textrm{deg}(P)} \frac{P^{(r)}(0)}{r!} r c_i^{r-1}
\right] I_1(h'_i) 
+ \frac{1}{2} \sum_{i=1}^{k_2} \left[ \frac{d_i}{c_i} \sum_{r=1}^{\textrm{deg}(P)} \frac{P^{(r)}(0)}{r!} c_i^r   \right] I_1(h'_i)\\
&= \frac{1}{2} \sum_{i=1}^{k_2} d_i P'(c_i) I_1(h'_i) + \frac{1}{2} \sum_{i=1}^{k_2} \frac{d_i}{c_i}P(c_i) I_1(h'_i)=0.
\end{align*}
We have for the projection on the second Wiener chaos with Eq.~\eqref{170416a}:
\begin{align*}
&J_2\left(\sum_{r=1}^{\textrm{deg}(P)} \frac{P^{(r)}(0)}{r!2^{r-1}} \left(  
\Gamma_{r-1}(X) - \E[\Gamma_{r-1}(X)]
\right)
\right)\\
&=\sum_{r=1}^{\textrm{deg}(P)} \frac{P^{(r)}(0)}{r!2^{r-1}} 2^{r-1}I_2\left(  f_2 \tilde{\otimes}_1^{(r)}f_2\right)\\
&= \sum_{i=1}^{k_1} \sum_{r=1}^{\textrm{deg}(P)} \frac{P^{(r)}(0)}{r!} b_i^{r}I_2(h_i \tilde{\otimes} h_i)+ \sum_{i=1}^{k_2} \sum_{r=1}^{\textrm{deg}(P)} \frac{P^{(r)}(0)}{r!} c_i^{r}I_2(h'_i \tilde{\otimes} h'_i) \\
&= \sum_{i=1}^{k_1} P(b_i)I_2(h_i \tilde{\otimes}h_i) + \sum_{i=1}^{k_2} P(c_i) I_2(h'_i \tilde{\otimes} h'_i)=0.
\end{align*}

\item 
Since $Z:=\sum_{r=1}^{\textrm{deg}(P)} \frac{P^{(r)}(0)}{r!2^{r-1}} \left(  
\Gamma_{r-1}(X) - \E[\Gamma_{r-1}(X)]
\right)$ equals $J_1(Z)+ J_2(Z)$,
the claim of the Lemma follows now directly.
\end{enumerate}
\end{proof}

\begin{theorem}\label{030416aab}
Consider $k_1,  k_2\ge 0$ and 
\[X=I_1(f_1)+I_2(f_2)=a N + \sum_{i=1}^{k_1} b_i(R_i^2-1)+\sum_{i=1}^{k_2} [c_i(P_i^2-1)+d_iP_i],\]
as defined in Eq.\eqref{210416a} and Eq.~\eqref{070416aaa} 
 where  $N, R_1, \ldots, R_{k_1}, P_1,\ldots, P_{k_2} \stackrel{\textrm{i.i.d.}}{\sim}\mathcal{N}(0,1)$. Suppose that at least one of the parameters $a, k_1, k_2$ is non-zero.
Consider a sequence $\left\lbrace F_n\right\rbrace_n$ of non-zero random variables
with $F_n=\sum_{i=1}^p I_i(f_{n, i})$ for $p\ge 2$ fixed and $\left\lbrace f_{n, i}\right\rbrace_n\subset H^{\odot i}$ for $1\le i \le p$. Define: \[P(x)=x^{1+1_{[a\ne 0]}} \prod_{j=1}^{k_1} (x-b_j)\prod_{j=1}^{k_2}(x-c_j)^2.\]
If $F_n \stackrel{\textrm{Law}}{\to} X$, as $n\to \infty$, then the following limits hold, as $n\to \infty$:
\begin{enumerate}[(1)]
\item $\displaystyle{ \kappa_r(F_n)\to \kappa_r(X),\quad\textrm{ for } r=1,\ldots,\textrm{deg}(P)}$,
\item $\displaystyle{ 
\E\left[ \left| \E\left[  \sum_{r=1}^{\textrm{deg}(P)} \frac{P^{(r)}(0)}{r!2^{r-1}} \left( \Gamma_{r-1}(F_n) - \E[\Gamma_{r-1}(F_n)] \right) \Big| F_n  \right] \right|  \right] \to 0 }$.
\end{enumerate}
\end{theorem}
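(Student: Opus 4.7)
The strategy is to prove the two conclusions separately, relying on hypercontractivity for (1) and on a combination of integration-by-parts with a kernel extraction argument for (2). Throughout, write $\alpha_r := P^{(r)}(0)/(r!\,2^{r-1})$ and $A_n := \sum_{r=1}^{\textrm{deg}(P)} \alpha_r M_{r-1}(F_n)$, so the quantity in (2) is $\E[|\E[A_n \mid F_n]|]$; I will aim for the stronger statement $A_n \to 0$ in $L^2$, from which conclusion (2) follows by conditional Jensen. Conclusion (1) is a quick consequence of hypercontractivity: since $F_n \in \bigoplus_{i=1}^p I_i(H^{\odot i})$ with $p$ fixed, all $L^q$ moments of $F_n$ are uniformly bounded, so $F_n \stackrel{\textrm{Law}}{\to} X$ combined with uniform integrability gives $\E[F_n^q] \to \E[X^q]$ for every $q$, and hence $\kappa_r(F_n) \to \kappa_r(X)$ for every $r$.

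For the first step towards conclusion (2), I establish asymptotic polynomial orthogonality of $A_n$ to $F_n$. Iterating the integration-by-parts identity
\[
\E[F^k \Gamma_r(F)] = \tfrac{1}{k+1}\bigl(\E[F^{k+1}\Gamma_{r-1}(F)] - \E[F^{k+1}]\,\E[\Gamma_{r-1}(F)]\bigr)
\]
down to $\Gamma_0(F) = F$ exhibits $\E[F_n^k \Gamma_r(F_n)]$ as a universal polynomial in the moments of $F_n$; by conclusion (1), this polynomial expression converges to the same one evaluated at $X$, yielding $\E[F_n^k \Gamma_r(F_n)] \to \E[X^k \Gamma_r(X)]$ for every $k, r$. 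Lemma \ref{200416a} provides $\sum_r \alpha_r M_{r-1}(X) = 0$ almost surely, so combining the two gives $\E[F_n^k A_n] \to 0$ for every $k \geq 0$; equivalently, $A_n$ becomes asymptotically orthogonal to every polynomial in $F_n$.

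The second step upgrades this orthogonality to $A_n \to 0$ in $L^2$ via kernel extraction. Along any subsequence, the uniform bounds $\sup_n \|f_{n,i}\|_{H^{\otimes i}} < \infty$ (implied by (1)) permit extraction of a further subsequence $(n_k)$ along which $f_{n_k,i}$ converges weakly to some $f^*_i \in H^{\otimes i}$ and $\|f_{n_k,i}\|^2 \to a_i$ for each $i$. The variance identity $\sum_i i!\,\|f_{n_k,i}\|^2 \to \E[X^2] = \|f_1\|^2 + 2\|f_2\|^2$, together with the fact that $X$ lives in the first two chaoses, then forces $a_i = 0$ for $i \geq 3$ and $\|f^*_i\|^2 = a_i$ for $i = 1,2$, upgrading weak to strong convergence of the first two kernels. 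Consequently $F_{n_k} \to F^* := I_1(f^*_1) + I_2(f^*_2)$ in $L^2$ with $F^* \stackrel{\textrm{Law}}{=} X$, and continuity of the Malliavin operators appearing in $A_n$ under strong $L^2$-convergence of kernels yields $A_{n_k} \to A_{F^*}$ in $L^2$. Since $F^*$ has the form of the target in \eqref{070416aaa}, Lemma \ref{200416a} applies to $F^*$ to give $A_{F^*} = 0$ a.s. A standard sub-subsequence argument then produces $A_n \to 0$ in $L^2$ along the original sequence.

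The principal obstacle I anticipate is the chaos-rigidity step: verifying that $a_i = 0$ for $i \geq 3$ and that the weak kernel convergence in the first two chaoses is in fact strong. The variance identity alone does not close this on its own, since weak convergence only delivers the inequality $\|f^*_i\|^2 \le a_i$; one needs a finer contractions argument in the spirit of Nualart--Peccati / Peccati--Tudor, adapted to mixed-chaos target variables, or an equivalent structural result stating that no random variable with law $P_X$ can be realized outside $I_1(H) \oplus I_2(H^{\odot 2})$ within a fixed sum of Wiener chaoses. Once this rigidity is secured, the remainder of the argument (the moment computation in the first step, continuity of Malliavin operators, and the final application of Lemma \ref{200416a} to $F^*$) is routine.
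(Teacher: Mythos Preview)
Your treatment of conclusion (1) and the first step towards (2)---establishing $\E[F_n^k A_n]\to 0$ for all $k$ via iterated integration by parts and Lemma~\ref{200416a}---is correct and matches the route the paper inherits from \cite{azmoodeh}. The gap is entirely in your second step.

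The kernel-extraction argument cannot work as written: convergence in law of $F_n$ to $X$ does \emph{not} force $L^2$-convergence of $F_n$, even along subsequences, and the ``chaos rigidity'' you flag as the principal obstacle is not merely difficult but false. Take $F_n=I_2(e_n\otimes e_n)$ with $(e_n)_n$ orthonormal in $H$: every $F_n$ has exactly the law of $N^2-1$, so $F_n\stackrel{\textrm{Law}}{\to}N^2-1$ trivially, yet the kernels $e_n\otimes e_n$ converge weakly to $0$ while $\|e_n\otimes e_n\|_{H\otimes H}=1$ for all $n$. Your extracted $F^*$ is then $0$, not a copy of $X$, and the continuity-of-Malliavin-operators step collapses. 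More fundamentally, you are aiming for $A_n\to 0$ in $L^2$, which is strictly stronger than the conditional-$L^1$ statement in (2); the paper does not claim this, and it is not what \cite{azmoodeh} prove in the $(\mathrm{i})\Rightarrow(\mathrm{iii})$ direction of their Theorem~3.2.

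The argument the paper actually invokes (verbatim from \cite{azmoodeh}, replacing only their Lemma~3.1 by Lemma~\ref{200416a}) stays with the conditional expectation throughout. One works with the pair $(F_n,A_n)$, which lives in a fixed finite sum of Wiener chaoses with uniformly bounded moments, extracts a joint subsequential limit $(Y,Z)$ in law with $Y\stackrel{d}{=}X$, and uses your first step plus moment-determinacy of the law of $X$ to obtain $\E[Z\mid Y]=0$. The passage from this to $\E[|\E[A_{n_k}\mid F_{n_k}]|]\to 0$ relies on a continuity property of conditional expectations for sequences living in a fixed finite sum of Wiener chaoses---a structural fact that holds without any strong convergence of $F_n$ and which is precisely the machinery supplied in \cite{azmoodeh}.
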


\begin{proof}
The proof of this theorem is identical to \citep[Theorem 3.2, Proof of (i)$\to$ (iii) ]{azmoodeh}. It is enough to replace \citep[Lemma 3.1]{azmoodeh} by Lemma \ref{200416a}.
\end{proof}

\paragraph*{Proof of Theorem \ref{230416a}}
The main result of this paper  is now  a direct consequence of Theorem \ref{030416aa} and Theorem \ref{030416aab}.\qed \\

\begin{remark} \label{150316a}
\begin{enumerate}[(1)]
\item We notice that, for any sequences of  integrable random variables $\left\lbrace F_n\right\rbrace_n$ and $\left\lbrace G_n\right\rbrace_n$, we have that $\E[|F_n|]\to 0$ implies $\E[|\E[F_n|G_n]|]\to 0$, as $n\to \infty$:
\begin{align*}
\E[|\E[F_n|G_n]|] &\le \E [  \E[|F_n|~|G_n] ]   = \E[|F_n|] \to 0,\quad \textrm{as } n \to \infty.
\end{align*}
Hence, with the notations of Theorem \ref{230416a}, a set of sufficient conditions for $F_n\stackrel{\textrm{Law}}{\to} X$, as $n\to \infty$, is:
\begin{enumerate}[(1)]
\item[(1')]  $\displaystyle{\kappa_r(F_n)\to \kappa_r(X),\quad  \textrm{for }  r=1,\ldots, \textrm{deg}(P)}$,
\item[(2')] $\displaystyle{\E\left[ \left|    \sum_{r=1}^{\textrm{deg}(P)} \frac{P^{(r)}(0)}{r!2^{r-1}} \left( 
\Gamma_{r-1}(F_n) -\E[\Gamma_{r-1}(F_n)] 
\right)   \right|^\alpha \right]\to 0}$ ,
\end{enumerate}
where $\alpha\ge 1$. If $\alpha=2$,
we can use Eq.~\eqref{260416x} to calculate the expectation in (2').

\item For $\alpha=2$, conditions (1') and (2') can be expressed in terms of conditions for contractions. However the resulting conditions are usually complicated as indicates   the following example:\\

\indent \textit{Consider $p\ge 2$ even, $a\ne 0$, $b\ne 0$ and a sequence of functions $\left\lbrace f_{n, p}\right\rbrace\subset H^{\odot p}$ such that, as $n\to \infty$:
\begin{align}
p! \Vert f_{n, p}\Vert_{H^{\otimes p}}^2&\to a^2+2b^2,\label{241215gg}\\
\langle f_{n,  p} \tilde{\otimes}_{p/2} f_{n, p} , f_{n, p} \rangle_{H^{\otimes p}} &\to 8b^3 \left[ p! (p/2)! \begin{pmatrix}
p \\ p/2
\end{pmatrix}^2\right]^{-1} \label{241215g}.
 \end{align}
Define  sets $A_l$ and $B_l$ for every even integer $l$   with $2\le l\le 3p-4$:
\begin{align*}
A_l &:= \left\lbrace (s,t)\in \mathbb{N}^2 : 1\le s\le p-1 ~,~ 1\le t \le p\wedge(2p-2s) ~,~ 3p-2(s+t)=l \right\rbrace, \\
B_l &:= \left\lbrace s\in \mathbb{N} : 1\le s\le p-1 ~,~ 2p-2s=l \right\rbrace,
\end{align*}
and:
\begin{align*}
c(p, s, t)&:=p (s-1)! (t-1)! \begin{pmatrix}
p-1 \\ s-1
\end{pmatrix}^2 \begin{pmatrix}
p-1 \\ t-1
 \end{pmatrix} \begin{pmatrix}
 2p-2s-1 \\ t-1
 \end{pmatrix} , \\
 k(p,  s)&:=(s-1)! \begin{pmatrix}
 p-1 \\ s-1
 \end{pmatrix}^2.
\end{align*} 
Define for every even integer $l$  a condition $C_l$ by:
{\fontsize{9.0}{0}
\begin{align*}
\left\Vert 
\sum_{(s, t)\in A_l} 
c(p, s, t)
(f_{n, p} \tilde{\otimes}_s f_{n, p} ) \tilde{\otimes }_t f_{n, p} -2b \sum_{s\in B_l} 
k(p, s)
f_{n, p} \tilde{\otimes}_s f_{n, p}  
\right\Vert_{H^{\otimes l}}\to 0.
\end{align*}}
If conditions $C_l$ hold for every even integer $l$  with $2\le l \le 3p-4$, we have, as $n\to \infty$:
\[
I_p(f_{n, p})\stackrel{\textrm{Law}}{\to} aN+b(\xi^2-1),\quad N, \xi\stackrel{\textrm{i.i.d.}}{\sim} \mathcal{N}(0,1).
\]
}

The proof of this result is omitted, 
another example for such conditions can be found in \cite[Theorem 4.1]{azmoodeh}.   
\end{enumerate}
\end{remark}

\section{Applications}\label{applications}
 {
We notice that our results about convergence in law of sequences living in a finite sum of Wiener chaoses can be extended to match convergence in total variation, see for instance \cite[Appendix C]{Peccati}. Indeed a direct application of our results together with \cite[Lemma 3.3]{azmoodeh} or \cite[Theorem 3.1]{MR3003367} proves that we can replace the convergence in law of sequences living in a finite sum of Wiener chaoses by convergence in total variation.}

\subsection{Recovering  classical criteria}

As a direct consequence of Theorem \ref{230416a}, we get the following Corollary \ref{010516b} which  extends \cite[Theorem 3.2]{azmoodeh}. We have proved that (2) is necessary and sufficient for convergence in law whereas the authors in the cited reference need $L^2$-convergence of the conditional expectation to prove $F_n \stackrel{\textrm{Law}}{\to}X$, as $n\to \infty$. As pointed out in Remark \ref{230416b} this results from the Cauchy-Schwarz inequality. Moreover, Corollary \ref{010516b} extends the first part of \cite[Proposition 1.7]{2016arXiv161202279D} to more general linear combinations of independent central $\chi^2$ distributed random variables. As anticipated, if $b_1=\ldots=b_k=1$, the polynomial $P$ can be simplified.

\begin{corollary} \label{010516b}
Consider $X= I_2(f_2)=\sum_{i=1}^{k} b_i(R_i^2-1)$ for $R_1, \ldots, R_{k_1}\stackrel{\textrm{i.i.d.}}{\sim}\mathcal{N}(0, 1)$ and $b_i\ne 0 $ for $i=1, \ldots, k$, see Eq.~\eqref{210416a} and \eqref{070416aaa}. Define $P(x)=x\prod_{j=1}^{k}(x-b_i)$.
Let $\left\lbrace F_n\right\rbrace_n$ be a sequence  of  non-zero random variables such that each $F_n$ lives in a finite sum of Wiener  chaoses, i.e. $F_n= \sum_{i=1}^m I_i(f_{n, i})$ for $n\ge 1$ and $m\ge 2$ fixed, $\left\lbrace f_{n, i}\right\rbrace_n\subset H^{\odot i}$ for $1\le i \le m$. The following two asymptotic relations (1) and (2) are equivalent, as $n\to \infty$:
\begin{enumerate}[(1)]
\item  $F_n\stackrel{\textrm{Law}}{\to} X$;
\item \begin{enumerate}[(a)]
\item $\kappa_r(F_n)\to \kappa_r(X)$, for $r=2, \ldots, k+1$,
\item  $\displaystyle{\E\left[  \left| \E\left[ \sum_{r=1}^{k+1} \frac{P^{(r)}(0)}{r!2^{r-1}} \left( \Gamma_{r-1}(F_n) - \E[\Gamma_{r-1}(F_n)] \right)   \Big|F_n  \right]   \right|\right]  \to 0}$.
\end{enumerate}
\end{enumerate}
\end{corollary}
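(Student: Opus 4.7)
The plan is to derive Corollary \ref{010516b} as an immediate specialization of Theorem \ref{230416a}. Concretely, I would instantiate the main theorem with parameter choices $a=0$, $k_1=k$, and $k_2=0$. Under these choices, the general target
\[
a N + \sum_{i=1}^{k_1} b_i(R_i^2-1)+\sum_{i=1}^{k_2} [c_i(P_i^2-1)+d_iP_i]
\]
collapses, via the conventions that an empty sum is $0$ and an empty product is $1$, to $X=\sum_{i=1}^{k}b_i(R_i^2-1)$, exactly the target of the corollary. The hypothesis that at least one of $a, k_1, k_2$ is non-zero is met because the corollary implicitly requires $k\ge 1$ (otherwise $X=0$ and the statement is vacuous in view of the non-vanishing assumption on $F_n$). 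Similarly the general polynomial
\[
P(x)=x^{1+1_{[a\ne 0]}} \prod_{j=1}^{k_1} (x-b_j)\prod_{j=1}^{k_2}(x-c_j)^2
\]
reduces to $P(x)=x\prod_{j=1}^{k}(x-b_j)$, matching the polynomial defined in the corollary, with $\mathrm{deg}(P)=k+1$.

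With these identifications, condition (b) of the corollary is literally condition (a)(2) of Theorem \ref{230416a}. The single point needing reconciliation is the range of cumulant indices: Theorem \ref{230416a} asks for $\kappa_r(F_n)\to\kappa_r(X)$ for all $r=1,\dots,k+1$, while condition (a) of the corollary only imposes this for $r=2,\dots,k+1$. The first case is automatic: every $F_n=\sum_{i=1}^{m}I_i(f_{n,i})$ lives in a finite sum of Wiener chaoses of order $\ge 1$, so $\kappa_1(F_n)=\E[F_n]=0$, and likewise $\kappa_1(X)=0$. Hence the missing $r=1$ cumulant equality holds trivially, and the two conditions coincide.

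Once this identification is in place, the two implications $(1)\Rightarrow(2)$ and $(2)\Rightarrow(1)$ of the corollary follow at once from the corresponding implications in Theorem \ref{230416a}. I do not anticipate any real obstacle; the only care needed is the indexing check on cumulants above and the verification that the polynomial reduction preserves the derivatives $P^{(r)}(0)$ appearing in condition (b), which is immediate since the reduced polynomial is identical to the reduction of the general one evaluated at $(a,k_1,k_2)=(0,k,0)$.
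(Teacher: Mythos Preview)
Your proposal is correct and matches the paper's own proof, which simply invokes Theorem \ref{230416a} with $a=k_2=0$. Your additional remark that the $r=1$ cumulant condition is automatic (since $\kappa_1(F_n)=\E[F_n]=0=\kappa_1(X)$) is a helpful clarification that the paper leaves implicit.
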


\begin{proof}
Use Theorem \ref{230416a} with $a=k_2=0$.
\end{proof}

We come now to the  seminal paper \cite{nualart2005} of Nualart and Peccati in which the authors have characterised the convergence in law to a standard normal random variable. Since this first paper, the conditions given in \cite{nualart2005} have been extended, see for instance \cite{nourdin2009b}. 
Taking $k_1=k_2=0$ and $a=1$, we have with Theorem \ref{230416a} the following characterisation which corresponds to condition $(iii')$ cited in Section \ref{history}:
\begin{corollary}
Consider $X= aI_1(h_0) $ for $I_1(h_0) \sim \mathcal{N}(0, 1)$ and $a\ne 0$. Let $\left\lbrace F_n\right\rbrace_n$ be a sequence  of non-zero random variables such that each $F_n$ lives in a finite sum of Wiener chaoses, i.e. $F_n= \sum_{i=1}^m I_i(f_{n, i})$ for $n\ge 1$ and $m\ge 2$ fixed, $\left\lbrace f_{n, i}\right\rbrace_n\subset H^{\odot i}$ for $1\le i\le m$. The following two asymptotic relations (1) and (2) are equivalent, as $n\to \infty$:
\begin{enumerate}[(1)]
\item  $F_n\stackrel{\textrm{Law}}{\to} X$,
\item \begin{enumerate}[(a)]
\item $\kappa_2(F_n)\to a^2$,
\item  $\displaystyle{\E\left[  \left| \E\left[ 
\langle DF_n ,-DL^{-1}F_n \rangle_H - a^2
  \Big|F_n  \right]   \right|\right]  \to 0}$.
\end{enumerate}
\end{enumerate}
\end{corollary}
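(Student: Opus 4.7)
The plan is to derive this corollary as a direct instance of Theorem \ref{230416a} with $k_1=k_2=0$ and $a\neq 0$. In that setting the polynomial defined in the theorem becomes $P(x)=x^{1+1_{[a\neq 0]}}=x^2$ (empty products equal $1$), so $\mathrm{deg}(P)=2$, $P^{(1)}(0)=0$, and $P^{(2)}(0)=2$. The target variable is $X=aI_1(h_0)$ with $\kappa_1(X)=0$ and $\kappa_2(X)=a^2$, and $\kappa_r(X)=0$ for $r\geq 3$, which is consistent with $\mathrm{deg}(P)=2$.

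First I would rewrite condition (a)(1) of Theorem \ref{230416a}: it requires $\kappa_r(F_n)\to\kappa_r(X)$ for $r=1,2$. Since each $F_n$ lives in the direct sum of Wiener chaoses of order $\geq 1$, we have $\kappa_1(F_n)=\mathbb{E}[F_n]=0=\kappa_1(X)$ automatically, so this condition collapses to $\kappa_2(F_n)\to a^2$, which is exactly (2)(a). Next I would rewrite condition (a)(2): the sum inside the conditional expectation is
\begin{align*}
\sum_{r=1}^{2}\frac{P^{(r)}(0)}{r!\,2^{r-1}}M_{r-1}(F_n)=\frac{P'(0)}{1}M_0(F_n)+\frac{P''(0)}{2\cdot 2}M_1(F_n)=\tfrac12\bigl(\Gamma_1(F_n)-\mathbb{E}[\Gamma_1(F_n)]\bigr),
\end{align*}
so condition (a)(2) reads $\mathbb{E}\bigl[\bigl|\mathbb{E}[\Gamma_1(F_n)-\mathbb{E}[\Gamma_1(F_n)]\mid F_n]\bigr|\bigr]\to 0$, and by the definition of $\Gamma_1$ this is the same object as in (2)(b) up to centering and the harmless factor $1/2$.

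The only remaining point is to replace the centering constant $\mathbb{E}[\Gamma_1(F_n)]$ by $a^2$. For this I would invoke the standard identity $\mathbb{E}[\Gamma_{r-1}(F)]=\kappa_r(F)/(r-1)!$ (already used within the excerpt, e.g.\ in Step 2 of the proof of Theorem \ref{030416aa}), which at $r=2$ gives $\mathbb{E}[\Gamma_1(F_n)]=\kappa_2(F_n)$. Under (2)(a) we therefore have $\mathbb{E}[\Gamma_1(F_n)]\to a^2$, and the triangle inequality
\begin{align*}
\Bigl|\mathbb{E}\bigl[\Gamma_1(F_n)-a^2\,\bigm|\,F_n\bigr]\Bigr|\leq \Bigl|\mathbb{E}\bigl[\Gamma_1(F_n)-\mathbb{E}[\Gamma_1(F_n)]\,\bigm|\,F_n\bigr]\Bigr|+\bigl|\mathbb{E}[\Gamma_1(F_n)]-a^2\bigr|
\end{align*}
shows that the centering constant can be swapped in either direction at the cost of a vanishing deterministic term. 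Hence (2)(a) and (2)(b) together are equivalent to (a)(1) and (a)(2) of Theorem \ref{230416a}, and Theorem \ref{230416a} gives the equivalence with (1).

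No step is a real obstacle here; the only mild care is the bookkeeping between the centered form $\Gamma_1(F_n)-\mathbb{E}[\Gamma_1(F_n)]$ appearing in the master theorem and the form $\Gamma_1(F_n)-a^2$ preferred in the corollary, which is handled by the identity $\mathbb{E}[\Gamma_1(F_n)]=\kappa_2(F_n)$ combined with (2)(a).
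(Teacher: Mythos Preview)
Your proof is correct and follows essentially the same route as the paper: specialize Theorem \ref{230416a} to $k_1=k_2=0$, $a\neq 0$, compute the sum to obtain $\tfrac12(\Gamma_1(F_n)-\kappa_2(F_n))$, and use $\kappa_2(F_n)\to a^2$ together with the triangle inequality to swap the centering constant $\kappa_2(F_n)$ for $a^2$. The paper phrases the last step slightly differently (invoking \cite[Lemma 3.3]{azmoodeh} to get $\kappa_2(F_n)\to a^2$ in the direction (1)$\Rightarrow$(2)), but this is already contained in condition (a)(1) of Theorem \ref{230416a}, so your argument is equivalent.
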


\begin{proof}
The equivalence of (1) and (2) follows directly from Theorem \ref{230416a} with $P(x)=x^2$, since:
\[
\sum_{r=1}^2 \frac{P^{(r)}(0)}{r!2^{r-1}} \left( \Gamma_{r-1}(F_n)- \E[\Gamma_{r-1}(F_n)] \right) = \frac{1}{2}  \, \left( \Gamma_1(F_n) - \kappa_2(F_n) \right).
\]
If (2) holds, we have clearly $\lim_n \kappa_2(F_n)=a^2$. If (1) holds, the latter limit follows from \cite[Lemma 3.3]{azmoodeh}.
\end{proof}

\begin{remark}
The equivalence of (1) and (2) in the last corollary can be extended to  sequences $\left\lbrace F_n \right\rbrace_n$ living in $\mathbb{D}^{1,2}$ if some assumptions on the boundedness of the $\Gamma_1$-operator is added. Indeed we can prove that (1) implies (2) if $\sup_n \E[\Gamma_1(F_n)^2]<\infty $. 
It is unsure whether the equivalence of (1) and (2) holds without any additional assumptions if $\left\lbrace F_n \right\rbrace_n\subset\mathbb{D}^{1,2}$.
\end{remark}
\color{black}

We use now Remark \ref{150316a} to recover more criteria for the convergence in law, see Corollary \ref{170816ab}.
Notice that (1) is \cite[Theorem 5.3.1]{Peccati}  for the case of a sequence $\left\lbrace F_n\right\rbrace_n$ living in a fixed sum of Wiener chaoses, whereas (2) is the sufficient part of the criterion given in \cite[Eq. (1.3)]{nourdin2009b} with $L^2$-convergence replaced by $L^1$-convergence. Since all $L^p$-norms inside a fixed sum of Wiener chaoses are equivalent, the criterion given in (2) below is thus necessary and sufficient.

\begin{corollary} \label{170816ab}
Consider $X= aI_1(h_0) $ for $I_1(h_0) \sim \mathcal{N}(0, 1)$ and $a\ne 0$. Let $\left\lbrace F_n\right\rbrace_n$ be a sequence of non-zero random variables with $\kappa_2(F_n)\to a^2$, as $n\to \infty$.
\begin{enumerate}[(1)]
\item If $F_n= \sum_{i=1}^m I_i(f_{n, i})$ for $n\ge 1$, $m\ge 2$ fixed with $\left\lbrace f_{n, i}\right\rbrace_n\subset H^{\odot i}$ for $1\le i\le m$ and, as $n\to \infty$:
\[
\E\left[ \left| \langle DF_n ,-DL^{-1}F_n\rangle_H -a^2 \right|\right] \to 0,
\] 
then $F_n \stackrel{\textrm{Law}}{\to} X$, as $n\to \infty$.
\item If $F_n= I_p(f_{n, p})$ with $\left\lbrace f_{n, p}\right\rbrace_n\subset H^{\odot p}$ and, as $n\to \infty$:
\[
\E \left[ \left|1/p \Vert D F_n \Vert_{H}^2-a^2 \right|\right] \to 0 ,
\] 
then $F_n \stackrel{\textrm{Law}}{\to} X$, as $n\to \infty$.
\end{enumerate}
\end{corollary}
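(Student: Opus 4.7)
The strategy is to specialize Theorem \ref{230416a} via the sufficient conditions (1')--(2') of Remark \ref{150316a} with $\alpha=1$, and then to rewrite the single surviving $\Gamma$-term in the concrete forms that appear in the hypotheses. With $X=aI_1(h_0)$, $\mathcal{N}(0,1)$, and $a\ne 0$, the target of Eq.~\eqref{071216a} corresponds to the parameters $k_1=k_2=0$, so that the polynomial given in Theorem \ref{230416a} reduces to $P(x)=x^2$, in particular $\deg(P)=2$, $P'(0)=0$ and $P''(0)=2$. The only nontrivial cumulant up to degree $\deg(P)$ is $\kappa_2(X)=a^2$, and since $F_n$ admits no $I_0$-term, $\kappa_1(F_n)=\E[F_n]=0=\kappa_1(X)$ automatically. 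Thus condition (1') of Remark \ref{150316a} collapses exactly to the given hypothesis $\kappa_2(F_n)\to a^2$.

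Next I would compute the single-term $\Gamma$-sum appearing in (2'):
\begin{align*}
\sum_{r=1}^{2} \frac{P^{(r)}(0)}{r!\,2^{r-1}}\,M_{r-1}(F_n)
= \frac{P''(0)}{2!\cdot 2}\bigl(\Gamma_1(F_n)-\E[\Gamma_1(F_n)]\bigr)
= \tfrac{1}{2}\bigl(\Gamma_1(F_n)-\kappa_2(F_n)\bigr),
\end{align*}
using the standard identity $\E[\Gamma_1(F_n)]=\kappa_2(F_n)$ for centered random variables in $\mathbb{D}^\infty$. By the triangle inequality,
\begin{align*}
\E\bigl[\bigl|\Gamma_1(F_n)-\kappa_2(F_n)\bigr|\bigr]
\leq \E\bigl[\bigl|\Gamma_1(F_n)-a^2\bigr|\bigr] + \bigl|\kappa_2(F_n)-a^2\bigr|,
\end{align*}
so (2') with $\alpha=1$ reduces to proving $\E[\,|\Gamma_1(F_n)-a^2|\,]\to 0$, the second summand vanishing by the cumulant hypothesis.

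For part (1), this is immediate: by definition $\Gamma_1(F_n)=\langle DF_n,-DL^{-1}F_n\rangle_H$, which is exactly the object assumed to converge to $a^2$ in $L^1$. For part (2), I would use that $F_n=I_p(f_{n,p})$ lies in the $p$-th Wiener chaos, so $LF_n=-pF_n$ and hence $-DL^{-1}F_n=\tfrac{1}{p}DF_n$; consequently $\Gamma_1(F_n)=\tfrac{1}{p}\|DF_n\|_H^2$, and again the given hypothesis is precisely the required $L^1$-convergence. In both cases, Remark \ref{150316a} (i.e.\ the sufficient direction of Theorem \ref{230416a}) yields $F_n\stackrel{\textrm{Law}}{\to}X$.

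There is no real obstacle here: the proof is essentially an unwrapping of the general criterion for the simplest possible target $P(x)=x^2$. The only small point requiring care is the identification $\E[\Gamma_1(F_n)]=\kappa_2(F_n)$ and, in (2), the chaos-specific relation $-DL^{-1}I_p(f)=\tfrac{1}{p}DI_p(f)$; both are standard and used throughout the references cited in Section \ref{cumulants}.
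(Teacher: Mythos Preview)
Your proposal is correct and follows exactly the approach the paper intends: it invokes Remark \ref{150316a} (conditions (1')--(2') with $\alpha=1$) to specialize Theorem \ref{230416a} to $P(x)=x^2$, then identifies $\Gamma_1(F_n)$ with $\langle DF_n,-DL^{-1}F_n\rangle_H$ in part (1) and with $\tfrac{1}{p}\|DF_n\|_H^2$ in part (2). The paper does not spell out a proof for this corollary beyond pointing to Remark \ref{150316a}, so your write-up is precisely the expected unwrapping.
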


\subsection{New applications}
For the rest of the section we suppose that $p$ is  even. Notice that if $p$ is odd, we cannot have:
\[I_p(f_{n, p}) \stackrel{\textrm{Law}}{\to}aN + \sum_{i=1}^{k_1} b_i (R_i^2-1)
,\] as $n\to \infty$ for $k_1\ge 1$, $\sup_n \Vert f_{n, p}\Vert_{H^{\otimes p}}<\infty$ and  $\sum_{i=1}^k b_i^3 \ne 0$. This can be checked using the third cumulant and Eq.~\eqref{260416x}, see \cite[Remark 1.3]{nourdin2009b}.
More generally, if $p\ge 3$ is odd, we can exclude a large set of possible target random variables by using the fact that all odd-order cumulants are zero,
see Remark  \ref{141018f} for details.

The following Lemma shall be needed to prove a sufficient criterion based on the convergence of some contractions and cumulants.

\begin{lemma}\label{070316a}
Consider $p_1$, $p_2\ge 1$, $f\in H^{\odot p_1}$ and $g\in H^{\odot p_2}$. We have for $0\le r \le p_1 \wedge p_2$:
\begin{align*}
 \Vert f \tilde{\otimes}_r g \Vert_{p_1+p_2-2r}^2 &\le  \Vert f {\otimes}_r g \Vert_{p_1+p_2-2r}^2 
= \left| \langle   f \otimes_{p_1-r} f , g\otimes_{p_2-r} g \rangle_{H^{\otimes (2r)}}  \right| \\
&\le  \Vert f \otimes_{p_1-r} f \Vert_{H^{\otimes (2r)}} \,  \Vert g \otimes_{p_2-r} g \Vert_{H^{\otimes (2r)}} \le \Vert f \otimes_{p_1-r} f \Vert_{H^{\otimes (2r)}} \,  \Vert g  \Vert_{H^{\otimes p_2}}^2 \\
&\le   \Vert f  \Vert_{H^{\otimes p_1}}^2 \,  \Vert g  \Vert_{H^{\otimes p_2}}^2.
\end{align*}
\end{lemma}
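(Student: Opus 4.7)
The plan is to establish the chain of (in)equalities step by step, using only the definition of the contraction, the fact that symmetrization is an orthogonal projection, Cauchy–Schwarz, and Fubini's theorem.

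First, for the leftmost inequality, I would recall that $f \tilde{\otimes}_r g$ is the symmetrization of $f \otimes_r g$, and the symmetrization operator on $H^{\otimes(p_1+p_2-2r)}$ is the orthogonal projection onto $H^{\odot(p_1+p_2-2r)}$. Since orthogonal projections are non-expanding, $\|f \tilde{\otimes}_r g\|_{H^{\otimes(p_1+p_2-2r)}} \leq \|f \otimes_r g\|_{H^{\otimes(p_1+p_2-2r)}}$.

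Next, for the key equality, I would write out both sides in integral form. By the definition of $\otimes_r$,
\[
\|f \otimes_r g\|_{H^{\otimes(p_1+p_2-2r)}}^2 = \int \Bigl( \int f(u,s)\,g(v,s)\,d\lambda^r(s)\Bigr)^2 d\lambda^{p_1-r}(u)\,d\lambda^{p_2-r}(v),
\]
where $u \in [0,T]^{p_1-r}$, $v \in [0,T]^{p_2-r}$, $s \in [0,T]^r$. Expanding the square and applying Fubini (all functions are in $L^2$, so the iterated integrals are absolutely convergent), the outer integrals in $u$ collapse to $(f \otimes_{p_1-r} f)(s,s')$ with $s,s' \in [0,T]^r$, and likewise the outer integrals in $v$ yield $(g \otimes_{p_2-r} g)(s,s')$. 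Hence the quantity above equals $\langle f \otimes_{p_1-r} f, g \otimes_{p_2-r} g \rangle_{H^{\otimes 2r}}$, and since this squared norm is non-negative the absolute value brackets can be inserted harmlessly.

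The remaining inequalities are Cauchy–Schwarz. Applied to the scalar product on $H^{\otimes 2r}$, this gives the first of them. For the bound $\|g \otimes_{p_2-r} g\|_{H^{\otimes 2r}} \leq \|g\|_{H^{\otimes p_2}}^2$, I would apply Cauchy–Schwarz pointwise in the free variables $s,s' \in [0,T]^r$ to the integral
\[
(g \otimes_{p_2-r} g)(s,s') = \int g(s,t)\,g(s',t)\,d\lambda^{p_2-r}(t),
\]
obtaining $|(g \otimes_{p_2-r} g)(s,s')|^2 \leq \|g(s,\cdot)\|_{H^{\otimes(p_2-r)}}^2 \|g(s',\cdot)\|_{H^{\otimes(p_2-r)}}^2$, and then integrating over $(s,s')$ and using Fubini to recognize the result as $\|g\|_{H^{\otimes p_2}}^4$. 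The very last inequality follows by the same argument applied to $f$, giving $\|f \otimes_{p_1-r} f\|_{H^{\otimes 2r}} \leq \|f\|_{H^{\otimes p_1}}^2$.

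There is no real obstacle here; the only point needing a little care is the bookkeeping of the variables in the Fubini step that identifies $\|f \otimes_r g\|^2$ with the inner product $\langle f \otimes_{p_1-r} f, g \otimes_{p_2-r} g\rangle$. All other steps are direct applications of Cauchy–Schwarz and the projection property.
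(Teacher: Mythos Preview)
Your proof is correct and follows essentially the same route as the paper's own argument: the paper invokes the projection property for the first inequality, cites \cite[Eq.~(2.5)]{rosinski1} for the key identity $\|f\otimes_r g\|^2=\langle f\otimes_{p_1-r}f,\,g\otimes_{p_2-r}g\rangle$, and then applies Cauchy--Schwarz twice (once on the inner product, once via the general bound $\|f\otimes_r g\|\le\|f\|\,\|g\|$). You have simply unpacked each of these citations into an explicit Fubini/Cauchy--Schwarz computation, which is perfectly fine and makes the proof self-contained.
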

\begin{proof}
The first inequality is a standard result. The first equality follows from \cite[Eq. (2.5)]{rosinski1}. The next inequality follows from the Cauchy-Schwarz inequality. The last inequalities follow from the following general result:
\[
\Vert f \otimes_r g \Vert_{H^{\otimes (p_1+p_2-2r)}}\le \Vert f \Vert_{H^{\otimes p_1}}\Vert g \Vert_{H^{\otimes p_2}},
\]
which is, again, a consequence of the Cauchy-Schwarz inequality.
\end{proof}

\begin{theorem}  \label{230416aa}
Consider $k_1, k_2\ge 0$ and 
\[X=I_1(f_1)+I_2(f_2)=a N + \sum_{i=1}^{k_1} b_i(R_i^2-1)+\sum_{i=1}^{k_2} [c_i(P_i^2-1)+d_iP_i],\]
as defined in Eq.~\eqref{210416a} and Eq.~\eqref{070416aaa} 
 where  $N, R_1, \ldots, R_{k_1}, P_1,\ldots, P_{k_2} {\sim}\mathcal{N}(0, 1)$. Suppose that at least one of the parameters $a, k_1, k_2$ is non-zero.
Consider a sequence $\left\lbrace F_n\right\rbrace_n$ of non-zero random variables with $F_n= I_p(f_{n, p})$ for $p\ge 4$ fixed, even, and $\left\lbrace f_{n, p}\right\rbrace_n\subset H^{\odot p}$. Define: \[P(x)=x^{1+1_{[a\ne 0]}} \prod_{j=1}^{k_1} (x-b_j)\prod_{j=1}^{k_2}(x-c_j)^2~; 
c_p:= 2 (p/2)! \begin{pmatrix}
p-1 \\ p/2-1
\end{pmatrix}^2 ~;
\]
\[
f_{n, p}\tilde{\otimes}_{p/2}^{(1)} f_{n, p}:=f_{n, p}~;~ f_{n, p} \tilde{\otimes}_{p/2}^{(l+1)}f_{n, p} := \left( f_{n, p}\tilde{\otimes}_{p/2}^{(l)} f_{n, p}\right) \tilde{\otimes}_{p/2} f_{n, p},\quad  \textrm{ for } l\in \mathbb{N}_{>0}.
\]
As $n\to \infty$,  we have $I_p(f_{n, p})\stackrel{\textrm{Law}}{\to}X$ if the following hold:
\begin{align}
&\kappa_r(I_p(f_{n, p}))\to \kappa_r(X),\quad \textrm{for } r=1,\ldots, \textrm{deg } (P),\label{230516b} \\
&\Vert f_{n, p} \tilde{\otimes}_l f_{n, p} \Vert_{H^{\otimes (2p-2l)}} \to 0,\quad \textrm{ for every } 1\le l \le p-1 \textrm{ with } l\ne p/2,  \label{070416a}\\
&\left\Vert \sum_{r=1}^{\textrm{deg}P} \frac{P^{(r)}(0)}{r!2^{r-1}}  f_{n, p} \tilde{\otimes}_{p/2}^{(r)} f_{n, p} c_p^{r} \right\Vert_{H^{\otimes p}}\to 0. \label{050316m}
\end{align}
\end{theorem}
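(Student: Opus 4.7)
The plan is to verify the sufficient conditions (1') and (2') from Remark \ref{150316a} with $\alpha=2$. Condition (1') coincides with the cumulant hypothesis \eqref{230516b}, so only condition (2') has to be established, i.e.\ the $L^2$-convergence to zero of
\[
S_n := \sum_{r=1}^{\deg(P)} \frac{P^{(r)}(0)}{r!\,2^{r-1}}\bigl(\Gamma_{r-1}(F_n)-\E[\Gamma_{r-1}(F_n)]\bigr).
\]
Since $F_n$ lives in a single Wiener chaos, all $L^q$ norms are equivalent there, and in particular \eqref{230516b} forces $\sup_n \Vert f_{n,p}\Vert_{H^{\otimes p}}^2 < \infty$ via $\kappa_2(F_n)=p!\,\Vert f_{n,p}\Vert_{H^{\otimes p}}^2 \to \kappa_2(X)$.

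The first step is to establish by induction on $r\ge 0$ an explicit chaotic decomposition of $\Gamma_r(I_p(f))$ for any $f\in H^{\odot p}$. Starting from $\Gamma_0(I_p(f))=I_p(f)$ and iterating the defining identity $\Gamma_{r+1}(F)=\langle DF,-DL^{-1}\Gamma_r(F)\rangle_H$ together with Eq.~\eqref{24082014a} and the multiplication formula for multiple Wiener integrals, one sees that $\Gamma_r(I_p(f))$ is a finite linear combination of multiple Wiener integrals of iterated contractions of $f$ with itself. Because $f\in H^{\odot p}$ and a single contraction $f\otimes_l f$ lives in $H^{\otimes(2p-2l)}$, the \emph{only} contraction order preserving the chaos order $p$ is $l=p/2$. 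A careful computation of the coefficient (using that $c_p=p\,(p/2-1)!\binom{p-1}{p/2-1}^2$ is exactly the coefficient arising from the multiplication formula at the $r=p/2-1$ term) yields
\[
\Gamma_{r-1}\bigl(I_p(f)\bigr) \;=\; c_p^{\,r-1}\, I_p\!\left(f\,\tilde{\otimes}_{p/2}^{(r)}\,f\right)\;+\;R_{r-1}(f),
\]
where $R_{r-1}(f)$ is a sum of multiple Wiener integrals of iterated contractions of $f$, each of which involves at least one contraction order $l\neq p/2$.

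With this decomposition in hand, I would split $S_n = J_p(S_n)+\sum_{q\neq p}J_q(S_n)$ into its Wiener chaos components. The "dominant" part of $J_p(S_n)$ is
\[
I_p\!\left(c_p^{-1}\sum_{r=1}^{\deg(P)} \frac{P^{(r)}(0)}{r!\,2^{r-1}}\,c_p^{r}\, f_{n,p}\,\tilde{\otimes}_{p/2}^{(r)}\,f_{n,p}\right),
\]
whose squared $L^2$-norm is, by the isometry \eqref{260416x}, equal to $p!\,c_p^{-2}$ times the square of the quantity in \eqref{050316m}, and thus tends to zero. Every other contribution to $J_p(S_n)$, as well as every contribution to $J_q(S_n)$ for $q\neq p$, comes from the "remainders" $R_{r-1}(f_{n,p})$ and therefore involves an iterated contraction of $f_{n,p}$ with itself in which at least one step uses an order $l\neq p/2$. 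Applying Lemma \ref{070316a} at that particular step (and the trivial bound $\Vert g\otimes_s h\Vert\le \Vert g\Vert\,\Vert h\Vert$ at the harmless steps), the $L^2$-norm of each such term is dominated by $\Vert f_{n,p}\,\tilde{\otimes}_l f_{n,p}\Vert_{H^{\otimes(2p-2l)}}$ for some $l\neq p/2$, multiplied by a fixed power of the bounded quantity $\Vert f_{n,p}\Vert_{H^{\otimes p}}$. By \eqref{070416a} these contributions vanish in the limit.

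The main obstacle is the inductive bookkeeping for the chaotic decomposition of $\Gamma_r(I_p(f))$: one has to verify simultaneously (a) that the iterated $\tilde{\otimes}_{p/2}$ term picks up exactly the factor $c_p^{r-1}$ at each step, and (b) that every other term produced by the multiplication formula carries at least one contraction of an order different from $p/2$, so that it falls under the scope of hypothesis \eqref{070416a}. Once this structural lemma is proved, combining the norm estimates above with the hypothesis \eqref{050316m} yields $\Vert S_n\Vert_{L^2}\to 0$, and an appeal to Remark \ref{150316a} (equivalently, Theorem \ref{230416a}) concludes the proof.
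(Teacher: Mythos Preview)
Your proposal is correct and follows essentially the same approach as the paper. The paper obtains the chaotic decomposition of $\Gamma_{r-1}(I_p(f_{n,p}))-\E[\Gamma_{r-1}(I_p(f_{n,p}))]$ by citing \cite[Proposition~2.1]{azmoodeh} (which supplies the index set $S_i$ and the recursive constants $c_p(r_1,\dots,r_i)$ with $c_p(p/2,\dots,p/2)=c_p^{r-1}$), then shows that every iterated contraction with $(r_1,\dots,r_i)\neq(p/2,\dots,p/2)$ vanishes in norm via Lemma~\ref{070316a}, and finishes with \eqref{050316m} and Remark~\ref{150316a}---exactly your route. The one bookkeeping point you flag but do not carry out, and which the paper handles explicitly, is that the \emph{first} index $r_{j+1}\neq p/2$ in such a tuple necessarily satisfies $r_{j+1}\neq p$ (by the constraints defining $S_i$), so that after applying Lemma~\ref{070316a} the bound indeed involves $\Vert f_{n,p}\otimes_{p-r_{j+1}} f_{n,p}\Vert$ with $p-r_{j+1}\in\{1,\dots,p-1\}\setminus\{p/2\}$ and hypothesis~\eqref{070416a} applies.
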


\begin{proof}
\begin{enumerate}[(1)]
\item We notice that, for $p\ge 4$ even, condition \eqref{070416a}  is equivalent to:
\begin{align*}
\Vert f_{n, p} {\otimes}_l f_{n, p} \Vert_{H^{\otimes (2p-2l)}} \to 0,\quad \textrm{ for every } 1\le l \le p-1 \textrm{ with } l\ne p/2,
\end{align*}
see \cite[Proposition 3.1]{nourdin2009b}. 
If $p=2$, we have necessarily $k_2=0$ and the convergence in law of sequences $\left\lbrace I_2(f_{n, 2})\right\rbrace_n$ is completely characterised by necessary and sufficient criteria in \citep{ECP2023}, see Remark \ref{070416dd}.
\item We first prove that, except for the contractions $f_{n, p}\tilde{\otimes}_{p/2}^{(r)} f_{n, p}$ for $r=2, \ldots, \textrm{deg}(P)$,
 all the contractions, appearing in the representation of  $M_{r-1}=\Gamma_{r-1}(I_p(f_{n, p}))-\E[\Gamma_{r-1}(I_p(f_{n, p}))]$ for $r=2,\ldots,  \textrm{deg}(P)$, converge to zero (in the corresponding Hilbert-space norm).
With \citep[Proposition 2.1]{azmoodeh}, we have for $i\ge 1$ that $\Gamma_{i}(I_p(f_{n, p}))-\E[\Gamma_i(I_p(f_{n, p}))]$ equals:
\begin{align*}
&\sum_{(r_1,\ldots,r_i)\in S_i} c_p(r_1, \ldots, r_i)\\
&\rule{5mm}{0mm} \times I_{(i+1)p-2r_1-\ldots-2r_i}((\ldots( f_{n,p}\tilde{\otimes}_{r_1}f_{n,p} )\tilde{\otimes}_{r_2}f_{n,p})\ldots f_{n,p} ) \tilde{ \otimes}_{r_i}f_{n,p}),
\end{align*} 
where $S_i$ is the set of elements $(r_1, \ldots, r_i)$ such that:
\begin{align*}
1\le r_1 \le p~,~ \ldots~,~ 1\le r_i  \le (ip-2r_1-\ldots-2r_{i-1})\wedge p;\\
r_1<p~,~\ldots~,~ r_1+\ldots+r_{i-1}<ip/2~,~(i+1)p-2r_1-\ldots-2r_i\ne 0,
\end{align*} 
and $c_p(r_1, \ldots, r_i)$ is defined recursively, see \citep[Proposition 2.1]{azmoodeh}. For $(r_1, \ldots, r_i)\in S_i\setminus \left\lbrace (p/2, \ldots, p/2) \right\rbrace$, we prove that, as $n\to \infty$: 
\begin{align}
\Vert
(\ldots( f_{n, p}\tilde{\otimes}_{r_1}f_{n, p} )\tilde{\otimes}_{r_2}f_{n, p})\ldots f_{n, p} ) \tilde{ \otimes}_{r_i}f_{n, p} \Vert_{H^{\otimes [(i+1)p-2r_1-\ldots-2r_i]}} \to 0. \label{070416b}
\end{align}
If $r_1\ne p/2$, we have $\Vert f_{n, p} \tilde{\otimes }_{r_1} f_{n, p}\Vert_{H^{\otimes (2p-2r_1)}} \to 0$, as  $n\to \infty$,
 since 
 $r_1\ne p$. If $i\ge 2$ and  $r_1=p/2$, let $r_1=\ldots=r_j=p/2$ and $r_{j+1}\ne p/2$ with $1\le j\le i-1$. 
We have $r_{j+1}\ne p$, since otherwise the definition of $S_i$ yields the following contradictions:
\[
\frac{jp}{2}+p= \sum_{k=1}^{j+1} r_k
< \frac{(j+2)p}{2},\quad \textrm{ if } j+1\le i-1,
\]
and:
\[
0\ne (i+1)p-2\sum_{k=1}^i r_k = (i+1)p-2(i-1 ) p/2 -2p =0,\quad \textrm{ if } j+1=i.
\]
Thus, with Lemma \ref{070316a}:
\begin{align*}
&\Vert
(\ldots( f_{n, p}\tilde{\otimes}_{r_1}f_{n, p} )\tilde{\otimes}_{r_2}f_{n, p})\ldots f_{n, p} ) \tilde{ \otimes}_{r_{j+1}}f_{n, p} \Vert_{H^{\otimes [(j+2)p-2r_1-\ldots-2r_{j+1}]}}^2 \\
&= \left\Vert
\left( f_{n, p} \tilde{\otimes}_{p/2}^{(j+1)}f_{n, p} \right) \tilde{\otimes}_{r_{j+1}} f_{n, p} 
 \right\Vert_{H^{\otimes [(j+2)p-2r_1-\ldots-2r_{j+1}]}}^2 \\☺
&\le \left\Vert f_{n, p}  \right\Vert_{H^{\otimes p} }^{2j+2} \, \Vert f_{n, p} \otimes_{p-r_{j+1}} f_{n, p}\Vert_{H^{\otimes( 2r_{j+1})}}.
\end{align*}
Since $1\le r_{j+1}\le p-1$ and $r_{j+1}\ne p/2$, we have that $1\le p-r_{j+1}\le p-1$ and $p-r_{j+1}\ne p/2$. Thus $\Vert f_{n, p} \otimes_{p-r_{j+1}} f_{n, p}\Vert_{H^{\otimes( 2r_{j+1})}}\to 0$, as $n\to \infty$, since $\sup_n \Vert f_{n, p}\Vert_{H^{\otimes p}}<\infty$, Eq.~\eqref{070416b} is proved.
\item With Eq.~\eqref{070416b}, we have for $i\ge 1$, as $n\to \infty$:
\begin{align*}
&\E\left[ 
\left(
\Gamma_i(I_p(f_{n, p}))- \E[\Gamma_i(I_p(f_{n, p}))] - c_p(p/2, \ldots, p/2) I_p\left( f_{n, p} \tilde{\otimes}_{p/2}^{(i+1)}f_{n, p} \right)
 \right)^2
\right]\\
&=\E\left[ 
\left(
\Gamma_i(I_p(f_{n, p}))- \E[\Gamma_i(I_p(f_{n, p}))] - c_p^i I_p\left( f_{n, p} \tilde{\otimes}_{p/2}^{(i+1)}f_{n, p} \right)
 \right)^2
\right]
\to 0,
\end{align*}
where we have used the definition of $c_p$ and \cite[Proposition 2.1]{azmoodeh}. 
For $i=0$, we have trivially $\E[(\Gamma_0(I_p(f_{n, p}))-\E[\Gamma_0(I_p(f_{n, p}))]-I_p(f_{n, p}))^2]=0$.
Thus with $M_{r-1}:=\Gamma_{r-1}(I_p(f_{n, p}))- \E[\Gamma_{r-1}(I_p(f_{n, p}))]$, as $n\to \infty$:
\begin{align}
\E\left[ 
\left( \sum_{r=1}^{{\textrm{deg}(P)}}\frac{P^{(r)}(0)}{r!2^{r-1}} \left(
M_{r-1} -c_p^{r-1} I_p\left( f_{n, p} \tilde{\otimes}_{p/2}^{(r)}f_{n, p} \right)\right)
 \right)^2
\right]\to 0. \label{070416c}
\end{align}
On the other hand, as $n\to \infty$:
\begin{align}
&\E\left[\left( \sum_{r=1}^{{\textrm{deg}(P)}}\frac{P^{(r)}(0)}{r!2^{r-1}}  
c_p^{r-1} I_p\left( f_{n, p} \tilde{\otimes}_{p/2}^{(r)}f_{n, p} \right)
 \right)^2 \right]\nonumber \\
 &= c_p^{-2}p! \left\Vert
 \sum_{r=1}^{\textrm{deg}(P)} \frac{P^{(r)}(0)}{r! 2^{r-1}} c_p^r f_{n, p} \tilde{\otimes}_{p/2}^{(r)} f_{n, p}
 \right\Vert^2_{H ^{\otimes p}} \to 0. \label{070416d}
\end{align}
  The result follows with Eq.~\eqref{070416c} and \eqref{070416d}, the reverse triangle inequality, Theorem \ref{230416a} and Remark \ref{150316a}.
  \end{enumerate}
\end{proof}

\begin{remark} \label{070416dd}
If $p=2$ and $\left\lbrace f_{n, 2}\right\rbrace_n\subset H^{\odot 2}$, it is known that  $I_2(f_{n,2})\stackrel{\textrm{Law}}{\to}X'$
 as $n\to \infty$ implies that $X'$ has a representation as in Eq.~\eqref{231116a} with $k_1\le \infty$ and $k_2=0$. 
In particular, if $k_1<\infty$, $k_2=a=0$ and $p=2$,   Theorem \ref{230416aa} holds and is the sufficient part of \cite[Proposition 3.1]{azmoodeh}. It is shown in \citep{azmoodeh} that the conditions of Theorem \ref{230416aa} are also necessary for the case $p=2$. The case $p=2$, $k_1<\infty$, $a\ne 0$ and $k_2= 0$ is not covered by \citep{azmoodeh} but it can be shown that the conditions given in Theorem \ref{230416aa} are also necessary conditions for this case.
This can be proved using \cite[Theorem 3.4]{ECP2023} and \cite[Eq. (2.7.17)]{Peccati}. 
\end{remark}

 {
We give now an example for the use of Theorem \ref{230416aa} in the case $k_2=0$  where the approximating sequence lives in a fixed Wiener chaos. We notice that it is so far unknown whether a sequence living in a fixed chaos can converge to a target variable as in Theorem \ref{230416aa} with $k_2\ne 0$, see Remark \ref{141018f}.
}

\begin{example}
Consider $p\ge 2 $ and sequences of non-zero functions $\left\lbrace f_{n, p} \right\rbrace_n $, $ \left\lbrace g_{n, p} \right\rbrace_n \subset H^{\odot p}$ and $\left\lbrace  h_{n, 2p}\right\rbrace_n \subset H^{\odot (2p)}$ such  that the sequences $\left\lbrace 
I_p(f_{n, p}) \right\rbrace_n$, $\left\lbrace 
I_p(g_{n, p}) \right\rbrace_n$ and $\left\lbrace 
I_{2p}(h_{n, 2p}) \right\rbrace_n$ converge in law to a standard normal variable. We also suppose that $\langle f_{n,p}, g_{n,p}\rangle_{H^{\otimes p}}\to 0$, as $n\to \infty$. We have then, see \cite[Theorem 1]{MR2126978}, as $n\to \infty$:
\begin{align*}
(I_p(f_{n,p}), I_p(g_{n,p}), I_{2p}(h_{n,2p}))^\top\stackrel{\textrm{Law}}{\to} (R_1, R_2, N)^\top,
\end{align*}
where $(R_1, R_2, N)^\top$ has a $3$-dimensional standard normal distribution. Using the continuous mapping theorem,  \cite[Theorem 1]{nualart2005} and proceeding as in \cite[Proposition 4.1]{nourdin2009b}, it is easy to see that, as $n\to \infty$:
\begin{align*}
I_{2p}(\varphi_{n,2p})\stackrel{\textrm{Law}}{\to}X= N+(R_1^2-1)- (R_2^2-1),
\end{align*} 
where $\varphi_{n,2p}:=f_{n, p}\tilde{\otimes} f_{n, p}- g_{n, p}\tilde{\otimes} g_{n, p}+ h_{n, 2p}$.
Alternatively the latter convergence in law can be proved using Theorem \ref{230416aa}. 
Define: \begin{align}
k_p:=\frac{1}{(p/2)! \begin{pmatrix}
p-1 \\ p/2-1
\end{pmatrix}^2} = \frac{4}{ (p/2)! \begin{pmatrix}
p \\ p/2 
\end{pmatrix}^2}~,~c_p:=2 (p/2)! \begin{pmatrix}
p-1 \\ p/2-1
\end{pmatrix}^2. \label{300516a}
\end{align}
We shall need the following limits, as $n\to \infty$:
\begin{enumerate}[(a)]
\item $\langle f_{n, p} \tilde{\otimes}f_{n, p} , g_{n, p} \tilde{\otimes} g_{n, p}\rangle_{H^{\otimes (2p)}}\to 0$,
\item $ \langle f_{n, p} \tilde{\otimes} f_{n, p} , h_{n, 2p}\rangle_{H^{\otimes (2p)}}\to 0 $ and $ \langle g_{n, p} \tilde{\otimes} g_{n, p} , h_{n, 2p}\rangle_{H^{\otimes (2p)}}\to 0 $,
\item $\Vert (f_{n, p} \tilde{\otimes} f_{n, p})\tilde{\otimes}_p (g_{n, p} \tilde{\otimes} g_{n, p})  \Vert_{H^{\otimes(2p)}}\to 0$,
\item $\Vert (f_{n, p} \tilde{\otimes} f_{n, p}) \tilde{\otimes}_p h_{{n, 2p}} \Vert_{H^{\otimes (p)}}\to 0 $ and  $\Vert (g_{n, p} \tilde{\otimes} g_{n, p}) \tilde{\otimes}_p h_{{n, 2p}} \Vert_{H^{\otimes (p)}}\to 0 $,
\item $\Vert \varphi_{n, 2p}\tilde{\otimes}_p^{(r)} \varphi_{n, 2p} -  k_{2p}^{r-1} f_{n, p} \tilde{\otimes} f_{n, p} -k_{2p}^{r-1} g_{n, p} \tilde{\otimes} g_{n, p} \Vert_{H^{\otimes (2p)}}\to 0$ for $r=2, 3, 4$,
\item $\Vert \varphi_{n, 2p}\tilde{\otimes}_l \varphi_{n, 2p}  \Vert_{H^{\otimes (4p-2l)}}\to 0$ for $1\le l \le 2p-1$ and $l\ne p$.
\end{enumerate}
These statements (a)-(f). can be proved using \cite[Theorem 1]{nualart2005} and \cite[Theorem 1.2]{nourdin2009b}
We  check the conditions of Theorem \ref{230416aa}.
\begin{enumerate}[(1)]
\item We prove that $\lim_n \kappa_4(I_{2p}(\varphi_{n, 2p}))=\kappa_4(X)$. The convergence of the cumulants of order 2 and 3 can be seen similarly using (a)-(f) and  \cite[Eq. (3.4)]{nourdin2009b}. We use $\kappa_4(I_{2p}(\varphi_{n, 2p}))=\E[I_{2p}(\varphi_{n, 2p})^4]-3\E[I_{2p}(\varphi_{n,  2p})^2]^2$ and   \cite[Eq. (3.6)]{nourdin2009b}:
\begin{align*}
&\lim_n \kappa_4(I_{2p}(\varphi_{n,2p}))\\
&= \lim_n  \frac{3}{2p} \sum_{r=1}^{2p-1}(2p)^2 (r-1)! \begin{pmatrix}
2p-1 \\ r-1
\end{pmatrix}^2 r! \begin{pmatrix}
2p \\ r 
\end{pmatrix}^2 \, (4p-2r)! \\
&\rule{5mm}{0mm}\times\Vert \varphi_{n, 2p} \tilde{\otimes}_r \varphi_{n, 2p}\Vert^2_{H^{\otimes(4p-2r)}}\\
&= \lim_n  \frac{3}{2p} (2p)^2 (p-1)! \begin{pmatrix}
2p-1 \\ p-1
\end{pmatrix}^2 
p! \begin{pmatrix}
2p \\ p 
\end{pmatrix}^2 \, (2p)! \Vert \varphi_{n, 2p} \tilde{\otimes}_p \varphi_{n, 2p}\Vert^2_{H^{\otimes(2p)}}\\
&= \lim_n  \frac{3}{2p} (2p)^2 (p-1)! \begin{pmatrix}
2p-1 \\ p-1
\end{pmatrix}^2 
p! \begin{pmatrix}
2p \\ p 
\end{pmatrix}^2 \, (2p)! \\
&\rule{5mm}{0mm}\times\Vert 
k_{2p}f_{n, p}\tilde{\otimes} f_{n, p} + k_{2p}g_{n, p}\tilde{\otimes}g_{n, p}
\Vert^2_{H^{\otimes(2p)}}\\
&= \lim_n  \frac{3}{2p} (2p)^2 (p-1)! \begin{pmatrix}
2p-1 \\ p-1
\end{pmatrix}^2 
p! \begin{pmatrix}
2p \\ p 
\end{pmatrix}^2 \, (2p)! \\
&\rule{5mm}{0mm} \times \left( k_{2p}^2\Vert f_{n, p}\tilde{\otimes} f_{n, p}
\Vert^2_{H^{\otimes(2p)}}+k_{2p}^2\Vert g_{n, p}\tilde{\otimes} g_{n, p}
\Vert^2_{H^{\otimes(2p)}}\right)\\
&= \lim_n  \frac{3}{2p} (2p)^2 (p-1)! \begin{pmatrix}
2p-1 \\ p-1
\end{pmatrix}^2 
p! \begin{pmatrix}
2p \\ p 
\end{pmatrix}^2 \, (2p)! \\
&\rule{5mm}{0mm}\times\left( \Vert (f_{n, p}\tilde{\otimes} f_{n, p})\tilde{\otimes}_p (f_{n, p} \tilde{\otimes} f_{n, p})
\Vert^2_{H^{\otimes(2p)}}\right.\\
&\rule{5mm}{0mm}+\left.\Vert (g_{n, p}\tilde{\otimes} g_{n, p})\tilde{\otimes}_p (g_{n, p} \tilde{\otimes} g_{n, p})
\Vert^2_{H^{\otimes(2p)}}\right)\\
&= \kappa_4(R_1^2-1) + \kappa_4(R_2^2-1)=\kappa_4(X).
\end{align*}
The fifth equality follows from \cite[Theorem 1.2.(iv)]{nourdin2009b}. For the last equality we have used that, for independent random variables, the cumulant of the sum equals the sum of the cumulants and $\kappa_r(N)=0$ for every $r\ge 3$. We have thus proved Eq.~\eqref{230516b}. Notice that Eq.~\eqref{070416a} holds because of (f). 
We have $P(x)=x^2 (x-1)(x+1)$ and:\[
P'(0)=P'''(0)=0~,~ {P''(0)}/(2!2)=-1/2~, ~{P^{(4)}(0)}/(4!2^3)=1/8.
\]
We check now Eq.~\eqref{050316m}:
\begin{align*}
&\left\Vert \sum_{r=1}^4 \frac{P^{(r)}(0)}{r!2^{r-1}} \varphi_{n, 2p} \tilde{\otimes}_p^{(r)} \varphi_{n, 2p}c_{2p}^r \right\Vert_{H^{\otimes(2p)}} \\
&\le \left\Vert -\frac{c_{2p}^2}{2} \left( \varphi_{n, 2p} \tilde{\otimes}_p \varphi_{n, 2p}-k_{2p} f_{n, p}\tilde{\otimes} f_{n, p}-k_{2p}g_{n, p}\tilde{\otimes} g_{n, p} \right)\right\Vert_{H^{\otimes (2p)}}\\
&\rule{5mm}{0mm}+ \left\Vert \frac{c_{2p}^4}{8} \left( \varphi_{n, 2p} \tilde{\otimes}_p^{(4)} \varphi_{n, 2p}-k_{2p}^3 f_{n, p}\tilde{\otimes} f_{n, p}-k_{2p}^3g_{n, p}\tilde{\otimes} g_{n, p} \right)\right\Vert_{H^{\otimes (2p)}}\\
&\rule{5mm}{0mm}+ c_{2p}^2 k_{2p}\left( \left\Vert -\frac{1}{2} f_{n, p} \tilde{\otimes} f_{n, p} + \frac{c_{2p}^2k_{2p}^2}{8} f_{n, p} \tilde{\otimes} f_{n, p}\right\Vert_{H^{\otimes (2p)}}\right. \\
&\rule{5mm}{0mm} +\left.  \left\Vert -\frac{1}{2} g_{n, p} \tilde{\otimes} g_{n, p} + \frac{c_{2p}^2k_{2p}^2}{8} g_{n, p} \tilde{\otimes} g_{n, p}\right\Vert_{H^{\otimes (2p)}}\right)\to 0,
\end{align*}
as $n\to \infty$. Theorem \eqref{230416aa} yields now $I_{2p}(\varphi_{n, 2p})\stackrel{\textrm{Law}}{\to} X$, as $n\to \infty$.
\end{enumerate}
\end{example}

We consider now the convergence in law to random variable with a centered $\chi^2$ law and compare the new criterion of   Theorem \ref{230416aa} with the main result of \cite{nourdin2009b}. We shall see that in this case, both criteria are equivalent.

\begin{theorem} \label{240416c}
Consider $k_1> 0$, $a=k_2=0$ and 
$X= \sum_{i=1}^{k_1} (R_i^2-1)$ as defined in  Eq.~\eqref{210416a} and Eq.~\eqref{070416aaa}, where  $R_1, \ldots,  R_{k_1}\stackrel{\textrm{i.i.d.}}{\sim}\mathcal{N}(0, 1)$. 
Consider a sequence $\left\lbrace F_n\right\rbrace_n$ of non-zero random variables with $F_n=  I_p(f_{n, p})$ for $p\ge 2$ even fixed and $\left\lbrace f_{n, p}\right\rbrace_n\subset H^{\odot p}$. Define $ P(x)=x  (x-1)^{k_1}$ and suppose that $\kappa_2(F_n)\to 2k_1 $, as $n\to \infty$. The following conditions a., b. and c. are equivalent, as $n\to \infty$: 
\begin{enumerate}[a.]
\item $F_n \stackrel{\textrm{Law}}{\to} X$,
\item \begin{enumerate}[1)]
\item $\displaystyle{\Vert f_{n, p} \tilde{\otimes}_l f_{n, p} \Vert_{H^{\otimes (2p-2l)}} \to 0,\quad \textrm{ for every } 1\le l \le p-1 \textrm{ with } l\ne p/2,}$,
\item $\Vert  f_{n, p} \tilde{\otimes}_{p/2} f_{n, p} - 2/c_p f_{n, p}\Vert_{H^{\otimes p}}\to 0$ 
\end{enumerate}
\item \begin{enumerate}[1)]
\item $\displaystyle{\Vert f_{n, p} \tilde{\otimes}_l f_{n, p} \Vert_{H^{\otimes (2p-2l)}} \to 0,\quad \textrm{ for every } 1\le l \le p-1 \textrm{ with } l\ne p/2,}$,
\item $\displaystyle{\left\Vert \sum_{r=1}^{\textrm{deg}(P)} \frac{P^{(r)}(0)}{r!2^{r-1}}  f_{n, p} \tilde{\otimes}_{p/2}^{(r)} f_{n, p} c_p^{r} \right\Vert_{H^{\otimes p}} \to 0}$ 
\item $\kappa_r(I_p(f_{n,p}))\to \kappa_r(X),\quad \textrm{for } r=1,\ldots, \textrm{deg } (P).$
\end{enumerate}
\end{enumerate}
\end{theorem}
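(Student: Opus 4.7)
The plan is to prove the three-way equivalence by connecting (a) to (b) via the Nourdin--Peccati chi-squared criterion, (c) to (a) via the general Theorem \ref{230416aa}, and closing the cycle with (b) $\Rightarrow$ (c) by a direct computation. First, \cite[Theorem 1.2]{nourdin2009b} applied in the present setting yields that, under the standing assumption $\kappa_2(F_n)\to 2k_1$, convergence in law of $F_n=I_p(f_{n,p})$ to the centered chi-squared $X=\sum_{i=1}^{k_1}(R_i^2-1)$ is equivalent precisely to conditions (b).1 and (b).2---the constant $2/c_p$ in (b).2 is exactly the normalization forced by that criterion for a target of this form. This gives (a) $\Leftrightarrow$ (b).

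Next, (c) $\Rightarrow$ (a) follows from Theorem \ref{230416aa} applied with $a=0$, $k_2=0$ and $b_1=\cdots=b_{k_1}=1$: then $P(x)=x(x-1)^{k_1}$ and conditions (c).1, (c).2, (c).3 are literally \eqref{070416a}, \eqref{050316m}, \eqref{230516b}. For the edge case $p=2$ (not covered directly by Theorem \ref{230416aa}, which requires $p\ge 4$), one invokes Remark \ref{070416dd}, where the necessity and sufficiency of the analogous conditions is discussed for the second chaos.

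The remaining work is (b) $\Rightarrow$ (c). Condition (c).1 is literally (b).1. For (c).3, note that (b) already implies (a) by the first step, and since $F_n$ lives in the fixed Wiener chaos of order $p$, hypercontractivity upgrades convergence in law to convergence of all moments, hence of all cumulants, to those of $X$. The substantive point is thus to derive (c).2. I would prove, by induction on $r\in\{1,\ldots,k_1+1\}$, that
\[
\Bigl\| f_{n,p}\,\tilde{\otimes}_{p/2}^{(r)}\,f_{n,p}\;-\;(2/c_p)^{r-1}\,f_{n,p}\Bigr\|_{H^{\otimes p}}\;\longrightarrow\;0,\qquad n\to\infty.
\]
The cases $r=1$ (trivial, by definition $f_{n,p}\tilde{\otimes}_{p/2}^{(1)}f_{n,p}:=f_{n,p}$) and $r=2$ (this is exactly (b).2) are immediate. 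For the inductive step, write $f_{n,p}\tilde{\otimes}_{p/2}^{(r+1)}f_{n,p}=\bigl(f_{n,p}\tilde{\otimes}_{p/2}^{(r)}f_{n,p}\bigr)\tilde{\otimes}_{p/2}f_{n,p}$, insert the induction hypothesis and (b).2, and control the two resulting error terms using Lemma \ref{070316a}, which bounds $\|\varepsilon\,\tilde{\otimes}_{p/2}\,f_{n,p}\|_{H^{\otimes p}}$ by $\|\varepsilon\|_{H^{\otimes p}}\|f_{n,p}\|_{H^{\otimes p}}$; the factor $\|f_{n,p}\|_{H^{\otimes p}}$ is bounded since $p!\|f_{n,p}\|_{H^{\otimes p}}^2=\kappa_2(F_n)\to 2k_1$.

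Once this induction is in hand, the triangle inequality plus the algebraic identity
\[
\sum_{r=1}^{k_1+1}\frac{P^{(r)}(0)}{r!\,2^{r-1}}\,c_p^r\,(2/c_p)^{r-1}\;=\;c_p\sum_{r=1}^{k_1+1}\frac{P^{(r)}(0)}{r!}\;=\;c_p\bigl(P(1)-P(0)\bigr)\;=\;0,
\]
which holds because $P(x)=x(x-1)^{k_1}$ vanishes at both $0$ and $1$, produces (c).2 after pulling out $f_{n,p}$ and using its boundedness. The main obstacle I anticipate is purely the bookkeeping in this inductive step: one must propagate the approximation through the contraction operation cleanly and verify that the coefficients telescope into exactly the Taylor coefficients of $P$. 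Everything else is a direct reduction to either Theorem \ref{230416aa} or the Nourdin--Peccati criterion.
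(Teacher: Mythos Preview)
Your argument is correct and the cycle closes cleanly. The direction (a)$\Leftrightarrow$(b) via \cite{nourdin2009b} and (c).3 via hypercontractivity match the paper exactly; your derivation of (c).2 from (b).2 is the same content as the paper's, repackaged: the paper rewrites the sum in (c).2 as a combination of differences $f_{n,p}\tilde{\otimes}_{p/2}^{(l+1)}f_{n,p}-\tfrac{2}{c_p}f_{n,p}\tilde{\otimes}_{p/2}^{(l)}f_{n,p}$ (using the binomial identity $\sum_{r=0}^{k_1}\binom{k_1}{r}(-1)^r=0$ to kill the boundary term), and then bounds each difference by Lemma~\ref{070316a} exactly as you do. Your explicit induction plus the Taylor identity $\sum_r P^{(r)}(0)/r!=P(1)-P(0)=0$ is an equivalent and arguably tidier way to say the same thing.

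The genuine difference is how you close the loop. You take (c)$\Rightarrow$(a) by invoking Theorem~\ref{230416aa} (with Remark~\ref{070416dd} for $p=2$), and then (a)$\Rightarrow$(b) via \cite{nourdin2009b}. The paper instead proves (c)$\Rightarrow$(b) directly: it expands $\|c_p^2 f_{n,p}\tilde{\otimes}_{p/2}^{(2)}f_{n,p}-2c_p f_{n,p}\|^2$ using the shifting identity $\langle f_{n,p}\tilde{\otimes}_{p/2}^{(k)}f_{n,p},\,f_{n,p}\tilde{\otimes}_{p/2}^{(l)}f_{n,p}\rangle=\langle f_{n,p}\tilde{\otimes}_{p/2}^{(k+1)}f_{n,p},\,f_{n,p}\tilde{\otimes}_{p/2}^{(l-1)}f_{n,p}\rangle$, and then computes the limits of the resulting inner products from (c).3 and the cumulant formula $\kappa_i(F_n)\approx p!(i-1)!c_p^{i-2}\langle f_{n,p}\tilde{\otimes}_{p/2}^{(i-1)}f_{n,p},f_{n,p}\rangle$ (splitting into the cases $k_1=2$ and $k_1\ge 3$). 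Your route is shorter and avoids this case analysis, at the cost of relying on the heavier Theorem~\ref{230416aa}; the paper's route is more self-contained and shows concretely how the cumulant convergence (c).3 feeds back into the single contraction condition (b).2.
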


\begin{proof}
The equivalence of a. and b. is proved in \cite{nourdin2009b}. We prove the equivalence of b. and c. for $k_1\ge 2$ since b. and c. are clearly equivalent if $k_1=1$.

 Suppose that b. holds, than a. holds as well and $\sup_n \E[F_n^2]<\infty$ together with the hypercontractivity property yields that $\sup_n \E[|F_n|^r]<\infty$ for every $r\ge 1$,   thus $\E[F_n^r]\to \E[X^r]$ for every $r\in \mathbb{N}$. Hence c.3) holds. 
We have with $k=k_1$:
\begin{align*}
&\sum_{l=1}^{k+1}\frac{P^{(l)}(0)}{l!2^{l-1}}c_p^l f_{n, p} \tilde{\otimes}_{p/2}^{(l)} f_{n, p} = (-1)^k \sum_{l=1}^{k+1}
\begin{pmatrix}
k \\ l-1
\end{pmatrix} (-2)^{1-l}
c_p^l f_{n, p} \tilde{\otimes}_{p/2}^{(l)} f_{n, p} \\
&= (-1)^k \left[
\sum_{l=1}^k c_p^{l+1 } (-2)^{-l} \sum_{r=l}^k \begin{pmatrix}
k \\r
\end{pmatrix} (-1)^{r-l} \left(
f_{n, p} \tilde{\otimes}_{p/2}^{(l+1)} f_{n, p} - \frac{2}{c_p} f_{n, p} \tilde{\otimes}_{p/2}^{(l)} f_{n, p}
 \right) \right.
 \\&\rule{5mm}{0mm} - \left.c_p (-2)^{-1+1} \sum_{r=1}^k
\begin{pmatrix}
k \\r 
\end{pmatrix} (-1)^{r-1} f_{n, p} + \begin{pmatrix}
k \\ 0 
\end{pmatrix} (-2)^0 c_p f_{n, p}  \right]\\
&= (-1)^k \left[
\sum_{l=1}^k c_p^{l+1 } (-2)^{-l} \sum_{r=l}^k \begin{pmatrix}
k \\r
\end{pmatrix} (-1)^{r-l} \left(
f_{n, p} \tilde{\otimes}_{p/2}^{(l+1)} f_{n, p} - \frac{2}{c_p} f_{n, p} \tilde{\otimes}_{p/2}^{(l)} f_{n, p}
 \right) \right.\\
&\rule{5mm}{0mm} - \left. (-1)  c_p 
f_{n, p} \sum_{r=0}^k \begin{pmatrix}
k \\r 
\end{pmatrix} (-1)^r
 \right]\\
&= (-1)^k \left[
\sum_{l=1}^k c_p^{l+1 } (-2)^{-l} \sum_{r=l}^k \begin{pmatrix}
k \\r
\end{pmatrix} (-1)^{r-l} \left(
f_{n, p} \tilde{\otimes}_{p/2}^{(l+1)} f_{n, p} - \frac{2}{c_p} f_{n, p} \tilde{\otimes}_{p/2}^{(l)} f_{n, p}
 \right) \right],
\end{align*}
where we have used that $\sum_{r=0}^k \begin{pmatrix}
k \\ r 
\end{pmatrix} (-1)^r =(-1+1)^k=0$.
We have for $l\ge 2$, as $n\to \infty$:
\begin{align*}
&\left\Vert f_{n, p} \tilde{\otimes}_{p/2}^{(l+1)} f_{n, p} - 2/c_p f_{n, p} \tilde{\otimes}_{p/2}^{(l)} f_{n, p} \right \Vert_{H^{\otimes p}}\\
&= \left\Vert ( \ldots ( f_{n, p}\tilde{\otimes}_{p/2} f_{n, p} - 2/c_p f_{n, p} ) \tilde{\otimes}_{p/2} f_{n, p} ) \ldots ) \tilde{\otimes}_{p/2} f_{n, p}\right\Vert_{H^{\otimes p}}\\
&\le \left\Vert f_{n, p} \tilde{\otimes}_{p/2} f_{n, p} - 2/c_p f_{n, p} \right \Vert_{H^{\otimes p}} \, \Vert f_{n, p}\Vert_{H^{\otimes p}}^{l-1}\to 0,
\end{align*}
hence c.2) follows with the triangle inequality.
Suppose now that c. holds. We have to prove that $\Vert c_p^2 f_{n, p}\tilde{\otimes}_{p/2}^{(2)}f_{n, p}-2c_p f_{n, p}\Vert^2_{H^{\otimes p}}\to 0$, as $n\to \infty$, or equivalently: 
\begin{align}
&c_p^4 \Vert f_{n, p} \tilde{\otimes}_{p/2}^{(2)} f_{n, p}\Vert^2_{H^{\otimes p}}-4c_p^3  \langle f_{n, p} \tilde{\otimes}_{p/2}^{(2)} f_{n, p}, f_{n, p}\rangle_{H^{\otimes p}} + 4 c_p^2 \Vert f_{n, p}\Vert_{H^{\otimes p}}^2\nonumber\\
&=c_p^4 \langle f_{n, p} \tilde{\otimes}_{p/2}^{(3)} f_{n, p}, f_{n, p}\rangle_{H^{\otimes p}} + 4c_p^2 \Vert f_{n, p}\Vert_{H^{\otimes 2}}^2 \nonumber \\
&\rule{5mm}{0mm}- 4c_p^3  \langle f_{n, p} \tilde{\otimes}_{p/2} f_{n, p}, f_{n, p}\rangle_{H^{\otimes p}}\to 0. \label{240416a}
\end{align}  
We have used in the last step that: \[\langle f_{n, p}\tilde{\otimes}_{p/2}^{(k)} f_{n, p}, f_{n, p}\tilde{\otimes}_{p/2}^{(l)} f_{n, p}\rangle_{H^{\otimes p}}=\langle f_{n, p}\tilde{\otimes}_{p/2}^{(k+1)} f_{n, p}, f_{n, p}\tilde{\otimes}_{p/2}^{(l-1)} f_{n, p}\rangle_{H^{\otimes p}},\] for all integers $k, l$ with $k\ge 1$ and $l\ge 2$. This can be checked directly using the integral representation of the contractions. 
\begin{itemize}
\item If $k_1=2$, c.2) yields that, as $n\to \infty$:
\begin{align*}
\left\Vert c_p f_{n, p} - c_p^2 f_{n, p}\tilde{\otimes}_{p/2} f_{n, p} + c_p^3/4 f_{n, p} \tilde{\otimes}_{p/2}^{(3)} f_{n, p} \right\Vert_{H^{\otimes p}}\to 0,
\end{align*}
hence, since $\sup_n \Vert f_{n, p}\Vert_{H^{\otimes p}}<\infty$, as $n\to \infty$:
\begin{align*}
\langle 4c_p f_{n, p}~,~ c_p f_{n, p} - c_p^2 f_{n, p}\tilde{\otimes}_{p/2} f_{n, p} + c_p^3/4 f_{n, p} \tilde{\otimes}_{p/2}^{(3)} f_{n, p} \rangle_{H^{\otimes p}}\to 0.
\end{align*}
The linearity of the scalar product yields that  relation \eqref{240416a} holds.
\item If $k_1\ge 3$, the calculations made in the proof of Theorem \ref{230416aa}, together with \citep[Theorem 8.4.4]{Peccati} and c.3) show that: 
\begin{align*}
0 &=\lim_n\left[\kappa_i(F_n) - p! (i-1)! c_p^{i-2} \langle f_{n, p}\tilde{\otimes}_{p/2}^{(i-1)} f_{n, p}, f_{n, p}\rangle_{H^{\otimes p}}\right]\\
0 &= \lim_n \left[ \langle f_{n, p}\tilde{\otimes}_{p/2}^{(i-1)} f_{n, p}, f_{n, p}\rangle_{H^{\otimes p}} - \frac{\kappa_i(F_n)}{p!(i-1)! c_p^{i-2}} \right]\\
&= \lim_n \left[ \langle f_{n, p}\tilde{\otimes}_{p/2}^{(i-1)} f_{n, p}, f_{n, p}\rangle_{H^{\otimes p}}\right] - \frac{2^{i-1}k_1}{p! c_p^{i-2}},
\end{align*}
for $i=3,\ldots, k_1+1$. Thus:
\begin{align*}
&\lim_n \left[c_p^4 \langle f_{n, p} \tilde{\otimes}_{p/2}^{(3)} f_{n, p}, f_{n, p}\rangle_{H^{\otimes p}} + 4c_p^2 \Vert f_{n, p}\Vert_{H^{\otimes 2}}^2 \right.\\
&\rule{5mm}{0mm}\left.- 4c_p^3  \langle f_{n, p} \tilde{\otimes}_{p/2} f_{n, p}, f_{n, p}\rangle_{H^{\otimes p}}\right] =\frac{k_1c_p^2(8+8-16)}{p!}=0.
\end{align*}
\end{itemize}
This completes the proof.
\end{proof}

\begin{remark} \label{141018f}
\begin{enumerate}[(1)]
\item The equivalence of a. and b. above is the main result of \cite[Theorem 1.2]{nourdin2009b}.
Theorem \ref{240416c} shows that,  although the polynomials defined throughout this paper may not always  be of minimal degree if some of the coefficients are equal, the criterion of Theorem  \ref{230416aa} is   \textit{necessary} and sufficient    in some situations. 
It is  yet unknown  if the conditions of Theorem \ref{230416aa} are always   \textit{necessary} and sufficient.
\item Notice that the problem of characterising possible target variables amongst all random variables with a representation as in Eq.~\eqref{070416aaa} is far from being solved if the approximating sequence lives in a fixed Wiener chaos. Indeed further research on this topic is needed in order to make a comprehensive statement, but it seems doubtful that $c_1(P_1^2-1)+d_1P_1$ with $c_1, d_1\ne 0$ is a possible target for a sequence living in a chaos of fixed order. For  every  sequence living in a Wiener chaos of odd order,  all moments of odd order vanish which reduces the class of possible target variables $X=\sum_{i=1}^k   [c_i (P_i^2-1)+d_iP_i]$ with $P_1,\ldots, P_k\stackrel{\textrm{i.i.d.}}{\sim}\mathcal{N}(0,1)$ considerably. The following criterion allows to exclude such target variables by just considering the coefficients $c_i$ and $d_j$. Suppose without loss of generality that the coefficients satisfy the following conditions:
\begin{itemize}
\item $c_1, \ldots , c_m\in \mathbb{R}^*$,
\item $c_{m+1}=\ldots=c_k=0$,
\item the coefficients $c_1,\ldots, c_m$ are sorted in increasing order of their absolute values and:
\begin{align*}
|c_1|&=|c_2|=\ldots= |c_{k_1}|< |c_{k_1+1}|=\ldots= |c_{k_2}|\\
&< \ldots < |c_{k_{l-1}+1}|=\ldots =|c_{k_l}|,
\end{align*}
where $k_0:=0< k_1\le k_2 \le \ldots \le k_l=m$. 
\end{itemize}
Then $I_p(f_{n,p})\stackrel{\textrm{Law}}{\to} X$, as $n\to \infty$, for $p\ge 3$ odd, implies that:
\begin{enumerate}[(i)]
\item $m$ is even,
\item $k_i$ is even for every $i\in \left\lbrace 1 , \ldots, l\right\rbrace$ and, more precisely:
\[
|\left\lbrace k_{i-1}+1\le j\le k_i : c_j>0\right\rbrace|= |\left\lbrace k_{i-1}+1\le j\le k_i : c_j<0\right\rbrace|.
\]
\item we have for every $i\in \left\lbrace 1 ,\ldots , l\right\rbrace$:
\[
\sum_{i=k_{i-1}+1}^{k_i} d_i^2 \cdot c_i =0.
\]
\end{enumerate}
An analogue version holds for target variables in class (B) of Lemma \ref{230516c}.
\end{enumerate}
\end{remark}

\color{black}

\section{Stable convergence}\label{stable}
In this section we consider sequences of non-zero random variables living in a finite sum of Wiener chaoses. The sequences are supposed to converge in law to a non-zero target variable $X$ and we ask whether the sequence also converges stably. Our first result follows from \cite[Theorem 1.3]{rosinski2}. As before, we shall write $\varphi_Y $ for the characteristic function of a random variable $Y$.

\begin{theorem} \label{190916T1}
Consider $p\ge 3$ and the random variable  $X\stackrel{\textrm{Law}}{=}aN+\sum_{i=1}^{k_1}b_i(R_i^2-1)+\sum_{i=1}^{k_2}[c_i(P_i^2-1)+d_iP_i]$ 
 where $N, R_1, \ldots, R_{k_1}, P_1,\ldots, P_{k_2}\stackrel{\textrm{i.i.d.}}{\sim}\mathcal{N}(0, 1)$.
Assume that    $\left\lbrace g_{n, p}\right\rbrace_n \subset H^{\odot p} $ 
and, as $n\to \infty$:
\begin{align}
\Vert g_{n, p} \otimes_{p-1} g_{n, p}\Vert_{H\otimes H} \to 0
 \label{190616a}.
\end{align}
If $I_p(g_{n,p})\stackrel{\textrm{Law}}{\to}X$, as $n\to \infty$, then:
\begin{align}
I_p(g_{n,p}) \stackrel{\textrm{st.}}{\to } X,\label{190616b}
\end{align}
or, equivalently, for every $\mathcal{F}$-measurable random variable $Z$:
\begin{align*}
\left( I_p(g_{n,p}), Z \right)^\top 
\stackrel{\textrm{Law}}{\to} \left( X, Z \right)^\top,
\end{align*}
where $\mathcal{F}$ is introduced in Section \ref{wiener}, and $X$ is independent of the underlying Brownian motion.
\end{theorem}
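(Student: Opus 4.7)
The strategy is to reduce the stable convergence statement to a joint weak convergence statement via the characterization given at the end of Section \ref{stableconvergence}, and then to invoke \cite[Theorem 1.3]{rosinski2} as a black box in order to establish asymptotic independence between $I_p(g_{n,p})$ and the first Wiener chaos.

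By the characterization recalled in Section \ref{stableconvergence}, \eqref{190616b} together with independence of $X$ from the Brownian motion is equivalent to the joint convergence
\[
\bigl(I_p(g_{n,p}),\, I_1(f)\bigr)^\top \stackrel{\textrm{Law}}{\to} \bigl(X,\, I_1(f)\bigr)^\top
\]
for every $f \in H$ with $\Vert f\Vert_H =1$, where $X$ is realized on an extension of the original probability space so as to be independent of $I_1(f)$. Since the $\p$-completion of $\sigma\{I_1(f) : f\in H,\, \Vert f\Vert_H =1\}$ equals $\mathcal{F}$, independence against all such $f$ will upgrade automatically to independence from the full Brownian motion, giving the last sentence of the theorem.

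Fix such an $f$. The second component of the vector is constant in $n$, while the first has uniformly bounded second moment because $\E[I_p(g_{n,p})^2] = p!\Vert g_{n,p} \Vert^2_{H^{\otimes p}}$ is bounded by hypothesis. Hence the sequence $\{(I_p(g_{n,p}), I_1(f))^\top\}_n$ is tight on $\mathbb{R}^2$, and any subsequential weak limit has the form $(Y, I_1(f))^\top$ with $Y\stackrel{\textrm{Law}}{=}X$ by the assumed marginal convergence $I_p(g_{n,p}) \stackrel{\textrm{Law}}{\to} X$.

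The crucial step is to identify the joint law, i.e.\ to show that $Y$ is independent of $I_1(f)$, and this is exactly where the contraction hypothesis \eqref{190616a} enters. By \cite[Theorem 1.3]{rosinski2}, the vanishing of $\Vert g_{n,p} \otimes_{p-1} g_{n,p} \Vert_{H\otimes H}$ together with the existence of a non-degenerate weak limit for $I_p(g_{n,p})$ forces $I_p(g_{n,p})$ to be asymptotically independent of any random variable lying in the first Wiener chaos; applied to the (constant) sequence $I_1(f)$ this yields the independence of $Y$ and $I_1(f)$. Every subsequential limit therefore has the product law of $X$ and $\mathcal{N}(0,1)$, which uniquely identifies the joint weak limit of the full sequence, completing the reduction in the first step. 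The only genuine difficulty is the invocation of \cite[Theorem 1.3]{rosinski2}; the surrounding arguments are a routine tightness-plus-subsequence translation of stable convergence into ordinary joint convergence in law.
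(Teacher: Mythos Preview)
Your approach is essentially the same as the paper's: reduce stable convergence to joint convergence with a generic $I_1(f)$ via the characterization in Section~\ref{stableconvergence}, and then appeal to \cite[Theorem~1.3]{rosinski2} for asymptotic independence. The paper does not use a tightness/subsequence argument but instead computes the limiting joint characteristic function directly; this is only a cosmetic difference.

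There is one technical point you gloss over that the paper handles explicitly. \cite[Theorem~1.3]{rosinski2} is stated for sequences normalized to unit variance, and its hypothesis is phrased in terms of $\Vert g_{n,p}\otimes_1 f\Vert_{H^{\otimes(p-1)}}\to 0$ rather than $\Vert g_{n,p}\otimes_{p-1}g_{n,p}\Vert_{H\otimes H}\to 0$. The paper therefore (i) passes from the latter to the former via the Cauchy--Schwarz-type bound $\Vert g_{n,p}\otimes_1 f\Vert^2\le \Vert g_{n,p}\otimes_{p-1}g_{n,p}\Vert\,\Vert f\Vert^2$, (ii) normalizes $g'_{n,p}:=g_{n,p}/\sqrt{p!\Vert g_{n,p}\Vert^2}$ so that $\E[I_p(g'_{n,p})^2]=1$, applies the cited theorem to obtain the factorized characteristic function for the pair $(I_1(f),I_p(g'_{n,p}))$, and (iii) transfers back to $(I_1(f),I_p(g_{n,p}))$ by a short $|e^{ix}-1|\le|x|$ estimate using $a_n:=(p!\Vert g_{n,p}\Vert^2)^{-1/2}\to A^{-1/2}$. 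Your sketch hides steps (i)--(iii) inside the black-box invocation; they are routine, but without them the hypotheses of \cite[Theorem~1.3]{rosinski2} are not literally met.
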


\begin{proof}
Consider $f\in H$ with $\Vert f \Vert_H=1 $. We have to prove that, as $n\to \infty$:
\begin{align}
(
I_1(f), I_p(g_{n, p}))^\top\stackrel{\textrm{Law}}{\to} ( I_1(f) , X 
)^\top , \label{131116a}
\end{align}
see Section \ref{stableconvergence}, where $I_1(f)$ is independent of   $X$.
Since $I_p(g_{n,p})$ is a non-zero random variable, we have $\Vert g_{n,p}\Vert_{H^{\otimes p}}\ne 0$ for every $n\in \mathbb{N}$. Define $g'_{n,p}:=\frac{g_{n,p}}{\sqrt{p! \Vert g_{n,p}\Vert_{H^{\otimes p}}^2}}$ for every $n$, then $I_p(g'_{n,p})\stackrel{\textrm{Law}}{\to} X/\sqrt{A}$, as $n\to \infty$, where $A:=a^2+2\sum_{i=1}^{k_1}b_i^2 + \sum_{i=1}^{k_2} (2c_i^2+d_i^2) \ne 0$. We have $\E[I_p(g'_{n,p})^2]=1$ and with Eq.~\eqref{190616a}, as $n\to \infty$:
\begin{align*}
\Vert g'_{n,p} \otimes_1 f\Vert^2_{H^{\otimes (p-1)}}\le \Vert g_{n,p} \otimes_{p-1} g_{n,p} \Vert_{H^{\otimes 2}} \, \Vert f \Vert_H^2 \, \frac{1}{p! \Vert g_{n,p}\Vert^2_{H^{\otimes p}}}\to 0.
\end{align*}
\cite[Theorem 1.3]{rosinski2} yields with $a_n:=1/\sqrt{p! \Vert g_{n,p}\Vert^2_{H^{\otimes p}}} \to a :={1}/{\sqrt{A}}$, as $n\to \infty$:
\begin{align*}
&\varphi_{I_1(f)}(t_1)\,\varphi_{X}(t_2/\sqrt{A}) = \varphi_{I_1(f)}(t_1)\,\varphi_{X/\sqrt{A}}(t_2)\\
&= \lim_n \E\left[ \exp\left(it_1 I_1(f) + it_2  I_p(g'_{n,p})    \right)\right] \\
&=\lim_n \E\left[ \exp\left(it_1 I_1(f) + it_2 a_n I_p(g_{n,p})    \right)\right] \\
&= \lim_n \left( \E\left[ \exp\left(it_1 I_1(f) + it_2 a  I_p(g_{n,p})    \right)\right] \right.\\
&\rule{5mm}{0mm}+\left.
\E\left[ \exp\left(it_1 I_1(f) + it_2 a  I_p(g_{n,p})    \right) \, \left( \exp(i t_2 (a_n-a) I_p(g_{n,p})) -1 \right) \right] 
 \right).
\end{align*}
We have with $|e^{ix}-1|\le |x|$ for every $x\in \mathbb{R}$:
\begin{align*}
&\left|  
\E\left[ \exp\left(it_1 I_1(f) + it_2 a  I_p(g_{n,p})    \right) \, \left( \exp(i t_2 (a_n-a) I_p(g_{n,p})) -1 \right) \right] 
\right| \\
&\le \E\left[ \left| \exp(i t_2 (a_n-a) I_p(g_{n,p})) -1  \right| \right] \le \E\left[ \left| t_2 (a_n-a) I_p(g_{n,p})  \right|\right] \\
&\le |t_2 (a_n-a) | \E[I_p(g_{n,p})^2]^{1/2} \le |t_2(a_n-a)| \, \sqrt{p! \sup_n \Vert g_{n,p} \Vert_{H^{\otimes p}}^2} \to 0,
 \end{align*}
 as $n\to \infty$. Hence:
 \begin{align*}
\varphi_{I_1(f)}(t_1)\,\varphi_{X}(t_2/\sqrt{A}) &=\lim_n  \E\left[ \exp\left(it_1 I_1(f) + it_2 a  I_p(g_{n,p})    \right)\right],
 \end{align*}
and with $t'_2:=t_2/\sqrt{A}$\,:
 \begin{align*}
\varphi_{I_1(f)}(t_1)\,\varphi_{X}(t'_2) &=\lim_n  \E\left[ \exp\left(it_1 I_1(f) + it'_2   I_p(g_{n,p})    \right)\right].
 \end{align*}
%
Since $(t_1,t'_2)\mapsto \varphi_{I_1(f)}(t_1)\varphi_X(t'_2)$ is the characteristic function of $(I_1(f), X)^\top$ where $I_1(f)$ is independent of $X$, we have that Eq.~\eqref{131116a}, or equivalently Eq.~\eqref{190616b} holds. Since  $f\in  H$ with $\Vert f \Vert_H=1$ is arbitrary, the statement follows with the remarks of Section \ref{stableconvergence}.  
\end{proof}

\begin{remark}
The proof of the previous theorem was straightforward since the converging sequence of random variables lives in a fixed Wiener chaos. Under these assumptions, \cite[Theorem 1.3]{rosinski2} yields the desired stable convergence. If the sequence of random variables is allowed to live in a finite sum of Wiener chaoses or if assumption \eqref{190616a} does not hold, the conditions ensuring stable convergence involve $\Gamma$-operators.
\end{remark}

For the next results, the target variables are (again) supposed to have the form:
\begin{align}
X\stackrel{\textrm{Law}}{=} a N + \sum_{i=1}^{k_1}b_i (R_i^2-1)+\sum_{i=1}^{k_2}\left[ c_i(P_i^2-1)+d_iP_i \right],\label{261116aa}
\end{align}
where $N, R_1, \ldots, R_{k_1}, P_1,  \ldots, P_{k_2}$ are independent standard normal variables and $b_i\ne 0 $ for $1\le i \le k_1$, as well as $c_jd_j\ne 0$ for $1\le j\le k_2$. We suppose that at least one of the parameters $k_1, k_2$ is positive. We shall use the convention $0^0:=1$.

\begin{theorem}\label{191016ta}
Consider $p\ge 3$, $k_1, k_2\ge 0$  and $X$ as defined in Eq.~\eqref{261116aa}.
Suppose that $\left\lbrace g_{n, l}\right\rbrace_n\subset H^{\odot l}$  
for $1\le l \le p$ and:
\begin{align*}
F_n=\sum_{l=1}^p I_l(g_{n, l}),
\end{align*}
Define $P(x)=x^{1+1_{[a\ne 0]}}\prod_{i=1}^{k_1} (x-b_i)\prod_{i=j}^{k_2} (x-c_j)^2$ and suppose that, as $n\to \infty$:
\begin{align}
&\kappa_r(F_n)\to k(r):=1_{[r=2]}a^2 +\sum_{i=1}^{k_1}2^{r-1}(r-1)!b_i^r \nonumber\\
&\rule{5mm}{0mm} +\sum_{j=1}^{k_2}2^{r-1}(r-1)!\left( c_j^r    +\frac{r c_j^{r-2} d_j^2}{4} \right),\quad \textrm{ for } 2\le r\le \textrm{deg}(P), \label{191016a}\\
&\sum_{r=1}^{\textrm{deg}(P)} \frac{P^{(r)}(0)}{r!2^{r-1}} \left( \Gamma_{r-1}(F_n) - \E[\Gamma_{r-1}(F_n)] \right)\stackrel{{L^1}}{\to} 0.\label{191016b}
\end{align}
If the following two conditions hold, as $n\to \infty$, for every $f\in H$ with $\Vert f \Vert_H=1$ and $k:=2k_2+k_1$:
\begin{align}
&\E\left[ \exp\left(i(t_1I_1(f)+t_2F_n)\right) \langle DI_1(f), DF_n\rangle_H \right]\to 0 ,\quad \textrm{ for every }(t_1, t_2)\in \mathbb{R}^2, \label{191016c}\\
& it_1 \E\Bigl[ \exp\left( i(it_1 I_1(f)+t_2F_n) \right)\,\langle DI_1(f) , \sum_{r=1}^{\textrm{deg}(P)} \frac{P^{(r)}(0)}{r!2^{r-1}} \, \sum_{\alpha+\beta=r-1 \atop \alpha\ge 1,\beta\ge 0}(it_2)^{k+1_{[a\ne 0]}-\alpha}\Bigr. \nonumber\\
&\rule{5mm}{0mm} \times \Bigl.\left( -DL^{-1}\Gamma_\beta (F_n) \right)\rangle_H \Bigr]\to 0 ,\quad \textrm{ for every }(t_1,t_2)\in \mathbb{R}^2, \label{191016d}
\end{align} 
then $F_n\stackrel{\textrm{st.}}{\to} X$, as $n\to \infty$, and  $X$ is independent of the underlying Brownian motion.
\end{theorem}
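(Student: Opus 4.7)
The plan is to rerun the proof of Theorem \ref{030416aa} at the level of the joint characteristic function $\psi_n(t_1,t_2):=\E[\exp(i(t_1 I_1(f)+t_2 F_n))]$, for an arbitrary $f\in H$ with $\Vert f\Vert_H=1$. By the characterisation of stable convergence recalled at the end of Section \ref{stableconvergence}, it suffices to show that $\psi_n(t_1,t_2)\to \varphi_{I_1(f)}(t_1)\varphi_X(t_2)$ pointwise, the product form of the limit encoding the required independence between $I_1(f)$ and the target. Assumption \eqref{191016a} together with hypercontractivity gives $\sup_n \E[F_n^r]<\infty$ for every $r\ge 1$, so $\{(I_1(f),F_n)\}_n$ is tight; fix $t_1\in\mathbb{R}$, pass to an arbitrary convergent subsequence, and let $h(t_2)$ denote the pointwise limit of $\psi_n(t_1,\cdot)$.

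The proof then mirrors Steps 1--5 of Theorem \ref{030416aa}. The only new ingredient is that
\begin{align*}
D\exp(i(t_1 I_1(f)+t_2 F_n))=i(t_1 f+t_2 DF_n)\exp(i(t_1 I_1(f)+t_2 F_n)),
\end{align*}
so each integration by parts that transfers $DF_n$ onto a Malliavin dual produces, besides the single-variable term, an extra $-it_1$-weighted cross-term $\E[\exp(i(t_1 I_1(f)+t_2 F_n))\langle f,-DL^{-1}\Gamma_{r-1}(F_n)\rangle_H]$. Iterating the recursion, forming the linear combination with weights $P^{(r)}(0)/(r!\,2^{r-1})$, reindexing $(\alpha,\beta)=(j,r-j-1)$ and multiplying by $(it_2)^{k+1_{[a\ne 0]}}$ exactly as in Step 3 of Theorem \ref{030416aa}, the totality of cross-terms matches precisely the quantity assumed to vanish in \eqref{191016d}. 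The standard error term is handled, as in Theorem \ref{030416aa}, by \eqref{191016b} and the bound $|\exp(i(t_1 I_1(f)+t_2 F_n))|=1$, while \eqref{191016a} identifies each $\E[\Gamma_{r-1}(F_n)]=\kappa_r(F_n)/(r-1)!$ with $\kappa_r(X)/(r-1)!$ in the limit. Passing to the limit in $n$, $h$ satisfies, for $t_2\ne 0$, the same polynomial differential equation as $\varphi_X$ in Lemma \ref{030416d}.

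Since $\psi_n(t_1,0)=\varphi_{I_1(f)}(t_1)$ identically in $n$, we have $h(0)=\varphi_{I_1(f)}(t_1)$. Because the ODE of Lemma \ref{030416d} is linear first-order, $t_2\mapsto \varphi_{I_1(f)}(t_1)\varphi_X(t_2)$ is another solution with the same initial datum, and the Picard--Lindel\"of uniqueness argument employed in Lemma \ref{030416d} forces $h(t_2)=\varphi_{I_1(f)}(t_1)\varphi_X(t_2)$. Condition \eqref{191016c} provides the complementary check at the singular endpoint $t_2=0$: combining the chaos-by-chaos identity $\E[\exp(it_1 I_1(f))I_l(g_{n,l})]=(it_1)^l\varphi_{I_1(f)}(t_1)\langle f^{\otimes l},g_{n,l}\rangle_{H^{\otimes l}}$ with the stochastic-Fubini representation $\langle f,DF_n\rangle_H=\sum_l l\,I_{l-1}(g_{n,l}\otimes_1 f)$ recasts \eqref{191016c} at $t_2=0$ as a polynomial identity in $t_1$ whose coefficients $l\langle f^{\otimes l},g_{n,l}\rangle_{H^{\otimes l}}$ must tend to $0$; this gives $\partial_{t_2}\psi_n(t_1,0)\to 0 = \varphi_{I_1(f)}(t_1)\varphi_X'(0)$, confirming the initial derivative. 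Since the subsequence was arbitrary, $\psi_n\to\varphi_{I_1(f)}\varphi_X$ pointwise and stable convergence follows.

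The hardest part will be the combinatorial bookkeeping of Step 3 of Theorem \ref{030416aa} in the presence of the cross-terms: one has to verify that the double sum produced by the iteration, after reindexing $(j,r-j-1)\to(\alpha,\beta)$ and rescaling by $(it_2)^{k+1_{[a\ne 0]}}$, coincides exactly with the expression in \eqref{191016d}, with the correct sign, the correct powers of $it_2$, and no stray contributions. A subsidiary technicality is to justify that the identity valid for $\psi_n$ away from $t_2=0$ passes to the limit and extends to $t_2=0$; this relies on the moment bounds inside a fixed sum of Wiener chaoses, hypercontractivity, and the explicit value at $t_2=0$ furnished by \eqref{191016c}.
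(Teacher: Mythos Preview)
Your proposal is correct and shares the paper's core strategy: rerun the integration-by-parts recursion of Theorem~\ref{030416aa} for the joint characteristic function $\psi_n(t_1,t_2)$, with the extra cross-terms produced by the $it_1 f$ part of $D\exp(i(t_1I_1(f)+t_2F_n))$ absorbed by hypothesis~\eqref{191016d}. The difference lies in the uniqueness argument. The paper derives a \emph{second} differential equation, in the $t_1$-direction, from the identity $\Gamma_1(I_1(f))=1$ together with hypothesis~\eqref{191016c}, obtaining $\partial_{t_1}\varphi^*+t_1\varphi^*=0$; it then shows that the ratio $\Psi=\varphi^*/(\varphi_{I_1(f)}\varphi_X)$ has vanishing gradient on $\mathbb{R}^2$ and hence equals $\Psi(0,0)=1$. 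Your route is leaner: you fix $t_1$, use only the $t_2$-equation, and invoke the automatic initial value $h(0)=\psi_n(t_1,0)=\varphi_{I_1(f)}(t_1)$ together with Picard--Lindel\"of. This is legitimate, since the coefficient $G_1(t_2)G_2(t_2)$ of $h'$ never vanishes for real $t_2$, so the first-order linear ODE is regular on all of $\mathbb{R}$ and the continuous solution is determined by its value at $0$. In fact this shows that your appeal to~\eqref{191016c} for an ``initial-derivative check'' is not strictly necessary here; in the paper, \eqref{191016c} plays a more structural role as the source of the $t_1$-equation (and the paper obtains $\partial_{t_2}\varphi^*(t_1,0)=0$ from~\eqref{191016d} at $t_2=0$, not from~\eqref{191016c}). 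One small clarification: when you pass to a convergent subsequence, it should be one along which the \emph{vector} $(I_1(f),F_n)$ converges in law, not merely $\psi_n(t_1,\cdot)$ for one fixed $t_1$; this ensures the limit $h$ is a bona fide characteristic function with finite moments, so that $\partial_{t_2}\psi_n\to h'$ is justified.
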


\begin{proof}
Theorem \ref{040416a}, Eq.~\eqref{191016a} and Eq.~\eqref{191016b} imply that $F_n\stackrel{\textrm{Law}}{\to} X$, as $n\to \infty$. Define:
\begin{align*}
B_0 &:= \E\left[ \exp\left( i(t_1I_1(f)+t_2F_n) \right) \langle DI_1(f), DF_n\rangle_H   \right],\\
A_{m}&:=\E\left[ \exp\left( i (t_1I_1(f) + t_2 F_n) \right) \langle DI_1(f), -DL^{-1}\Gamma_m(F_n)\rangle_H \right]1_{[m\ge 0]},\\
\varphi_n(t_1, t_2)&:=\E\left[  \exp\left( i (t_1I_1(f) + t_2 F_n) \right) \right].
\end{align*}
The obvious dependence  of $(t_1,t_2)$ is dropped in the first equalities. The proof is divided in three steps:
\begin{itemize}
\item we derive two equations involving derivatives of $\varphi_n$,
\item we prove that the sequence $\left\lbrace(I_1(f), F_n)^\top\right\rbrace_n$ is tight and that we have $(I_1(f), F_n)^\top \stackrel{\textrm{Law}}{\to}(I_1(f), X)^\top$, as $n\to \infty$, where $X$ is independent of $I_1(f)$,
\item we conclude that $F_n\stackrel{\textrm{st.}}{\to }X$, as $n\to \infty$, and  $X$ is independent of the underlying Brownian motion. 
\end{itemize}
\begin{enumerate}[(1)]
\item Consider  $(t_1, t_2)\in \mathbb{R}\times \mathbb{R}^*$. Then for $r\ge 2 $:
\begin{align*}
&\E\left[ \exp\left( i (t_1I_1(f) + t_2 F_n) \right) \Gamma_{r-1}(F_n)   \right]\\
&= \E\left[ \exp\left( i (t_1I_1(f) + t_2 F_n) \right) \langle DF_n, -DL^{-1}\Gamma_{r-2}(F_n)\rangle_H  \right]\\
&= \frac{1}{it_2}\E\left[ \exp\left( i (t_1I_1(f) + t_2 F_n) \right) \langle it_2 DF_n+it_1 DI_1(f), -DL^{-1}\Gamma_{r-2}(F_n)\rangle_H\right]\\
&\rule{5mm}{0mm}-\frac{it_1}{it_2} \E\left[ \exp\left( i (t_1I_1(f) + t_2 F_n) \right) \langle DI_1(f), -DL^{-1}\Gamma_{r-2}(F_n)\rangle_H \right]\\
&= \frac{1}{it_2}\E\left[ \exp\left( i (t_1I_1(f) + t_2 F_n) \right) \Gamma_{r-2}(F_n) \right] - \frac{1}{it_2} \varphi_n(t_1, t_2)\E[\Gamma_{r-2}(F_n)]\\
&\rule{5mm}{0mm}- \frac{it_1}{it_2}A_{r-2}.
\end{align*}
We have used the integration by parts rule and $-\delta DL^{-1}F=F-\E[F]$ for every $F\in L^2(\Omega)$. Repeating this first calculations  once again if $r-2\ge 1$, we find:
\begin{align*}
&\E\left[ \exp\left( i (t_1I_1(f) + t_2 F_n) \right) \Gamma_{r-1}(F_n)   \right]\\
&=\frac{1}{it_2}\left( 
\frac{1}{it_2}\E\left[ \exp\left( i (t_1I_1(f) + t_2 F_n) \right)\Gamma_{r-3}(F_n)  \right]-\frac{1}{it_2}\varphi_n(t_1, t_2)\E\left[ \Gamma_{r-3}(F_n) \right]\right.\\
&\rule{5mm}{0mm}-\left.\frac{it_1}{it_2}A_{r-3}\right)- \frac{1}{it_2}\varphi_n(t_1, t_2)\E[\Gamma_{r-2}(F_n)]-\frac{it_1}{it_2}A_{r-2},
\end{align*}
%
%
and iteration yields finally for $r\ge 1$:
\begin{align*}
&\E\left[ \exp\left( i (t_1I_1(f) + t_2 F_n) \right) \Gamma_{r-1}(F_n)   \right]\\
&= \frac{1}{(it_2)^{r-1}} \E\left[ \exp\left( i (t_1I_1(f) + t_2 F_n) \right) \Gamma_0(F_n) \right]\\
&\rule{5mm}{0mm}-  \varphi_n(t_1, t_2)\sum_{\alpha+\beta=r-1 \atop \alpha,\beta\ge 1} \frac{1}{(it_2)^\alpha} \E[\Gamma_\beta(F_n)]  - it_1 \sum_{\alpha+\beta=r-1 \atop \alpha\ge 1,\beta\ge 0} \frac{1}{(it_2)^\alpha} A_{\beta}.
\end{align*}
With $\frac{\partial \varphi_n}{\partial t_2} (t_1, t_2)=i\E\left[    \exp\left( i (t_1I_1(f) + t_2 F_n) \right) F_n \right]$, we find:
\begin{align*}
&\E\left[  \exp\left( i (t_1I_1(f) + t_2 F_n) \right) \Gamma_{r-1}(F_n) \right]\\
&= \frac{-i}{(it_2)^{r-1}}\frac{\partial \varphi_n}{\partial t_2} (t_1, t_2)-\varphi_n(t_1, t_2)\sum_{\alpha+\beta=r-1 \atop \alpha,\beta\ge 1} \frac{1}{(it_2)^\alpha}\E\left[ \Gamma_\beta(F_n) \right]\\
&\rule{5mm}{0mm}-it_1 \sum_{\alpha+\beta=r-1 \atop \alpha\ge 1 ,\beta\ge 0} \frac{1}{(i t_2)^\alpha} A_\beta.
\end{align*}
Hence, for all $(t_1, t_2)\in \mathbb{R}\times \mathbb{R}^*$:
\begin{align}
&\E\left[  \exp\left( i (t_1I_1(f) + t_2 F_n) \right) \sum_{r=1}^{\textrm{deg}(P)}\frac{P^{(r)}(0)}{r!2^{r-1}}\left( \Gamma_{r-1}(F_n) -\E\left[  \Gamma_{r-1}(F_n)\right] \right) \right]\nonumber\\
&=-\sum_{r=1}^{\textrm{deg}(P)} \frac{P^{(r)}(0)}{r!2^{r-1}}\frac{i}{(it_2)^{r-1}} \frac{\partial \varphi_n}{\partial t_2} (t_1, t_2)\nonumber \\
&\rule{5mm}{0mm}- \varphi_n(t_1, t_2) \sum_{r=1}^{\textrm{deg}(P)} \frac{P^{(r)}(0)}{r!2^{r-1}} \sum_{\alpha+\beta=r-1 \atop \alpha,\beta \ge 1 }\frac{1}{(it_2)^\alpha} \,\frac{\kappa_{\beta+1}(F_n)}{\beta!}\nonumber\\
&\rule{5mm}{0mm}- \varphi_n(t_1, t_2) \sum_{r=1}^{\textrm{deg}(P)}\frac{P^{(r)}(0)}{r!2^{r-1}}\frac{\kappa_{r}(F_n)}{(r-1)!}- it_1 \sum_{r=1}^{\textrm{deg}(P)}\frac{P^{(r)}(0)}{r!2^{r-1}} \sum_{\alpha+\beta=r-1 \atop \alpha\ge 1,\beta\ge 0} \frac{1}{(it_2)^\alpha} A_\beta .\label{191016ga}
\end{align}

With $\Gamma_1(I_1(f))-1=0$, we find for $(t_1, t_2)\in \mathbb{R}^*\times \mathbb{R}$:
\begin{align}
0&= \E\left[ \exp\left( i(t_1 I_1(f) + t_2F_n) \right) \,\left( \Gamma_1(I_1(f))-1 \right) \right]\nonumber\\
&=\E\left[\exp\left( i(t_1 I_1(f) + t_2F_n) \right) \langle DI_1(f),-DL^{-1}I_1(f)\rangle_H \right]- \varphi_n(t_1, t_2)\nonumber\\
&= \frac{1}{it_1}\E\left[ \exp\left( i(t_1 I_1(f) + t_2F_n) \right) \langle it_1DI_1(f)+it_2 DF_n,-DL^{-1}I_1(f)\rangle_H \right]\nonumber\\
&\rule{5mm}{0mm}-\frac{it_2}{it_1}\E\left[ \exp\left( i(t_1 I_1(f) + t_2F_n) \right) \langle DF_n,-DL^{-1}I_1(f)\rangle_H\right] -\varphi_{n}(t_1, t_2)\nonumber\\
&= \frac{1}{i^2t_1} \E\left[ \exp\left( i(t_1 I_1(f) + t_2F_n) \right)iI_1(f) \right]-\frac{it_2}{it_1}\E\left[\exp\left( i(t_1 I_1(f) + t_2F_n) \right)\right. \nonumber\\
&\rule{5mm}{0mm}\times \left.
\langle DF_n,DI_1(f)\rangle_H \right]
-\varphi_n(t_1, t_2)\nonumber\\
&= -\frac{1}{t_1}\left( \frac{\partial \varphi_n}{\partial t_1} (t_1, t_2)+t_1\varphi_n(t_1, t_2)\right) - \frac{it_2}{it_1} B_0. \label{191016gb}
\end{align}

\item We prove that the sequence $\left\lbrace(I_1(f), F_n)^\top\right\rbrace_n$ is tight. We have with the Markov inequality, for $K\to \infty$:
\begin{align*}
\p\left( I_1(f)^2+F_n^2\ge K \right) &\le K^{-1} \E[I_1(f)^2+ F_n^2]=K^{-1} \left( 1+\E[F_n^2] \right)\\
&\le K^{-1}\left( 1+\sup_n \E[F_n^2]\right)\to 0.
\end{align*}
Consider a subsequence   $\left\lbrace(I_1(f), F_{n_l})^\top\right\rbrace_l$ which converges in law to a random vector $V=(V_1, V_2)^\top$. If we have that $V$ is the same for every converging subsequence and $V \stackrel{\textrm{Law}}{=} (I_1(f), X)^\top $, where $I_1(f)$ and 
$X$ are independent, then we have that $\left\lbrace(I_1(f), F_n)^\top\right\rbrace_n$ converges in law to the same limit.
We have that $\varphi_{n_l}(t_1, t_2)= \E\left[ \exp\left(i (t_1 I_1(f)+t_2F_{n_l})\right) \right]\to 
\varphi_{(V_1, V_2)^\top}(t_1, t_2)=: \varphi^*(t_1, t_2)$, as $l\to \infty$. Since we have Eq.~\eqref{191016b} and Eq.~\eqref{191016d}, we have for $(t_1, t_2)\in \mathbb{R}\times \mathbb{R}^*$ and $l\to \infty$ with Eq.~\eqref{191016ga}:
\begin{align}
&-\sum_{r=1}^{\textrm{deg}(P)} \frac{P^{(r)}(0)}{r!2^{r-1}}\frac{i}{(it_2)^{r-1}} \frac{\partial \varphi^*}{\partial t_2} (t_1, t_2) \nonumber \\
&\rule{5mm}{0mm}- \varphi^*(t_1, t_2) \sum_{r=1}^{\textrm{deg}(P)} \frac{P^{(r)}(0)}{r!2^{r-1}} \sum_{\alpha+\beta=r-1 \atop \alpha,\beta \ge 1 }\frac{1}{(it_2)^\alpha} \frac{k(\beta+1)}{\beta!}\nonumber\\
&\rule{5mm}{0mm}- \varphi^*(t_1, t_2) \sum_{r=1}^{\textrm{deg}(P)}\frac{P^{(r)}(0)}{r!2^{r-1}}\frac{k(r)}{(r-1)!}=0.\label{191016gc}
\end{align}
Notice that, as $l\to \infty$, we have $\frac{\partial \varphi_{n_l}}{\partial t_2} \to \frac{\partial \varphi^*}{\partial t_2}$ by the continuous mapping theorem and $\kappa_{\beta+1}(F_n)/\beta!\to k(\beta+1)/\beta!$, as $n\to \infty$,  since Eq.~\eqref{191016a} holds. 
We have:
\begin{align*}
\frac{\partial \varphi^*}{\partial t_2}(t_1, t_2)=\lim_l\frac{\partial \varphi_{n_l}}{\partial t_2}(t_1, t_2)=\lim_l \E\left[ \exp\left( i(t_1I_1(f) + t_2F_{n_l}) \right)i F_{n_l} \right],
\end{align*}
hence:
\begin{align*}
&\frac{\partial \varphi^*}{\partial t_2}(t_1, 0)=\lim_l \E\left[ \exp\left( it_1I_1(f) \right)i F_{n_l} \right]\\
&=-
\lim_l \E\left[  \exp\left( it_1 I_1(f) \right) \langle t_1 DI_1(f) , -DL^{-1}F_{n_l} \rangle_p \right]
=0.
\end{align*}
To see this, apply  Eq.~\eqref{191016d}   with $t_2=0$.
We find similarly
$\frac{\partial \varphi^*}{\partial t_1}(0,t_2)=0$.
Multiplying Eq.~\eqref{191016gc} by $(it_2)^{k+1_{[a\ne 0]}}$, we find   the following   equation for $(t_1,t_2)\in \mathbb{R}^2$:
\begin{align}
&-\sum_{r=1}^{\textrm{deg}(P)} \frac{P^{(r)}(0)}{r!2^{r-1}}{i}{(it_2)^{k+1_{[a\ne 0]}-r+1}} \frac{\partial \varphi^*}{\partial t_2} (t_1, t_2) \nonumber\\
&\rule{5mm}{0mm}- \varphi^*(t_1, t_2) \sum_{r=1}^{\textrm{deg}(P)} \frac{P^{(r)}(0)}{r!2^{r-1}} \sum_{\alpha+\beta=r-1 \atop \alpha,\beta \ge 1 }{(it_2)^{k+1_{[a\ne 0]}-\alpha}} \frac{k(\beta+1)}{\beta!}\nonumber\\
&\rule{5mm}{0mm}- (it_2)^{k+1_{[a\ne 0]}}\varphi^*(t_1, t_2) \sum_{r=1}^{\textrm{deg}(P)}\frac{P^{(r)}(0)}{r!2^{r-1}}\frac{k(r)}{(r-1)!}=0, \label{191016gh}
\end{align}
with $\varphi^*(0, 0)=1$ and $\frac{\partial \varphi^*}{\partial t_1}(0, t_2)=\frac{\partial \varphi^*}{\partial t_2}(t_1, 0)=0$.
From Eq.~\eqref{191016gb},  {we find with $n$ replaced by $n_l$ and $l\to \infty$, for $(t_1, t_2)\in \mathbb{R}^*\times \mathbb{R}$:
\begin{align}
 \frac{\partial \varphi^*}{\partial t_1} (t_1, t_2)+t_1\varphi^*(t_1, t_2)=0.\label{191016ge}
\end{align}
We have used that condition \eqref{191016c} implies that $B_0$ converges to 0 in Eq.~\eqref{191016gb}.} Since $\frac{\partial \varphi^*}{\partial t_1}(0, t_2)=0$,
the differential equation \eqref{191016ge} holds for every $(t_1,   t_2)\in \mathbb{R}^2$.
We have thus a system of partial differential equations, given by Eq.~\eqref{191016gh} and Eq.~\eqref{191016ge}, with the conditions $\varphi^*(0, 0)=1$ and $\frac{\partial \varphi^*}{\partial t_1}(0, t_2)=\frac{\partial \varphi^*}{\partial t_2}(t_1, 0)=0$. 
We 
notice that the calculations in the proof of Theorem \ref{030416aa}, in particular Step 1, Eq.~\eqref{040416a} and Eq.~\eqref{030416c} show that the  function  $(t_1, t_2)\mapsto\varphi_X(t_2)$ satisfies Eq.~\eqref{191016gh} and we have $\varphi_X(0)=1$ as well as  $\varphi'_X(0)=0$. For a standard normal random variable $M$, the  function $(t_1, t_2)\mapsto \varphi_M(t_1)$ satisfies Eq.~\eqref{191016ge}  and we have $\varphi_M(0)=1$ as well as $\varphi'_M(0)=0$. A solution  of the system is   given by the function $(t_1, t_2)\mapsto \varphi_M(t_1)\varphi_X(t_2)$. Suppose that $\tilde{\varphi}$ is another solution of the system. Define $\Psi(t_1, t_2):=\tilde{\varphi}(t_1, t_2)/(\varphi_M (t_1)\varphi_X(t_2))$. 
Notice that $\varphi_M (t_1)\varphi_X(t_2)\ne 0 $ for $t_1, t_2\in \mathbb{R}$.
For $t_2\ne 0$, Eq.~\eqref{191016gh} yields an expression of the form $\frac{\partial \tilde{\varphi}}{\partial t_2}(t_1, t_2)= \tilde{\varphi}(t_1, t_2) \, Q(t_2)$, where $Q$ is a function which depends only on $t_2$. Since Eq.~\eqref{040416a} and Eq.~\eqref{030416c} hold, we have with the same function $Q$ the representation $\frac{\partial {\varphi_X} }{\partial t_2}(t_2)={\varphi_X}(t_2) \, Q(t_2)$. Hence:
\begin{align*}
\frac{\partial \Psi }{\partial t_2}(t_1, t_2)&= \frac{\tilde{\varphi}(t_1, t_2)Q(t_2)\varphi_M(t_1)\varphi_X(t_2)-\tilde{\varphi}(t_1, t_2)\varphi_X(t_2)Q(t_2)\varphi_M(t_1)}{\left[ \varphi_M(t_1)\varphi_X(t_2) \right]^2}=0.
\end{align*}
We find  $\frac{\partial \Psi }{\partial t_2}(t_1, 0)=0$ since $\frac{\partial\tilde{\varphi}}{\partial t_2}(t_1, 0)=\varphi'_X(0)=0$. A similar calculation shows that $
\frac{\partial \Psi }{\partial t_1}(t_1, t_2)=0$, hence $\nabla \Psi=0$ on $\mathbb{R}^2$.
We conclude that $\Psi$ is constant and since $\Psi(0, 0)=1$, we have that:
\[
\tilde{\varphi}(t_1, t_2)=\varphi_N(t_1) \varphi_X(t_2).
\]

\item We have finally that every subsequence which converges in law, has the same limit $ (I_1(f), X)^\top$ with $X$ independent of $I_1(f)$. Since the $\p$-completion of the $\sigma$-field generated by 
$\left\lbrace I_1(f) : f\in H , \Vert f\Vert_H=1 \right\rbrace$
is the $\sigma$-field $\mathcal{F}$ of the Brownian motion, an application of \cite[Lemma 2.3.]{2013arXiv1305.3899N}
 concludes the proof, see Section \ref{stableconvergence}.
\end{enumerate}
\end{proof}

The following Corollary \ref{081116fff} is a special case of the implication $(2)\Rightarrow (3)$ in \cite[Theorem 1.3]{rosinski2}.

\begin{corollary} \label{081116fff}
 Consider $p\ge 3$, $k_1, k_2\ge 0$   and $X$ as defined in Eq.~\eqref{261116aa}.
  Suppose that $\left\lbrace g_{n, p}\right\rbrace_n\subset H^{\odot p}$, 
define $P(x)=x^{1+1_{[a\ne 0]}}\prod_{i=1}^{k_1} (x-b_i)\prod_{j=1}^{k_2}(x-c_j)^2$ and suppose that, as $n\to \infty$:
%
\begin{align}
&\kappa_r(F_n)\to k(r):=1_{[r=2]}a^2 +\sum_{i=1}^{k_1}2^{r-1}(r-1)!b_i^r \nonumber\\
&\rule{5mm}{0mm} +\sum_{j=1}^{k_2}2^{r-1}(r-1)!\left( c_j^r    +\frac{r c_j^{r-2} d_j^2}{4} \right),\quad \textrm{ for } 2\le r\le \textrm{deg}(P), \nonumber\\
&\sum_{r=1}^{\textrm{deg}(P)} \frac{P^{(r)}(0)}{r!2^{r-1}} \left( \Gamma_{r-1}(F_n) - \E[\Gamma_{r-1}(F_n)] \right)\stackrel{{L^1}}{\to} 0.\nonumber
\end{align}
If the following condition holds, as $n\to \infty$, for every $f\in H$ with $\Vert f \Vert_H=1$:
\begin{align}
\Vert g_{n, p} \otimes_1 f \Vert_{H^{\otimes (p-1)}} \to 0,\label{061116dd}
\end{align} 
then $I_p(g_{n,p})\stackrel{\textrm{st.}}{\to} X$, as $n\to \infty$, and $X$ is independent of the underlying Brownian motion.
\end{corollary}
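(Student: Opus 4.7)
The plan is to apply Theorem \ref{191016ta} with $F_n := I_p(g_{n,p})$ and verify the two extra hypotheses \eqref{191016c} and \eqref{191016d}. Note first that the cumulant condition and the $L^1$-condition in the statement of the corollary are exactly \eqref{191016a}--\eqref{191016b}, and via Theorem \ref{230416a} they already deliver $F_n \stackrel{\textrm{Law}}{\to} X$.

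For \eqref{191016c}, I would compute $DF_n = p\,I_{p-1}(g_{n,p}(\cdot,\star))$, where $\star$ stands for the remaining $p-1$ coordinates. Hence $\langle DI_1(f),DF_n\rangle_H = p\,I_{p-1}(g_{n,p}\otimes_1 f)$, and by Cauchy--Schwarz combined with the isometry \eqref{260416x},
\[
\left|\E\left[\exp\bigl(i(t_1 I_1(f)+t_2 F_n)\bigr)\,\langle DI_1(f),DF_n\rangle_H\right]\right| \le p\sqrt{(p-1)!}\,\Vert g_{n,p}\otimes_1 f\Vert_{H^{\otimes(p-1)}},
\]
which tends to $0$ by \eqref{061116dd}.

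For \eqref{191016d}, I would invoke \cite[Proposition 2.1]{azmoodeh} (the representation already used in the proof of Theorem \ref{230416aa}) to express each $\Gamma_\beta(F_n)$ as a finite linear combination of multiple Wiener integrals whose kernels are iterated symmetric contractions of the form $(\cdots(g_{n,p}\tilde{\otimes}_{r_1}g_{n,p})\tilde{\otimes}_{r_2}g_{n,p}\cdots)\tilde{\otimes}_{r_\beta}g_{n,p}$. Using the identity $-DL^{-1}I_q(\psi)=I_{q-1}(\psi(\cdot,\star))$ on each Wiener chaos, every term in the inner product appearing in \eqref{191016d} takes the form $I_{q-1}(\psi_n\otimes_1 f)$, so Cauchy--Schwarz reduces the problem to proving $\Vert \psi_n\otimes_1 f\Vert\to 0$ for each such iterated contraction kernel.

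The main obstacle is precisely this last step. By relabeling the contraction variables, every $\psi_n\otimes_1 f$ can be rearranged so that one factor is $g_{n,p}\otimes_1 f$, while the remaining factors are iterated contractions of copies of $g_{n,p}$ whose $H^{\otimes k}$-norms are controlled by $\sup_n\Vert g_{n,p}\Vert_{H^{\otimes p}}<\infty$ via the inequalities gathered in Lemma \ref{070316a}. Then \eqref{061116dd} forces $\Vert\psi_n\otimes_1 f\Vert\to 0$, so all the terms in \eqref{191016d} are $o(1)$ uniformly in the relevant indices. Combined with Remark \ref{150316a} (which upgrades the $L^1$-convergence in \eqref{191016b} to the conditional $L^1$-convergence required by Theorem \ref{191016ta}), this completes the verification. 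As an alternative avoiding the combinatorial bookkeeping, one may bypass Theorem \ref{191016ta} entirely and appeal directly to \cite[Theorem 1.3]{rosinski2}, whose implication $(2)\Rightarrow(3)$ is applied to $I_p(g_{n,p})$ with hypothesis (2) being precisely \eqref{061116dd} and the in-law limit provided by Theorem \ref{230416a}.
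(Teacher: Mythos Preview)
Your approach is essentially the same as the paper's: verify hypotheses \eqref{191016c} and \eqref{191016d} of Theorem \ref{191016ta}, with \eqref{191016c} dispatched exactly as you say, and \eqref{191016d} handled by expanding $\Gamma_\beta(F_n)$ via \cite[Proposition 2.1]{azmoodeh} into iterated contractions, contracting once more with $f$, and bounding each resulting kernel norm by $\Vert g_{n,p}\otimes_1 f\Vert$ times a bounded factor. Two small points: (i) the ``upgrading'' via Remark \ref{150316a} is unnecessary, since Theorem \ref{191016ta} already assumes the unconditional $L^1$-convergence \eqref{191016b} directly; (ii) for the norm bound on the iterated-contraction kernels, the paper does not use Lemma \ref{070316a} but rather expands the symmetrization into explicit integral terms and applies \cite[Lemma 2.3]{rosinski1}, which is the cleaner tool here --- your ``relabeling'' intuition is correct, but Lemma \ref{070316a} alone does not quite furnish the needed product bound. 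Your alternative route through \cite[Theorem 1.3]{rosinski2} is also exactly what the paper flags just before stating the corollary.
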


\begin{proof}
We prove that Eq.~\eqref{191016c} and Eq.~\eqref{191016d} hold, Theorem \ref{191016ta} yields then the statement.
\begin{enumerate}[(1)]
\item We have, as $n\to \infty$:
\begin{align*}
\E\left[ \left| \langle DI_1(f), DI_p(g_{n,p})  \rangle_H\right|\right]^2 &\le \E\left[ 
\left| p I_{p-1}(g_{n,p} \otimes_1 f) \right|^2
\right] \\
&= p^2 (p-1)! \Vert g_{n,p}\otimes_1 f \Vert_{H^{\otimes (p-1)}}^2 \to 0.
\end{align*}
We have used the stochastic Fubini theorem for multiple Wiener integrals, and Eq.~\eqref{191016c} holds since $
\left|\exp\left( i(t_1 I_1(f) + t_2I_p(g_{n,p})) \right) \right|=1$.
\item To prove that Eq.~\eqref{191016d} holds, we notice that $\Gamma_b(I_p(g_{n,p}))$ has a representation as a finite linear combination of random variables of the following form with $R:=\sum_{i=1}^b r_i$:
\begin{align*}
I_{(b+1)p-2R} \left( 
\ldots ( g_{n, p} \tilde{\otimes}_{r_1} g_{n, p} ) \tilde{\otimes}_{r_2} g_{n, p}) \ldots ) \tilde{\otimes}_{r_b} g_{n, p}
 \right),
\end{align*}
see \citep[Proposition 2.1]{azmoodeh} for details and the set containing $(r_1,\ldots, r_b)$ in particular.  We can represent $\langle DI_1(f), -DL^{-1}\Gamma_b (I_p(g_{n,p}))\rangle_H$  as finite linear combination of terms having the following form:
\begin{align}
\langle f(\cdot) , I_{(b+1)p-2R-1} \left( 
\ldots ( g_{n, p} \tilde{\otimes}_{r_1} g_{n, p} )  \ldots ) \tilde{\otimes}_{r_b} g_{n, p}) (\, ,\cdot)
 \right) \rangle_H\nonumber \\
 = I_{(b+1)p-2R-1} \left(  \int_0^T (
\ldots ( g_{n, p} \tilde{\otimes}_{r_1} g_{n, p} )   \ldots ) \tilde{\otimes}_{r_b} g_{n, p}) (\, , t) f(t) dt
 \right). \label{061116v}
\end{align}
\begin{itemize}
\item Let $A:=(b+1)p-2(r_1+\ldots + r_b) >1$, then we can represent \[(\ldots(g_{n, p}\tilde{\otimes}_{r_1}g_{n, p}) \ldots ) \tilde{\otimes}_{r_b} g_{n, p} (x_1,\ldots, x_{A})\] as  finite linear combination of integrals of the following form:
%
%
\begin{align}
\int_{[0,T]^{R}}g_{n,p}(Y_{1,l}, X_{1,l}) \ldots  g_{n,p}(Y_{b+1,l}, X_{b+1,l})dY_{1,l}\,\ldots\,dY_{b+1,l}, \label{060516TT1}
\end{align}
where $l$ is a summation index, $Y_{i,l}$ and $X_{j,l}$ represent collections of variables such that:
\begin{itemize}
\item $ \cup_{i=1}^{b+1} Y_{i,l}  $ contains  $R$ elements  and every element is in exactly two of the (non-empty) sets $Y_{1,l},\ldots, Y_{b+1,l}$,
\item $\left(\cup_{i=1}^{b+1} Y_{i,l} \right) \cap \left\lbrace x_1,\ldots,  x_{A}\right\rbrace=\emptyset$,
\item $\dot\cup_{i=1}^{b+1} X_{i,l}= \left\lbrace x_1,\ldots, x_{A}\right\rbrace$ .
\end{itemize}
We have then with $x_m\in \left\lbrace x_1, \ldots, x_A\right\rbrace$ and \cite[Lemma 2.3]{rosinski1}, as $n\to \infty$:
\begin{align*}
&\int_{[0,T ]^{A-1}}\left( \int_{[0,T]^{R+1}} \prod_{i=1}^{b+1} g_{n,p}(Y_{i,l}, X_{i,l}) f(x_m)dY_{1,l}\,\ldots\,dY_{b+1,l}dx_m\right)^2\\
&\rule{5mm}{0mm} \times \frac{dx_1 \ldots dx_A}{dx_m} \le \Vert g_{n,p} {\otimes}_1 f \Vert_{H^{\otimes (p-1)}}^2 \, \Vert g_{n,p} \Vert_{H^{\otimes p}}^{2b} \to 0. 
\end{align*}
\item Let $A:=(b+1)p-2(r_1+\ldots + r_b)=1$, then we can represent \[ (\ldots(g_{n, p}\tilde{\otimes}_{r_1}g_{n, p})\tilde{\otimes}_{r_2}g_{n, p}) \ldots ) \tilde{\otimes}_{r_b} g_{n, p}(x_0)  \]  as linear combination of integrals as in Eq.~\eqref{060516TT1} and all but one set $X_{i, l}$ is empty. Assume without loss of generality that $X_{1, l}$ is the non-empty set, containing the integration variable, say $x_0$. Hence, with \cite[Lemma 2.3]{rosinski1}, as $n\to \infty$:
\begin{align*}
&\left|\int_{[0,T]^{R+1}}f(x_0)g_{n,p}(Y_{1, l}, x_0)g_{n, p}(Y_{2,  l}) \ldots  g_{n, p}(Y_{b+1, l})dY_{1, l}\,\ldots\,dY_{b+1, l}dx_0\right|^2 \\
&=\left|\int_{[0,T]^{R}}(g_{n,p}\otimes_1 f)(Y_{1, l})
\prod_{i=2}^{b+1}  g_{n, p}(Y_{i, l}) dY_{1, l}\,\ldots\,dY_{b+1, l}\right|^2 \\
&\le \Vert g_{n, p} \otimes_1 f \Vert_{H^{\otimes (p-1)}}^2 \, \Vert g_{n, p}\Vert^{2b}_{H^{\otimes p}}\to 0.
\end{align*}
\end{itemize}
Combining these results, we find with $(y_1+\ldots + y_q)^2\le q \left( y_1^2 + \ldots + y_q^2 \right)$ that:
\begin{align*}
\E\left[ 
I_{(b+1)p-2R-1} \left(  \int_0^T (
\ldots ( g_{n, p} \tilde{\otimes}_{r_1} g_{n, p} ) \tilde{\otimes}_{r_2} g_{n, p}) \ldots ) \tilde{\otimes}_{r_b} g_{n, p} (\, , t) f(t) dt
 \right) ^2 \right] 
\end{align*}
converges to 0, as $n\to \infty$, hence:
\begin{align*}
\E\left[ \langle  DI_1(f), -DL^{-1}\Gamma_b({I_p(g_{n,p})}) \rangle_H^2 \right]\to 0,
\end{align*}
since $\langle DI_1(f), -DL^{-1}\Gamma_b({I_p(g_{n,p})}) \rangle_H$ can be represented as linear combination of integrals as in Eq.~\eqref{061116v}. Eq.~\eqref{191016d} follows since $\left| \exp(i(t_1I_1(f)+t_2I_p(g_{n,p})))\right|=1$. 
\end{enumerate}
\end{proof}

\begin{remark}
Eq.~\eqref{061116dd} holds if $\Vert g_{n,p} \tilde{\otimes}_{p-1} g_{n,p}\Vert_{H\otimes H} \to 0 $,  as $n\to \infty$. This follows directly from Lemma \ref{070316a}.
\end{remark}

We have the following converse of Theorem \ref{191016ta}.

\begin{theorem} \label{191016tb}
With the notations of Theorem \ref{191016ta}, consider $p\ge 3$, $k_1, k_2\ge 0$   and $X$ as defined in Eq.~\eqref{261116aa}. Suppose that $\left\lbrace g_{n, l}\right\rbrace_n\subset H^{\odot l}$   
for $1\le l \le p$, and: 
\begin{align*}
F_n=\sum_{l=1}^p I_l(g_{n, l}),
\end{align*}
If condition \eqref{191016b} holds and
 $F_n\stackrel{\textrm{st.}}{\to} X$, as $n\to \infty$, where $X$ is independent of the underlying Brownian motion, then for every $f\in H$ with $\Vert f \Vert_H=1 $:
\begin{enumerate}[(1)]
\item Eq.~\eqref{191016c} and 
 Eq.~\eqref{191016d} hold,
 \item $\lim_n \left( \sum_{l=2}^{p-1} (l+1)! \langle g_{n,l-1} \tilde{\otimes}_{l-1} g_{n,l+1} , f\tilde{\otimes} f \rangle_{H\otimes H} + \E\left[ \langle DI_1(f) , DF_n \rangle_H^2  \right] \right)=0$.
\end{enumerate}
\end{theorem}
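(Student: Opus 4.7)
The plan is to read the identities \eqref{191016ga} and \eqref{191016gb}, derived in the proof of Theorem~\ref{191016ta}, in the reverse direction, exploiting that stable convergence with a limit independent of the Brownian motion is equivalent to the joint convergence $(I_1(f), F_n)^\top\stackrel{\textrm{Law}}{\to}(I_1(f),X)^\top$ with independent marginals for every $f\in H$ with $\|f\|_H=1$. Writing $M\sim\mathcal{N}(0,1)$, $\varphi_n(t_1,t_2):=\E[e^{i(t_1 I_1(f)+t_2F_n)}]$ and $\varphi^*:=\varphi_M\cdot\varphi_X$, the pointwise convergence $\varphi_n\to\varphi^*$ upgrades to convergence of moments (and hence of cumulants $\kappa_r(F_n)\to k(r)$) and of partial derivatives $\partial_{t_j}\varphi_n\to\partial_{t_j}\varphi^*$, because $\{F_n\}_n$ lives in a fixed finite sum of chaoses and hypercontractivity gives uniform bounds on $\E[|F_n|^\alpha]$ for every $\alpha\geq 1$.

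For \eqref{191016c}: identity \eqref{191016gb} rewrites, for $t_2\ne 0$, as
\[
B_0(t_1,t_2):=\E[e^{i(t_1 I_1(f)+t_2F_n)}\langle DI_1(f),DF_n\rangle_H]=-\tfrac{1}{t_2}\bigl(\partial_{t_1}\varphi_n(t_1,t_2)+t_1\varphi_n(t_1,t_2)\bigr).
\]
Since $\partial_{t_1}\varphi^*=-t_1\varphi^*$, the bracket vanishes in the limit, so $B_0(t_1,t_2)\to 0$ for $t_2\ne 0$. To extend to $t_2=0$ I would use that $\{B_0(t_1,\cdot)\}_n$ is equi-Lipschitz in $t_2$, with constant $\sup_n\E[F_n^2]^{1/2}\sup_n\E[\langle DI_1(f),DF_n\rangle_H^2]^{1/2}$, finite by Cauchy--Schwarz and hypercontractivity. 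For \eqref{191016d}, multiply \eqref{191016ga} by $(it_2)^{k+1_{[a\ne 0]}}$: its left-hand side tends to $0$ by hypothesis \eqref{191016b}, and the three terms that do not involve $it_1$ converge to $\varphi_M(t_1)$ times the expression appearing in \eqref{191016gh}, which is the ODE from Lemma~\ref{030416d} applied to $\varphi_X$ and therefore vanishes. Consequently the remaining term, which equals $-E_n(t_1,t_2)$ with $E_n$ the quantity inside \eqref{191016d}, tends to $0$ for $t_2\ne 0$; the same equi-Lipschitz argument covers $t_2=0$, the exponents $k+1_{[a\ne 0]}-\alpha$ being non-negative so that $E_n$ extends continuously there.

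For (2): the key observation is the algebraic identity
\[
\tfrac{1}{2}\operatorname{Cov}(I_1(f)^2,F_n^2)=\sum_{l=2}^{p-1}(l+1)!\,\langle g_{n,l-1}\tilde{\otimes}_{l-1}g_{n,l+1},f\tilde{\otimes}f\rangle_{H\otimes H}+\E[\langle DI_1(f),DF_n\rangle_H^2].
\]
It follows from $I_1(f)^2=I_2(f\otimes f)+1$, hence $\operatorname{Cov}(I_1(f)^2,F_n^2)=\E[I_2(f\otimes f)F_n^2]$, by expanding $F_n^2$ via the multiplication formula: by the isometry \eqref{260416x} only triples $(l,m,r)$ with $l+m-2r=2$ survive, giving either $l=m$ with $r=l-1$ (diagonal contributions, matching $\E[\langle DI_1(f),DF_n\rangle_H^2]$ through the identity $\|g_{n,l}\otimes_1 f\|_{H^{\otimes(l-1)}}^2=\langle g_{n,l}\tilde{\otimes}_{l-1}g_{n,l},f\otimes f\rangle$) or $|l-m|=2$ with $r=\min(l,m)$ (off-diagonal contributions; the two orderings $l=m+2$ and $m=l+2$ combine into the single sum with coefficient $(l+1)!$). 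Since $I_1(f)^2F_n^2$ is bounded in $L^2$ by hypercontractivity, joint weak convergence with independent marginals gives $\E[I_1(f)^2F_n^2]\to\E[I_1(f)^2]\E[X^2]$ and $\E[F_n^2]\to\E[X^2]$, whence $\operatorname{Cov}(I_1(f)^2,F_n^2)\to 0$ and (2) follows.

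The main obstacle I expect is the bookkeeping in the covariance identity for (2), in particular the precise derivation of the coefficient $(l+1)!$ after combining the two orderings and accounting for the symmetrisations produced by the multiplication formula. The treatment of the case $t_2=0$ in part (1), while essentially routine, must also be handled carefully to avoid the singularity in \eqref{191016gb}.
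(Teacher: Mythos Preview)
Your proposal is correct and follows essentially the same approach as the paper: for part~(1) you reverse the identities \eqref{191016ga} and \eqref{191016gb} using $\varphi_n\to\varphi_M\,\varphi_X$, and for part~(2) you show that the target expression equals $\tfrac12\operatorname{Cov}(I_1(f)^2,F_n^2)$ and then argue that this covariance tends to~$0$. Two small methodological differences are worth noting. In part~(1), you cover the boundary $t_2=0$ by an equi-Lipschitz argument, whereas the paper computes $\partial_{t_1}\varphi_n(0,t_2)$ and $\partial_{t_2}\varphi_n(t_1,0)$ directly to handle the excluded points; both work, and your argument has the advantage of being uniform. In part~(2), you obtain the covariance identity by expanding $F_n^2$ through the multiplication formula and picking out the second-chaos component, while the paper instead uses Malliavin integration by parts, writing $\E[F_n^2 I_1(f)^2]-\E[F_n^2]=2\E[F_n I_1(f)\langle DF_n,DI_1(f)\rangle_H]$ and then decomposing $F_n I_1(f)=\sum_l I_{l+1}(g_{n,l}\tilde\otimes f)+\langle DI_1(f),DF_n\rangle_H$; the two routes lead to the same formula and to the same coefficient $(l+1)!$, so the bookkeeping you flag as the main obstacle is genuine but not a conceptual difficulty.
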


\begin{proof}
\begin{enumerate}[(1)]
\item Consider an arbitrary element $f\in H$ with $\Vert f \Vert_H=1$.
The stable convergence and the independence property imply that
$(I_1(f), F_n)^\top \stackrel{\textrm{Law}}{\to}(I_1(f), X)^\top$, as $n\to \infty$, where $X$ is independent of $I_1(f)$. Hence:
\[ \lim_n \varphi_n(t_1, t_2)=
\lim_n \E[\exp(i(t_1I_1(f)+t_2F_n))]= \varphi_{I_1(f)}(t_1) \varphi_X(t_2),
\] 
and Eq.~\eqref{191016ga}  {together with Eq.~\eqref{030416f}} implies that Eq.~\eqref{191016d} holds for every $(t_1, t_2)\in\mathbb{R}\times \mathbb{R}^*$. With Eq.~\eqref{191016gb}, we have that Eq~\eqref{191016c} holds  {for every $t_1, t_2 \ne 0$}. To see that Eq.~\eqref{191016c} holds for $t_1=0, t_2 \ne 0$, calculate the derivative with respect to $t_1$:
\begin{align*}
\frac{\partial \varphi_n}{\partial t_1}(t_1, t_2)&=i \E\left[\exp\left( i(t_1I_1(f)+t_2F_n )\right)I_1(f)\right]
\end{align*}
hence:
\begin{align*}
\frac{\partial \varphi_n}{\partial t_1}(0, t_2)&=
i\E\left[ \exp(it_2F_n) I_1(f) \right]=i\E\left[ \exp(it_2F_n) (-\delta D L^{-1} I_1(f)) \right]\\
&=
i \E\left[  \exp\left( i t_2 F_n \right) \langle it_2DF_n , DI_1(f)\rangle_H \right].
\end{align*}
By the continuous mapping theorem, we have: \[\left( \exp(it_2F_n) , I_1(f) \right)^\top\stackrel{\textrm{Law}}{\to} \left( \exp(it_2X) , I_1(f) \right)^\top,\] as $n\to \infty$, and $X$ is independent of $I_1(f)$. Since  $\sup_n \E\left[ \left| \exp(it_2F_n)I_1(f) \right|^2 \right] = \sup_n \E[1 \, I_1(f)^2]=1 <\infty$, we have:
\[
\lim_n \E \left[ \exp(it_2 F_n) I_1(f) \right]= \varphi_X(t_2) \E[I_1(f)]=0,
\]
thus, as $n\to \infty$:
\begin{align*}
0&= i\varphi_X(t_2)\E[I_1(f)]=\lim_n \frac{\partial \varphi_n}{\partial t_1}(0, t_2)\\
&=-\lim_n t_2 \E\left[  \exp\left( i t_2 F_n  \right) \langle D F_n ,DI_1(f)\rangle_H \right].
\end{align*}
 {
We prove that Eq.~\eqref{191016c} holds for $t_2=0, t_1\ne 0$:
\begin{align*}
&\E[\exp(it_1 I_1(f))\, \langle DI_1(f), DF_n\rangle_H]\\
&= (it_1)^{-1}\E[ \langle it_1 f \exp(it_1 I_1(f)), DF_n\rangle_H ]\\
&=(it_1)^{-1} \E[\delta(it_1 f \exp(it_1 I_1(f))) \, F_n]\\
&= (it_1)^{-1} \E[(it_1 I_1(f) + t_1^2)\, \exp(it_1 I_1(f)) \, F_n]\to 0,
\end{align*}
as $n\to \infty$.We have used Eq.~\eqref{int_parts_0} in the last step, the convergence follows as above and Eq.~\eqref{191016c} holds for $t_2=0, t_1\ne 0$. In the case $t_1=t_2=0$, we find with the independence property and Eq.~\eqref{int_parts} that Eq.~\eqref{191016c} holds. 
}
We see similarly that:
\begin{align}
0&= \lim_n \frac{\partial \varphi_n}{\partial t_2}(t_1, 0)=-\lim_n t_1 \E\left[  \exp\left( i t_1 I_1(f ) \right) \langle DI_1(f) ,-DL^{-1} F_n\rangle_H \right]. \label{071116a}
\end{align}

On the other hand, Eq.~\eqref{191016d} yields for $t_2=0$ the following condition:
\begin{align}
 \frac{it_1P^{k+1+1_{[a\ne 0]}}(0)}{(k+1+1_{[a\ne 0]})! 2^{k+1_{[ a\ne 0]}}}\E\left[ 
\exp\left( it_1 I_1(f)\right)\, \langle DI_1(f), -DL^{-1}F_n\rangle_H 
\right] \to 0,\label{071116b}
\end{align}
as $n\to \infty$. Since $P^{k+1+1_{[\alpha_0\ne 0]}}(0)\ne 0$, the convergence in Eq.~\eqref{071116b} follows clearly from Eq.~\eqref{071116a}. We conclude that Eq.~\eqref{191016c} and \eqref{191016d} hold for every $(t_1, t_2) \in \mathbb{R}\times \mathbb{R}$.

\item Consider an arbitrary element $f\in H$ with $\Vert f\Vert_H^2=1$. 
Since $(I_1(f), F_n)^\top\stackrel{\textrm{Law}}{\to} (I_1(f), X)^\top$, as $n\to \infty$,  where $I_1(f)\sim \mathcal{N}(0,1)$ is independent of $X$, we have that  $(I_1(f)^2, F_n^2)^\top\stackrel{\textrm{Law}}{\to} (I_1(f)^2,  X^2)$, as $n\to \infty$, by the continuous mapping theorem, hence $I_1(f)^2 F_n^2\stackrel{\textrm{Law}}{\to}I_1(f)^2X^2$, as $n\to \infty$. We have that:
\begin{align}
\sup_n \E\left[ \left(  F_n^2 I_1(f)^2\right)^2 \right]<\infty. \label{071116c}
\end{align}
 {this follows from the hypercontractivity property and the Cauchy-Schwarz inequality. Hence:}
\begin{align}
0&=   \lim_n \left(  \E[F_n^2 I_1(f)^2]  - \E[F_n^2] \E[ I_1(f)^2] \right), \label{071116d} 
\end{align}
since $I_1(f)$ is independent of $X$. 
 With Eq.~\eqref{071116d}:
\begin{align}
0&= \lim_n \left(  \E[F_n^2 I_1(f)^2 ] - \E[F_n^2]\E[I_1(f)^2] \right)\nonumber\\
&= \lim_n \left( 
\E\left[ F_n^2 I_1(f) (-\delta D L^{-1} I_1(f)) \right] -\E[F_n^2]
\right)\nonumber\\
&= \lim_n \left( \E \left[ \langle D(F_n^2 I_1(f)) , -DL^{-1}I_1(f) \rangle_H  \right]-\E[F_n^2] \right)\nonumber\\
&= \lim_n \left( 
2\E \left[
F_n I_1(f) \langle DF_n , DI_1(f)\rangle_H\right] + \E \left[ F_n^2 \Vert DI_1(f)\Vert_H^2 \right] - \E\left[ F_n^2
\right]
\right)\nonumber\\
&= 2\lim_n  
\E \left[
F_n I_1(f) \langle DF_n , DI_1(f)\rangle_H\right], \label{261116ab}
 \end{align}
and with the stochastic Fubini theorem for multiple Wiener integrals:
\begin{align*}
\langle DI_1(f) , DF_n\rangle_H&= \sum_{l=1}^p l I_{l-1} (g_{n,l}\tilde{\otimes}_1 f),\\
F_n I_1(f)&= \sum_{l=1}^p I_{l+1}(g_{n,l}\tilde{\otimes}f) + \sum_{l=1}^p l I_{l-1}(g_{n,l}\tilde{\otimes}_1 f)\\
&=\sum_{l=1}^p I_{l+1}(g_{n,l}\tilde{\otimes}f) + \langle DI_1(f),  DF_n\rangle_H.
\end{align*}
Hence with Eq.~\eqref{261116ab}: {
\begin{align*}
0&= \lim_n \left( 
\E\left[ \sum_{l=1}^p I_{l+1}(g_{n, l}\tilde{\otimes }f)  \, \sum_{l=1}^{p}l I_{l-1} (g_{n, l}\tilde{\otimes}_1 f) \right]
+\E[\langle DI_1(f) , DF_n \rangle_H^2]
 \right) \\
 &= \lim_n \left( 
\E\left[ \sum_{l=2}^{p+1} I_{l}(g_{n, l-1}\tilde{\otimes }f)  \, \sum_{l=0}^{p-1}(l+1) I_{l} (g_{n, l+1}\tilde{\otimes}_1 f) \right]\right.\\
&\rule{5mm}{0mm}\left.+\E[\langle DI_1(f) , DF_n \rangle_H^2]
 \right) \\
&= \lim_n \left( 
\sum_{l=2}^{p-1}(l+1)!\langle g_{n, l-1} \tilde{\otimes} f , g_{n, l+1}\tilde{\otimes}_1 f\rangle_{H^{\otimes l}} + \E[\langle DI_1(f) , DF_n \rangle_H^2]
 \right) \\
&= \lim_n \left( 
\sum_{l=2}^{p-1}(l+1)!\langle g_{n, l-1} \tilde{\otimes}_{l-1} g_{n, l+1}, f\tilde{\otimes} f\rangle_{H\otimes H} + \E[\langle DI_1(f) , DF_n \rangle_H^2]
 \right).
\end{align*}}
The last equality can be checked directly using the definition of contractions.
\end{enumerate}
\end{proof}

\begin{corollary}
Let the notations and assumptions of Theorem \ref{191016tb} prevail and suppose that $F_n=\sum_{l=1}^p I_l(g_{n, l})\stackrel{\textrm{st.}}{\to} X$, as $n\to \infty$, where  $X$ is independent of the underlying Brownian motion. Consider $f\in H$ arbitrary.
\begin{enumerate}
\item If $F_n=I_p(g_{n, p})$, then we have  $\lim_n \Vert g_{n,p}\tilde{\otimes}_1 f \Vert_{H^{\otimes (p-1)}}= 0$.
\item If
$\lim_n \Vert g_{n, l+1}\tilde{\otimes}_{l-1} g_{n, l-1}\Vert_{H\otimes H}= 0$, 
for every $2\le l\le p-1$,
then  we have $
\lim_n \Vert g_{n,l} \tilde{\otimes}_1 f\Vert_{H^{\otimes (l-1)}}= 0 
$, for every $1\le l \le p$.
\item If $F_n=I_{p-1}(g_{n, p-1})+ I_p(g_{n, p})$, then we have $\lim_n \Vert g_{n, p-1}\tilde{\otimes}_1 f \Vert_{H^{\otimes (p-2)}}=\lim_n \Vert g_{n, p}\tilde{\otimes}_1 f \Vert_{H^{\otimes (p-1)}}=0$.
\end{enumerate} 
\end{corollary}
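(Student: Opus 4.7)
The plan is to combine part~(2) of Theorem~\ref{191016tb} with an orthogonality argument in Wiener chaos. First I would use $DI_1(f) = f$ and $D_x I_l(g_{n,l}) = l\, I_{l-1}(g_{n,l}(\cdot,x))$ together with the stochastic Fubini theorem to write
\[
\langle DI_1(f), DF_n\rangle_H = \sum_{l=1}^{p} l\, I_{l-1}\bigl(g_{n,l}\tilde{\otimes}_1 f\bigr) ,
\]
and since these summands live in pairwise distinct Wiener chaoses, the isometry formula yields the decomposition
\[
\E\bigl[\langle DI_1(f), DF_n\rangle_H^2\bigr] = \sum_{l=1}^{p} l^{2}(l-1)!\, \bigl\|g_{n,l}\tilde{\otimes}_1 f\bigr\|_{H^{\otimes(l-1)}}^2
\]
as a sum of non-negative terms (with the $l=1$ summand interpreted as $\langle g_{n,1}, f\rangle_H^2$). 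By homogeneity in $f$, both summands in Theorem~\ref{191016tb}(2) scale as $\|f\|_H^2$, so the general case of the corollary follows from the case $\|f\|_H=1$ by rescaling; the case $f=0$ is trivial.

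Consequently the proof of each part reduces to showing that
\[
S_n(f) := \sum_{l=2}^{p-1}(l+1)!\, \langle g_{n,l-1}\tilde{\otimes}_{l-1} g_{n,l+1},\, f\tilde{\otimes} f\rangle_{H\otimes H}
\]
tends to $0$: each non-negative term in the decomposition above must then vanish separately, giving the claimed contraction limits. For parts~(1) and~(3) the vanishing of $S_n(f)$ is immediate from the structure of $F_n$. When $F_n = I_p(g_{n,p})$, one has $g_{n,m}=0$ for $m\ne p$; since $l\in\{2,\ldots,p-1\}$ the indices $l-1$ and $l+1$ cannot both equal $p$, so each summand of $S_n(f)$ contains a vanishing factor. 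When $F_n = I_{p-1}(g_{n,p-1}) + I_p(g_{n,p})$, a direct enumeration of the index pairs $(l-1, l+1)$ for $l\in\{2,\ldots,p-1\}$ shows they can never both lie in $\{p-1,p\}$; again $S_n(f)\equiv 0$.

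For part~(2) I would apply Cauchy--Schwarz in $H\otimes H$ together with Lemma~\ref{070316a} and the symmetry $\|g_{n,l-1}\otimes_{l-1} g_{n,l+1}\|_{H\otimes H} = \|g_{n,l+1}\otimes_{l-1} g_{n,l-1}\|_{H\otimes H}$ to obtain
\[
\bigl|\langle g_{n,l-1}\tilde{\otimes}_{l-1} g_{n,l+1},\, f\tilde{\otimes} f\rangle_{H\otimes H}\bigr| \le \bigl\|g_{n,l+1}\otimes_{l-1} g_{n,l-1}\bigr\|_{H\otimes H}\, \|f\|_H^2 ,
\]
so the hypothesis of~(2) forces $S_n(f)\to 0$. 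The main obstacle will be the bookkeeping of index conventions: aligning the direction of the contraction in the hypothesis of~(2) with the one arising from Theorem~\ref{191016tb}(2), and carefully enumerating the admissible index pairs $(l-1,l+1)$ in parts~(1) and~(3).
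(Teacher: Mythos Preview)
Your proof is correct and is precisely the argument the paper leaves implicit (the paper states the corollary without proof, as an immediate consequence of Theorem~\ref{191016tb}(2)). The decomposition $\E[\langle DI_1(f),DF_n\rangle_H^2]=\sum_{l=1}^p l^2(l-1)!\|g_{n,l}\tilde{\otimes}_1 f\|^2$ together with the vanishing of the cross-term $S_n(f)$ in each of the three cases is exactly the intended route; your observation that $g_{n,l+1}\otimes_{l-1}g_{n,l-1}$ is already symmetric in its two remaining variables (so the symmetrized and unsymmetrized contractions agree) cleanly handles the only potentially delicate point in part~(2).
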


We can recover \cite[Proposition 4.2. and Remark 4.3.]{nourdin2009b}.

\begin{proposition}
Consider $p\ge 4$ even and  a sequence $\left\lbrace g_{n, p}\right\rbrace_n \subset H^{\otimes p}$ such that 
 $I_p(g_{n, p})\stackrel{\textrm{Law}}{\to}  N^2-1$, as $n\to \infty$, where $N$ is a standard normal variable. Then $I_p(g_{n, p})\stackrel{\textrm{st.}}{\to} N^2-1$, as $n\to \infty$, and $N^2-1$ is independent of the underlying Brownian motion.
\end{proposition}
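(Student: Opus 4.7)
The strategy is to reduce the stable convergence claim to the general criterion in Theorem \ref{190916T1}: once we have verified that the single missing hypothesis $\Vert g_{n,p}\otimes_{p-1}g_{n,p}\Vert_{H\otimes H}\to 0$ holds, that theorem directly gives stable convergence to $N^2-1$ with $N^2-1$ independent of the Brownian motion. Up to replacing $g_{n,p}$ by its symmetrization (which does not affect $I_p(g_{n,p})$), we may assume $g_{n,p}\in H^{\odot p}$.

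First I would establish the auxiliary cumulant convergence $\kappa_2(I_p(g_{n,p}))\to 2$. Since $\sup_n\Vert g_{n,p}\Vert_{H^{\otimes p}}<\infty$, the hypercontractivity property of the $p$-th Wiener chaos (used repeatedly in the paper, e.g.\ in the proof of Theorem \ref{030416aa}) yields $\sup_n\mathbb{E}[|I_p(g_{n,p})|^q]<\infty$ for every $q\ge 1$. Combined with the assumed convergence in law to $N^2-1$, this gives uniform integrability of $I_p(g_{n,p})^2$ and hence $\mathbb{E}[I_p(g_{n,p})^2]\to\mathbb{E}[(N^2-1)^2]=2$. Since $\mathbb{E}[I_p(g_{n,p})]=0$, we conclude $\kappa_2(I_p(g_{n,p}))\to 2=2k_1$ with $k_1=1$.

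Next I would invoke Theorem \ref{240416c} applied in the special case $k_1=1$, $a=k_2=0$, $b_1=1$. The hypothesis of that theorem (convergence in law to $X=N^2-1$ and $\kappa_2(F_n)\to 2$) is now in place, so we may use the equivalent statement (b) to deduce that $\Vert g_{n,p}\tilde\otimes_l g_{n,p}\Vert_{H^{\otimes(2p-2l)}}\to 0$ for every $1\le l\le p-1$ with $l\ne p/2$. Since $p\ge 4$ is even, the choice $l=p-1$ satisfies $p-1\ne p/2$, so in particular
\begin{align*}
\Vert g_{n,p}\tilde\otimes_{p-1} g_{n,p}\Vert_{H\otimes H}\to 0.
\end{align*}
Invoking the equivalence between symmetrized and non-symmetrized contractions for $p\ge 4$ even and $l\ne p/2$ (Proposition 3.1 of \cite{nourdin2009b}, already used in Remark (1) of the proof of Theorem \ref{230416aa}), this upgrades to $\Vert g_{n,p}\otimes_{p-1} g_{n,p}\Vert_{H\otimes H}\to 0$.

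Finally, I would apply Theorem \ref{190916T1} with the target $X=N^2-1$ (i.e.\ $a=k_2=0$, $k_1=1$, $b_1=1$): the sequence $\{g_{n,p}\}_n\subset H^{\odot p}$ has $\sup_n\Vert g_{n,p}\Vert_{H^{\otimes p}}<\infty$, satisfies condition \eqref{190616a} by the previous step, and $I_p(g_{n,p})\stackrel{\textrm{Law}}{\to}N^2-1$ by assumption. The conclusion of that theorem is exactly $I_p(g_{n,p})\stackrel{\textrm{st.}}{\to}N^2-1$ with $N^2-1$ independent of the underlying Brownian motion. The only nontrivial step is the passage from convergence in law to the single contraction bound $\Vert g_{n,p}\otimes_{p-1} g_{n,p}\Vert_{H\otimes H}\to 0$; this is where the assumption that $p$ is even is used (so that $p-1\ne p/2$ can be picked), and it is the main place where Theorem \ref{240416c} does the heavy lifting.
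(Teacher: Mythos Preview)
Your proof is correct and slightly more economical than the paper's own argument. Both routes begin by extracting from the convergence in law $I_p(g_{n,p})\stackrel{\textrm{Law}}{\to}N^2-1$ the contraction bound $\Vert g_{n,p}\otimes_{p-1}g_{n,p}\Vert_{H\otimes H}\to 0$; the paper cites this directly from \cite[Theorem 1.2]{nourdin2009b}, while you route it through Theorem \ref{240416c} (whose a.\,$\Leftrightarrow$\,b.\ part is exactly that result). The genuine divergence is in the stable-convergence criterion invoked afterwards: the paper applies the more general Theorem \ref{191016ta}, which requires separately verifying the $L^2$-convergence of the $\Gamma$-combination \eqref{191016b} (via \cite[Theorem 1.2]{nourdin2009b} again) and then checking conditions \eqref{191016c}--\eqref{191016d}; you instead apply Theorem \ref{190916T1}, whose single hypothesis \eqref{190616a} is already in hand. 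Your route is shorter and avoids any mention of $\Gamma$-operators, at the cost of relying on the Rosi\'nski--based Theorem \ref{190916T1} rather than illustrating the paper's own machinery; the paper's choice is presumably deliberate, since the proposition is framed as an application recovering \cite[Proposition 4.2]{nourdin2009b} via the new $\Gamma$-criterion.
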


\begin{proof}
\cite[Theorem 1.2.]{nourdin2009b} implies with $P(x)=x(x-1)$, as $n\to \infty$:
\begin{align}
&\sum_{r=1}^2 \frac{P^{(r)}(0)}{r!2^{r-1}}\left(  \Gamma_{r-1}(I_{g_{n, p}})- \E[\Gamma_{r-1}(I_{g_{n, p}})]  \right)\nonumber\\
 &= I_p(g_{n, p}) - \frac{1}{2} \Gamma_1(I_p(g_{n, p}))+ \frac{1}{2}\E[\Gamma_1(I_p(g_{n, p}))]\nonumber\\
&= I_p(g_{n, p}) - \frac{1}{2} \Gamma_1(I_p(g_{n, p}))+ \frac{1}{2}\E[I_p(g_{n, p})^2]\stackrel{{L^2}}{\to} 0. \nonumber 
\end{align}
Moreover the convergence in law implies that $\Vert g_{n, p}{\otimes}_{p-1}g_{n, p}\Vert_{H\otimes H}\to 0$, as $n\to \infty$.
Notice that in condition \eqref{191016d} in Theorem \ref{191016ta} we must have $(\alpha, \beta, r)=(1, 0, 2)$ and condition \eqref{191016d}  { is then satisfied if condition \eqref{191016c} holds}. 
We check now condition \eqref{191016c}. 
\begin{align*}
&\left|
\E\left[ \exp\left( i(t_1I_1(f)+t_2I_p(g_{n, p})) \right)\langle DI_1(f), DI_p(g_{n, p})\rangle_H \right]\right|\\
&\le \E\left[ \langle DI_1(f), DI_p(g_{n, p})\rangle_H^2 \right]^{1/2}\\
&= \E[p^2 I_{p-1}(g_{n, p}\otimes_1 f )^2] = p^2 (p-1)! \Vert g_{n, p}\otimes_1 f\Vert_{H^{\otimes (p-1)}}^2\\ &\le  p^2 (p-1)! \Vert g_{n, p}{\otimes }_{p-1}g_{n, p}\Vert_{H^{\otimes 2}} \Vert f\Vert_H^2\to 0 ,
\end{align*}
as $n\to \infty$.
\end{proof}

\section{Appendix}

\begin{definition} \label{040316d}
\begin{enumerate}[(1)]
\item Consider $k_1\ge 0$. If $k_1>0$, we suppose that $b_j\ne 0$ for every $j=1,\ldots, k_1$.  {We set}:
\[
T_j = \begin{cases}  1 & \textnormal{\textit{for }} j=0,\\ \displaystyle{\sum_{1\le i_1 < \ldots < i_j\le k_1} b_{i_1}}\times \ldots \times b_{i_j}&\textnormal{\textit{for} } 1 \le j \le k_1,  \\
0&\textnormal{\textit{for} } j\notin \left\lbrace 0 , 1 , \ldots, k_1\right\rbrace , 
\end{cases}
\]
and  { for $l=1,\ldots, k_1$}:
\[
T_j^{(l)} = \begin{cases} 1 & \textnormal{\textit{for }} j=0,\\ \displaystyle{\sum_{1\le i_1 < \ldots < i_j\le k_1 \atop i_1, \ldots, i_j \ne l} b_{i_1}\times \ldots \times b_{i_j}}&\textnormal{\textit{for} } 1\le j \le  k_1-1  ,\\
0&\textnormal{\textit{for} } j\notin \left\lbrace 0 , 1 ,\ldots, k_1-1\right\rbrace .
\end{cases}
\]
If $k_1=0$, we set $b_j=0$ for every $j$, $T_0=1$ and $T_j=T_j^{(l)}=0$ for all other values of $j$ and $l$. 

\item Consider $k_2\ge 0$. If $k_2>0$, we suppose that $c_j d_j\ne 0$ for every $j=1, \ldots, k_2$.   {We set}:
\[
S_j = \begin{cases}  1 &\textnormal{\textit{for} }j=0, \\ \displaystyle{\sum_{1\le i_1 < \ldots < i_j\le k_2} c_{i_1}}\times \ldots \times c_{i_j}&\textnormal{\textit{for} } 1\le j \le k_2, \\
0&\textnormal{\textit{for} } j\notin \left\lbrace 0 , 1 , \ldots, k_2\right\rbrace  ;
\end{cases}
\]
and  { for $l=1, \ldots, k_2$}:
\[
S_j^{(l)} = \begin{cases} 1 &\textnormal{\textit{for }} j=0, \\ \displaystyle{\sum_{1\le i_1 < \ldots < i_j\le k_2 \atop i_1, \ldots, i_j \ne l} c_{i_1}\times \ldots \times c_{i_j}}&\textnormal{\textit{for} } 1 \le j \le k_2-1  ,\\
0&\textnormal{\textit{for} } j\notin \left\lbrace 0 , 1 , \ldots, k_2-1\right\rbrace .
\end{cases}
\]
If $k_2=0$, we set $c_j=d_j=0$ for every $j$, $S_0=1$ and $S_j=S_j^{(l)}=0$ for all other values of $j$ and $l$.
\end{enumerate}
\end{definition}

 \newpage

\begin{proposition}\label{291216d}
\textbf{Proof of Eq.~\eqref{040416dd}. }\end{proposition}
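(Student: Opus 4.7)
The plan is to reduce Eq.~\eqref{040416dd} to a polynomial identity in $x$ and verify it by matching coefficients. Since $\varphi_Y(0)=1$, the factor $i\varphi_Y$ does not vanish on a neighbourhood of the origin; after cancelling it, both sides become genuine polynomials in $x$ of degree at most $k+1$, where $k=2k_2+k_1$. The resulting polynomial identity, once established on a neighbourhood of $0$, then extends to all of $\mathbb{R}$, and it will suffice to compare the coefficient of $x^{s}$ on the two sides for each $s=1,\dots,k+1$ (the constant term vanishes on both sides).

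I would then introduce $Q(z):=P(z)/z^{2}=\prod_{j=1}^{k_1}(z-b_j)\prod_{j=1}^{k_2}(z-c_j)^{2}$ and set $q_{r}:=P^{(r+2)}(0)/(r+2)!=[z^{r}]Q(z)$ for $r=0,\dots,k$. A direct computation of the reversed polynomial yields the key identity
\begin{equation*}
G_1(x)G_2(x)=(2ix)^{k}\,Q\!\left(\tfrac{1}{2ix}\right)=\sum_{r=0}^{k}q_{r}(2ix)^{k-r},
\end{equation*}
together with the analogous reversed-polynomial expansions for $\prod_{j\ne l}(1-2ixb_j)=G_1(x)/(1-2ixb_l)$ and $\prod_{j\ne l}(1-2ixc_j)^{2}=G_2(x)/(1-2ixc_l)^{2}$. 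Inserting the explicit form of $\kappa_{r-l+2}(X)$ from Lemma~\ref{140616g} into the left-hand side of \eqref{040416dd} decomposes it into four blocks, corresponding to the four pieces $a^{2}\mathbf{1}_{[r-l+2=2]}$, $2^{r-l+1}(r-l+1)!\,b_j^{r-l+2}$, $2^{r-l+1}(r-l+1)!\,c_j^{r-l+2}$ and $2^{r-l-1}(r-l+2)!\,c_j^{r-l}d_j^{2}$ making up the cumulant.

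The first block I would treat is the $a^{2}$-block: the elementary rewriting $(2ix)^{k+1}(ix)^{-l}/2^{l+1}=ix\,(2ix)^{k-l}$, combined with the observation that the isolated constant term $(2ix)^{k+1}(-1)^{k_1}a^{2}/2\,\prod c_j^{2}\prod b_j$ is exactly $ix\,a^{2}q_{0}(2ix)^{k}$, makes the sum telescope to $ix\,a^{2}G_1(x)G_2(x)$, the first term on the right of \eqref{040416dd}. For the $b$-block, the root relation $Q(b_j)=0$ gives $\sum_{r=l}^{k}q_{r}b_j^{r-l+2}=-b_j^{2-l}\sum_{r=0}^{l-1}q_{r}b_j^{r}$; after summing over $l$ and using the reversed expansion of $G_1(x)/(1-2ixb_l)$, this block matches $2ix\,G_2(x)\sum_{l=1}^{k_1}b_l^{2}\prod_{j\ne l}(1-2ixb_j)$. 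The $c$- and $d^{2}$-blocks must be treated simultaneously, using both $Q(c_j)=0$ and $Q'(c_j)=0$: the simple-root identity at $c_j$ produces the contribution $-2ixc_l^{2}$, while the first-order data at the double zero $c_j$, combined with the factor $r-l+2$ coming from the $c_j^{r-l}d_j^{2}$ piece, generates the two remaining terms $ix\Delta_l(1-2ixc_l)+(ix)^{2}c_l\Delta_l$ on the right.

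The hard part will be the coordinated bookkeeping: powers of $2$ and $i$ must be tracked through the substitutions $y=2ix$, $u=2ixb_j$, $v=2ixc_j$, and the double-zero combinatorics at each $c_j$ requires the $d_j^{2}$-contribution on the left to be split correctly between the two $\Delta_l$-pieces on the right via $Q(c_j)=Q'(c_j)=0$. Once the four blocks are matched individually, summing them recovers equality of the coefficient of $x^{s}$ on each side for every $s=1,\dots,k+1$, and the polynomial identity, hence Eq.~\eqref{040416dd}, follows.
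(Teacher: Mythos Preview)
Your plan is sound and leads to a correct proof; the high-level strategy coincides with the paper's---divide by $i\varphi_Y(x)$, reduce to a polynomial identity in $x$ of degree at most $k+1$, and compare coefficients---but the algebraic packaging is different. The paper works throughout with the elementary symmetric sums $T_j,S_j$ and their reduced variants $T_j^{(l)},S_j^{(l)}$ (Definition~\ref{040316d}), using the recursions $T_j=T_j^{(l)}+b_lT_{j-1}^{(l)}$ and $S_j=S_j^{(l)}+c_lS_{j-1}^{(l)}$ to produce long telescoping cancellations term by term. Your route encodes exactly the same cancellations through the reversed-polynomial identity $G_1(x)G_2(x)=(2ix)^kQ(1/(2ix))$ and the root relations $Q(b_j)=0$, $Q(c_j)=Q'(c_j)=0$; for instance, your formula $q_r^{(j)}=\sum_{s>r}q_sb_j^{\,s-r-1}$ for the coefficients of $Q(z)/(z-b_j)$ is precisely what the paper obtains from $T_j=T_j^{(l)}+b_lT_{j-1}^{(l)}$. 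The $a^2$-block you handle is cleaner than in the paper (it collapses immediately to $ix\,a^2G_1G_2$), and the $b$-block also goes through with less bookkeeping. The genuinely laborious part in both approaches is the joint $c$- and $d^2$-block: the paper spends most of its effort here, and in your language the point is that the extra factor $r-l+2$ attached to the $c_j^{r-l}d_j^2$ piece is exactly what the derivative relation $Q'(c_j)=0$ absorbs---this is correct but still requires careful execution. Overall your approach is a tidy reformulation of the paper's computation rather than a fundamentally different argument.
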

\begin{proof}
We prove that Eq.~\eqref{040416dd} holds for $x\ne 0$. Suppose that $x\varphi_Y(x)\ne 0 $   and divide Eq.~\eqref{040416dd} by $i\varphi_Y(x)$:
\begin{align}
&\sum_{l=1}^{k} (2ix)^{k+1}   (ix)^{-l} \sum_{r=l}^{k} \frac{P^{(r+2)}(0)}{(r+2)!2^{r+1}}  \frac{\kappa_{r-l+2}(X)}{(r-l+1)!} + (2ix)^{k+1}\frac{ (-1)^{k_1}a^2}{2}\prod_{j=1}^{k_2} c_j^2\, \prod_{j=1}^{k_1} b_j \nonumber \\
&=  ix a^2 G_1(x)G_2(x) + 2ix G_2(x)  \sum_{l=1}^{k_1} b_l^2 \prod_{j\ne l}(1-2ixb_j)\nonumber\\
&\rule{5mm}{0mm}+   G_1(x) \sum_{l=1}^{k_2} \left(\prod_{j\ne l} (1-2ixc_j)^2 \right) \left[ ix \Delta_l (1-2ixc_l) + (ix)^2 c_l\Delta_l  - 2ix c_l^2 \right]. \label{040416d}
 \end{align}
We determine an alternative representation for $P(x)$ and calculate $P^{(l)}(0)/l!$. We have with $k=2k_2+k_1$
\begin{align*}
P(x)&= x^2 \prod_{j=1}^{k_1}(x-b_j) \prod_{j=1}^{k_2}(x-c_j)^2 = x^2 x^k \prod_{j=1}^{k_1}(1-b_j/x) \prod_{j=1}^{k_2}(1-c_j/x)^2\\
&=x^{k+2} \sum_{j_1=0}^{k_1}\,\sum_{j_2,j_3=0}^{k_2} (-1)^{j_1} T_{j_1} (-1)^{j_2} S_{j_2} (-1)^{j_3} S_{j_3}x^{-j_1-j_2-j_3}\\
&= x^{k+2} \sum_{p=0}^k x^{-p} (-1)^p \sum_{j_1+j_2+j_3=p} T_{j_1}S_{j_2}S_{j_3} \\
&= \sum_{l=2}^{k+2} x^l (-1)^{k+2-l} \left( \sum_{j_1+j_2+j_3=k+2-l} T_{j_1}S_{j_2} S_{j_3} \right).
\end{align*}
If for instance $k_1=0$, we have $T_j= 1_{[j=0]}$ and  the formula above holds also for the cases $k_1=0$, $k_2=0$ and $k_1=k_2=0$. Hence:
\begin{align*}
\frac{P^{(l)}(0)}{l!} = (-1)^{k+2-l} \sum_{j_1+j_2+j_3=k+2-l} T_{j_1}S_{j_2} S_{j_3}.
\end{align*}
We compare now the terms in $x$, on the left-hand side of Eq.~\eqref{040416d} we have:
\begin{align*}
2^{k+1}(ix)^{k+1-k} \frac{P^{(k+2)}(0)}{(k+2)!2^{k+1}} \kappa_2(X)=  ix \left(a^2+ 2\sum_{j=1}^{k_1}b_j^2 + \sum_{j=1}^{k_2}(2c_j^2+d_j^2) \right),
\end{align*}
and for the right-hand side of Eq.~\eqref{040416d}:
\begin{align*}
ixa^2 + 2ix \sum_{l=1}^{k_1}b_l^2 +  \sum_{l=1}^{k_2} [ix\Delta_l - 2ixc_l^2],
\end{align*}
the desired equality for the terms in $x$ follows.
The term in $x^{k+1}$ on the left-hand side of Eq.~\eqref{040416d} is 
$  2^k(ix)^{k+1} (-1)^{k_1}a^2 \prod_{j=1}^{k_2} c_j^2\prod_{j=1}^{k_1}b_j$,
 and on the right-hand side of Eq.~\eqref{040416d} we have:
\begin{align*}
 ix a^2 \prod_{j=1}^{k_2}(-2xic_j)^2 \prod_{j=1}^{k_1}(-2ixb_j) &= (ix)^{k+1}   (-2)^k a^2\prod_{j=1}^{k_2} c_j^2 \prod_{j=1}^{k_1}b_j\\
 &=(ix)^{k+1}   (-1)^{k_1} 2^k a^2\prod_{j=1}^{k_2} c_j^2 \prod_{j=1}^{k_1}b_j.
\end{align*}
We have for the term in $x^m$ and $1<m<k+1$ on the left-hand side of Eq.~\eqref{040416d}:
\begin{align} 
&2^{k+1} (ix)^{k+1-(k+1-m)}\left(
\frac{P^{(k+1-m+2)}(0)\kappa_2(X)}{(k+1-m+2)! 2^{k-m+2}}  + \ldots + \frac{P^{(k+2)}(0)}{(k+2)!2^{k+1}} \frac{\kappa_{m+1}(X)}{m!} \right)\nonumber\\
&=2^{k+1}(ix)^m \left[ (-1)^{m-1} \sum_{j_1+j_2+j_3\atop=m-1}T_{j_1}S_{j_2}S_{j_3} \, \frac{2^{m-1}}{2^{k+1}}\kappa_2(X) \right.\nonumber\\
&\rule{5mm}{0mm} \left.+ \ldots +  (-1)^0 \sum_{j_1+j_2+j_3\atop=0}T_{j_1}S_{j_2}S_{j_3} \, \frac{2^0}{2^{k+1}}\frac{\kappa_{m+1}(X)}{m!} \right]\nonumber\\
&=  (ix)^m \left[ (-2)^{m-1}\sum_{j_1+j_2+j_3\atop=m-1}T_{j_1}S_{j_2}S_{j_3} \, \left( a^2 + 2\sum_{j=1}^{k_2}c_j^2 +2\sum_{j=1}^{k_1}b_j^2 + \sum_{j=1}^{k_2}d_j^2 \right) \right.\nonumber\\
&\rule{5mm}{0mm}+ \ldots + \left.(-2)^{0}\sum_{j_1+j_2+j_3\atop=0}T_{j_1}S_{j_2}S_{j_3} \, \left(  2^m\sum_{j=1}^{k_2}c_j^{m+1} +2^m\sum_{j=1}^{k_1}b_j^{m+1} \right.\right.\nonumber \\
&\rule{5mm}{0mm}+ \left.\left. 2^{m-2}(m+1)\sum_{j=1}^{k_2}c_j^{m-1}d_j^2\right] \right) . \label{070416*}
\end{align}We use  the relations $S_j=S_j^{(l)}+ c_l S_{j-1}^{(l)}$  and $T_j=T_j^{(l)}+ b_l T_{j-1}^{(l)}$ if $k_1, k_2>0$,
and set $Z_j^{(l)}:= \sum_{j_2+j_3=j}S_{j_2}^{(l)} S_{j_3}^{(l)}$.
We consider the powers of $b_j$ in Eq.~\eqref{070416*} and suppose  that $k_1\ne 0$ for the next calculation.

\begin{align*}
&\sum_{l=1}^{k_1}\left[ (-2)^{m-1}  \left( \sum_{j_1+j_2+j_3\atop=m-1} T_{j_1}S_{j_2}S_{j_3} \right)2 b_l^2 + \ldots + (-2)^0 \left( \sum_{j_1+j_2+j_3\atop=0} T_{j_1}S_{j_2}S_{j_3}\right) 2^m b_l^{m+1 } \right]\\
&=2^m\sum_{l=1}^{k_1}\left[ (-1)^{m-1} \left( \sum_{j_1+j_2+j_3\atop=m-1} (T_{j_1}^{(l)} + b_l T_{j_1-1}^{(l)})S_{j_2}S_{j_3} \right) b_l^2 \right. \\
&\rule{5mm}{0mm} \left.  +  \ldots +   \left( \sum_{j_1+j_2+j_3\atop=0} (T_{j_1}^{(l)}+ b_l T_{j_1-1}^{(l)})S_{j_2}S_{j_3}\right)  b_l^{m+1 } \right] \\
&=2^m \sum_{l=1}^{k_1}\left[    (-1)^{m-1}\left( \sum_{j_1+j_2+j_3\atop=m-1}T_{j_1}^{(l)} S_{j_2}S_{j_3} \right)b_l^2 +   (-1)^{m-1}\left( \sum_{j_1+j_2+j_3\atop=m-2}T_{j_1}^{(l)} S_{j_2}S_{j_3} \right)b_l^3  \right.\\
&\rule{5mm}{0mm}+    (-1)^{m-2} \left( \sum_{j_1+j_2+j_3\atop=m-2}T_{j_1}^{(l)} S_{j_2}S_{j_3} \right)b_l^3 +   (-1)^{m-2} \left( \sum_{j_1+j_2+j_3\atop=m-3}T_{j_1}^{(l)} S_{j_2}S_{j_3} \right)b_l^4 \\
&\rule{5mm}{0mm}+ \left.\ldots +   (-1)^0 \left( \sum_{j_1+j_2+j_3\atop=0}T_{j_1}^{(l)} S_{j_2}S_{j_3} \right) b_l^{m+1} +   (-1)^0 \left( \sum_{j_1+j_2+j_3\atop=-1}T_{j_1}^{(l)} S_{j_2}S_{j_3} \right)b_l^{m+2} \right]\\
&=  2^m(-1)^{m-1} \sum_{l=1}^{k_1}\left[\left( \sum_{j_1+j_2+j_3\atop=m-1}T_{j_1}^{(l)} S_{j_2}S_{j_3} \right) b_l^2\right],
\end{align*}
hence:
\begin{align*}
&\sum_{l=1}^{k_1}\left[ (-2)^{m-1} \left( \sum_{j_1+j_2+j_3\atop =m-1} T_{j_1}S_{j_2}S_{j_3} \right)2 b_l^2 + \ldots +   \left( \sum_{j_1+j_2+j_3\atop=0} T_{j_1}S_{j_2}S_{j_3}\right) 2^m b_l^{m+1 } \right]\\
&=(-2)^{m-1} \sum_{l=1}^{k_1}\left[\left( \sum_{j_1+j_2+j_3=m-1}T_{j_1}^{(l)} S_{j_2}S_{j_3} \right) 2b_l^2\right],
\end{align*}
Notice that, with our settings, the previous equality holds also if $k_1=0$ or $k_2=0$
We consider now the powers of $c_j$ in Eq.~\eqref{070416*} and suppose that $k_2>0$ for the next calculation.
\begin{align}
&\sum_{j=1}^{k_2}   \left[ (-2)^{m-1} \left( \sum_{j_1+j_2+j_3\atop =m-1}T_{j_1}S_{j_2}S_{j_3}  \right) 2c_j^2 \right. \nonumber \\
&\rule{5mm}{0mm}+ \left. \ldots+ (-2)^0 \left( \sum_{j_1+j_2+j_3\atop=0}T_{j_1}S_{j_2}S_{j_3}\right) 2^m  c_j^{m+1} \right]\label{251116a}\\
&=2^m \sum_{j=1}^{k_2} \left[ 
(-1)^{m-1} \left( \sum_{j_1+j_2+j_3\atop =m-1}T_{j_1}S_{j_2}S_{j_3}\right) c_j^2 + \ldots +   \left(\sum_{j_1+j_2+j_3\atop =0}T_{j_1}S_{j_2}S_{j_3}\right) c_j^{m+1}
 \right]\nonumber\\  
 &= 2^m \sum_{j=1}^{k_2}  \sum_{j_1=0}^{k_1} T_{j_1} \sum_{r=0}^{m-1} (-1)^{m-1-r} \left( \sum_{j_2+j_3\atop =m-1-r-j_1 }S_{j_2} S_{j_3}\right)c_j^{r+2}\nonumber\\
 &= 2^m  \sum_{j=1}^{k_2} \sum_{j_1=0}^{k_1}T_{j_1} \sum_{r=0}^{m-1} (-1)^{m-1-r} \left[  \sum_{j_2+j_3\atop =m-1-r-j_1 }(S_{j_2}^{(j)}+c_j S_{j_2-1}^{(j)} )\,( S_{j_3}^{(j)}+c_j S_{j_3-1}^{(j)}) \right]c_j^{r+2}\nonumber\\
 &= 2^m \sum_{j=1}^{k_2}\sum_{j_1=0}^{k_1} T_{j_1}  \sum_{r=0}^{m-1} (-1)^{m-1-r}\left[   Z_{m-1-r-j_1}^{(j)} c_j^{r+2} + 2   Z_{m-2-r-j_1}^{(j)} c_j^{r+3}\right. \nonumber\\
 &\rule{5mm}{0mm} \left.+   Z_{m-3-r-j_1}^{(j)} c_j^{r+4}   \right]\nonumber\\
 &= 2^m \sum_{j=1}^{k_2} \sum_{j_1=0}^{k_1}T_{j_1}(-1)^{m-1} \left[   Z_{m-1-j_1}^{(j)} c_j^2 + 2   Z_{m-2-j_1}^{(j)} c_j^3 +  Z_{m-3-j_1}^{(j)}c_j^4 \right]\nonumber\\
&\rule{5mm}{0mm}+ 2^m \sum_{j=1}^{k_2}\sum_{j_1=0}^{k_1} T_{j_1}(-1)^{m-2}\left[  Z_{m-2-j_1}^{(j)} c_j^3 + 2   Z_{m-3-j_1}^{(j)} c_j^4 +   Z_{m-4-j_1}^{(j)}c_j^5 \right] \nonumber\\
&\rule{5mm}{0mm}+\ldots +2^m \sum_{j=1}^{k_2}\sum_{j_1=0}^{k_1} T_{j_1} (-1)^1\left[   Z_{1-j_1}^{(j)} c_j^{m} + 2   Z_{-j_1}^{(j)} c_j^{m+1} +   Z_{-1-j_1}^{(j)}c_j^{m+2} \right] \nonumber\\
&\rule{5mm}{0mm}+ 2^m \sum_{j=1}^{k_2} c_j^{m+1} .\nonumber
\end{align}
Hence the expression in Eq.~\eqref{251116a} equals:
\begin{align*}
&2^m (-1)^{m-1} \sum_{j=1}^{k_2}\left( \sum_{j_1+j_2+j_3\atop=m-1}T_{j_1}S_{j_2}^{(j)}S_{j_3}^{(j)} \right) c_j^2+ 2^m \sum_{j=1}^{k_2}  \left( \sum_{j_1+j_2+j_3\atop=m-2}T_{j_1}S_{j_2}^{(j)}S_{j_3}^{(j)} \right) \\
&\rule{5mm}{0mm}\times c_j^3 \left[ 2(-1)^{m-1} + (-1)^{m-2} \right] + 2^m \sum_{r=2}^{m-1}\sum_{j=1}^{k_2}\left( \sum_{j_1+j_2+j_3\atop=m-1-r}T_{j_1}S_{j_2}^{(j)}S_{j_3}^{(j)} \right)\\
&\rule{5mm}{0mm}\times\left[ (-1)^{m-1-r} + 2(-1)^{m-1-(r-1)}  +(-1)^{m+1-r} \right]c_j^{r+2}\\
&=(-2)^{m-1}\sum_{j=1}^{k_2}\left(\sum_{j_1+j_2+j_3\atop =m-1} T_{j_1}S_{j_2}^{(j)}S_{j_3}^{(j)}\right)2c_j^2  -(-2)^{m-2}\sum_{j=1}^{k_2}\left(\sum_{j_1+j_2+j_3\atop =m-2} T_{j_1}S_{j_2}^{(j)}S_{j_3}^{(j)}\right)4c_j^3.
\end{align*}
The resulting equality  holds also if $k_1=0$ or $k_2=0$. In the latter case, we have $S_{j_2}^{(l)}=c_j=d_j=0$ and an empty sum in \eqref{251116a} equals 0.
For the terms of the form $c_l^{r}d_l^{2}$ in Eq.~\eqref{070416*}, we can make a similar calculation. For the calculation below, we suppose that $k_2\ne 0$.
\begin{align*}
&\sum_{j=1}^{k_2} \left[ (-2)^{m-1} \left( \sum_{j_1+j_2+j_3\atop = m-1}T_{j_1}S_{j_2}S_{j_3} \right) d_j^2 + (-2)^{m-2} \left( \sum_{j_1+j_2+j_3\atop =m-2} T_{j_1}S_{j_2}S_{j_3}  \right) 3 d_j^2c_j \right.\\
&\rule{5mm}{0mm}+ \left.\ldots + (-2)^0 \left(\sum_{j_1+j_2+j_3\atop = 0} T_{j_1}S_{j_2} S_{j_3} \right)2^{m-2}(m+1)d_j^2c_j^{m-1}\right]\\
&=\sum_{j=1}^{k_2} 2^{m-1}(-1)^{m-1} \left(\sum_{j_1+j_2+j_3\atop=m-1}T_{j_1}S_{j_2}^{(j)}S_{j_3}^{(j)}\right) d_j^2 \\
&\rule{5mm}{0mm}- \sum_{j=1}^{k_2} 2^{m-2} (-1)^{m-2}  
\left(\sum_{j_1+j_2+j_3\atop=m-2}T_{j_1}S_{j_2}^{(j)}S_{j_3}^{(j)}\right)
  c_j d_j^2 \\
  &\rule{5mm}{0mm}+ 2^{m-2}\sum_{j=1}^{k_2} 
\left(\sum_{j_1+j_2+j_3\atop=-1}T_{j_1}S_{j_2}^{(j)}S_{j_3}^{(j)}\right)  
  (m+2) c_j^m d_j^2+ \sum_{j=1}^{k_2} \sum_{l=2}^{m-1} \left[ (-1)^{m-l+1}l c_j^l d_j^2  \right.\\
  &\rule{5mm}{0mm}+ 2(l+1)(-1)^{m-l}c_j^l d_j^2 + \left.(l+2)(-1)^{m-l-1}c_j^l d_j^2 \right]\, 
\left(\sum_{j_1+j_2+j_3\atop =m-l-1}T_{j_1}S_{j_2}^{(j)}S_{j_3}^{(j)}\right) \, 2^{m-2} \\
&=(-2)^{m-1}  \sum_{j=1}^{k_2} 
\left(\sum_{j_1+j_2+j_3\atop=m-1}T_{j_1}S_{j_2}^{(j)}S_{j_3}^{(j)}\right)
d_j^2 - (-2)^{m-2}\sum_{j=1}^{k_2} 
\left(\sum_{j_1+j_2+j_3\atop=m-2}T_{j_1}S_{j_2}^{(j)}S_{j_3}^{(j)}\right)
 c_jd_j^2.
\end{align*}

The resulting equality holds also if $k_1=0$ or $k_2=0$.
We have finally for the term in $x^m$ and $1<m<k+1$ on the left-hand side of Eq.~\eqref{070416*}:
\begin{align}
&  2^{k+1} (ix)^{m}\left(
\frac{P^{(k+1-m+2)}(0)}{(k+1-m+2)! 2^{k-m+2}} \E[\Gamma_1(X)] + \ldots + \frac{P^{(k+2)}(0)}{(k+2)!2^{k+1}}\E[\Gamma_m(X)]\right)\nonumber \\
&= (ix)^m (-2)^{m-1} \sum_{j=1}^{k_2} \left(
\sum_{j_1+j_2+j_3\atop=m-1}T_{j_1}S_{j_2}^{(j)}S_{j_3}^{(j)} \right) (2c_j^2+d_j^2 )\nonumber \\
&\rule{5mm}{0mm}+ (ix)^m  (-2)^{m-2} \sum_{j=1}^{k_2} \left(
\sum_{j_1+j_2+j_3\atop=m-2}T_{j_1}S_{j_2}^{(j)}S_{j_3}^{(j)} \right) (-4c_j^3-c_jd_j^2)\nonumber \\
&\rule{5mm}{0mm}+  (ix)^m (-2)^{m-1}  \sum_{j=1}^{k_1} 
\left(
\sum_{j_1+j_2+j_3\atop=m-1}T_{j_1}^{(j)}S_{j_2}S_{j_3}  \right)2b_j^2 \nonumber \\
&\rule{5mm}{0mm}+ (ix)^m(-2)^{m-1} a^2 \left(
\sum_{j_1+j_2+j_3\atop=m-1}T_{j_1}S_{j_2}S_{j_3} \right) . \label{030416a}
\end{align}
Consider now the term in $x^m$  for $1<m<k+1$ on the right-hand side of Eq.~\eqref{040416d}. Define:
\[
Q_m\left( \sum_{i=0}^\infty c_i x^i \right) = c_m x^m.
\] 
$Q_m$ is thus the projection of a series (or a polynomial) on its term of degree $m$. The terms in $x^m$ for $1<m<k+1$ on the right-hand side of Eq.~\eqref{040416d} are given by:
\begin{align}
& ixa^2 Q_{m-1}(G_1(x)G_2(x))+  2ix Q_{m-1}\left(G_2(x)\sum_{l=1}^{k_1}b_l^2 \prod_{j\ne l}(1-2ixb_j)\right)\nonumber \\
&\rule{5mm}{0mm}+ ix Q_{m-1}\left( G_1(x) \sum_{l=1}^{k_2} \Delta_l (1-2ixc_l) \prod_{j\ne l}(1-2ixc_j)^2\right)\nonumber \\
&\rule{5mm}{0mm}+  (ix)^2 Q_{m-2}\left( G_1(x) \sum_{l=1}^{k_2} c_l \Delta_l  \prod_{j\ne l} (1-2ixc_j)^2\right)\nonumber \\
&\rule{5mm}{0mm}-2  ix Q_{m-1} \left( G_1(x) \sum_{l=1}^{k_2}c_l^2 \prod_{j\ne l} (1-2ixc_j)^2 \right).\label{060416a}
\end{align}
We calculate now the projections appearing in the expression above.
\begin{align*}
&G_1(x)G_2(x)= \sum_{j_1=0}^{k_1}(-2ix)^{j_1}T_{j_1} \sum_{j_2=0}^{k_2}(-2ix)^{j_2}S_{j_2} \sum_{j_3=0}^{k_2} (-2ix)^{j_3} S_{j_3},\\
&Q_{m-1}(G_1(x)G_2(x))= (-2ix)^{m-1} \left(\sum_{j_1+j_2+j_3\atop=m-1}T_{j_1}S_{j_2}S_{j_3}\right),\\
&G_2(x) \sum_{l=1}^{k_1} b_l^2 \prod_{j\ne l}(1-2ixb_l) = \sum_{l=1}^{k_1} b_l^2  \sum_{j_1=0}^{k_1-1} (-2ix)^{j_1} T_{j_1}^{(l)} \sum_{j_2=0}^{k_2}(-2ix)^{j_2}S_{j_2} \sum_{j_3=0}^{k_2}(-2ix)^{j_3} S_{j_3},\\
&Q_{m-1}\left( G_2(x) \sum_{l=1}^{k_1} b_l^2 \prod_{j\ne l}(1-2ixb_l)  \right)= (-2ix)^{m-1}\sum_{l=1}^{k_1} b_l^2  \left( \sum_{j_1+j_2+j_3\atop=m-1} T_{j_1}^{(l)} S_{j_2}S_{j_3} \right)  .
\end{align*}
Notice that these equalities    hold if $k_1\ge 0$ and $k_2\ge 0$. The following equalities can be derived as above:
\begin{align*}
&Q_{m-1}\left( G_1(x)\sum_{l=1}^{k_2}\Delta_l (1-2ixc_l) \prod_{j\ne l} (1-2ixc_j)^2 \right)\\
&= (-2ix)^{m-1}\left[ \sum_{l=1}^{k_2} \Delta_l
\left(
\sum_{j_1+j_2+j_3\atop =m-1} T_{j_1}S_{j_2}^{(l)}S_{j_3}^{(l)} \right) +   \sum_{l=1}^{k_2}c_l \Delta_l \left( \sum_{j_1+j_2+j_3\atop=m-2}T_{j_1} S_{j_2}^{(l)} S_{j_3}^{(l)} \right)\right], \\
&Q_{m-2}\left( G_1(x) \sum_{l=1}^{k_2} c_l\Delta_l \prod_{j\ne l}(1-2ixc_j)^2\right) = (-2ix)^{m-2}\sum_{l=1}^{k_2}c_l\Delta_l \left(
\sum_{j_1+j_2+j_3\atop=m-2} T_{j_1}S_{j_2}^{(l)}S_{j_3}^{(l)} \right),\\
&Q_{m-1}\left( G_1(x)\sum_{l=1}^{k_2}c_l^2 \prod_{j\ne l}(1-2ixc_j)^2 \right) = (-2ix)^{m-1}\sum_{l=1}^{k_2} c_l^2 \left( 
\sum_{j_1+j_2+j_3\atop=m-1} T_{j_1} S_{j_2}^{(l)}S_{j_3}^{(l)}
\right).
\end{align*}
%
%
We have thus for   expression \eqref{060416a}:
\begin{align}
&  ix a^2 (-2ix)^{m-1}\left( \sum_{j_1+j_2+j_3\atop=m-1}T_{j_1}S_{j_2}S_{j_2} \right) + 2  ix(-2ix)^{m-1}\sum_{l=1}^{k_1}b_l^2 \left( \sum_{j_1+j_2+j_3\atop=m-1} T_{j_1}^{(l)}S_{j_2}S_{j_3} \right)\nonumber \\
&\rule{5mm}{0mm}+   ix(-2ix)^{m-1}\sum_{l=1}^{k_2} \Delta_l \left( \sum_{j_1+j_2+j_3\atop=m-1} T_{j_1}S_{j_2}^{(l)}S_{j_3}^{(l)}\right) \nonumber \\
&\rule{5mm}{0mm}+  ix(-2ix)^{m-1} \sum_{l=1}^{k_2}c_l\Delta_l  \left( \sum_{j_1+j_2+j_3\atop=m-2}T_{j_1}S_{j_2}^{(l)}S_{j_3}^{(l)} \right)\nonumber\\
&\rule{5mm}{0mm}+ (ix)^2(-2ix)^{m-2}\sum_{l=1}^{k_2}c_l\Delta_l\left( \sum_{j_1+j_2+j_3\atop=m-2}T_{j_1}S_{j_2}^{(l)}S_{j_3}^{(l)} \right)\nonumber \\
&\rule{5mm}{0mm}-2  ix(-2ix)^{m-1}\sum_{l=1}^{k_2}c_l^2 \left( \sum_{j_1+j_2+j_3\atop=m-1} T_{j_1}S_{j_2}^{(l)}S_{j_3}^{(l)} \right)\nonumber \\
&=  (ix)^m \left\lbrace(-2)^{m-1}\left( \sum_{j_1+j_2+j_3\atop=m-1}T_{j_1}S_{j_2}S_{j_3}\right)a^2+2   (-2)^{m-1} \sum_{l=1}^{k_1}\left( \sum_{j_1+j_2+j_3\atop=m-1}T_{j_1}^{(l)}S_{j_2}S_{j_3}\right)b_l^2\right.\nonumber\\
&\rule{5mm}{0mm}+ \left.(-2)^{m-1}  \sum_{l=1}^{k_2}(2c_l^2+d_l^2) \left( \sum_{j_1+j_2+j_3\atop=m-1}T_{j_1}S_{j_2}^{(l)}S_{j_3}^{(l)}\right) \right.\nonumber \\
&\rule{5mm}{0mm}- \left.  (-2)^{m-2} \sum_{l=1}^{k_2}c_l\Delta_l \left( \sum_{j_1+j_2+j_3\atop=m-2}T_{j_1}S_{j_2}^{(l)}S_{j_3}^{(l)}\right)\right\rbrace.\label{030416b}
\end{align}
This equality  holds (again) for $k_1\ge 0$ and $k_2\ge 0$.
Comparing Eq.~\eqref{030416a} and Eq.~\eqref{030416b}, we find the desired equality of the terms in $x^m$ on the left-hand and the right-hand side of Eq.~\eqref{040416d}.
\end{proof}

\section*{Acknowledgements} I would like to thank two anonymous
referees for their comments and remarks, as well as 
Prof. Dr. G.
Peccati and Prof. Dr. A. Thalmaier for their time and many interesting
discussions and suggestions.

  \bibliographystyle{plain}
\bibliography{references_MR_14}

\end{document}